\newtheorem{theo}{Theorem}[section]
\newtheorem{coro}[theo]{Corollary}
\newtheorem{lemm}[theo]{Lemma}
\newtheorem{defi}[theo]{Definition}
\numberwithin{equation}{section}
\begin{document}

\title[Well-posedness for ``good" Boussinesq equations]{Well-posedness for``good" Boussinesq equations subject to quasi-periodic initial data}


\author{Yixian Gao}
\address{School of
Mathematics and Statistics, Center for Mathematics and
Interdisciplinary Sciences, Northeast Normal University, Changchun, Jilin 130024, P.R.China}
\email{gaoyx643@nenu.edu.cn}

\author{Yong  Li}
\address{School of Mathematics, Jilin University, Changchun, Jilin 130012;
School of Mathematics and Statistics, Center for Mathematics and
Interdisciplinary Sciences, Northeast Normal University, Changchun, Jilin
130024, P.R.China}
\email{yongli@nenu.edu.cn}

\author{Chang Su}
\address{School of
Mathematics and Statistics, Center for Mathematics and
Interdisciplinary Sciences, Northeast Normal University, Changchun, Jilin 130024, P.R.China}
\email{suc618@nenu.edu.cn}

\thanks{
 The research of YG was supported in part by NSFC grant 11871140,
JJKH20180006KJ and FRFCU2412019BJ005. The research of YL was supported in part by NSFC grant 11571065.
}
\keywords{Boussinesq equations; Quasi-periodic initial data; Well-posedness; Exponential decay}

\begin{abstract}
This paper concerns the  local well-posedness for the ``good" Boussinesq equation  subject to quasi-periodic initial conditions.
By constructing a delicately and subtly iterative process together with an explicit combinatorial analysis,
 we show that there exists a unique solution for such a model in a small region of time. The size of this region depends on both the given data and the frequency vector involved. Moreover the local solution has an expansion with exponentially decaying Fourier coefficients.
\end{abstract}
\maketitle

\section{Introduction}
The aim of this paper is  to investigate the existence and uniqueness for the ``good" Boussinesq equation
 \begin{align}\label{ne}
u_{tt}+ u_{xxxx} - u_{xx} - (u^{2})_{xx}=0,  \quad x\in \mathbb{R}, t>0
\end{align}
with respect to quasi-periodic initial data
\begin{align}\label{seq}
&u(0,x)=u_{0}(x)=\sum_{\boldsymbol n\in
\mathbb{Z}^{\nu}}c(0,\boldsymbol n)\exp({\rm i}x\boldsymbol n\cdot\boldsymbol \omega):=\sum_{\boldsymbol n\in
\mathbb{Z}^{\nu}}c(\boldsymbol n)\exp({\rm i}x\boldsymbol n\cdot\boldsymbol \omega), \\
&\partial_{t}u(0,x)=u_{1}(x)=\sum_{\boldsymbol n\in
\mathbb{Z}^{\nu}}\partial_{t}c(0,\boldsymbol n)\exp({\rm i}x\boldsymbol n\cdot\boldsymbol \omega):=\sum_{\boldsymbol n\in
\mathbb{Z}^{\nu}}c'(\boldsymbol n)\exp({\rm i}x\boldsymbol n\cdot\boldsymbol \omega),\label{seq2}
\end{align}
where
\begin{align*}
\boldsymbol n=(n_{1},\cdots, n_{\nu})\in\mathbb{Z}^{\nu},\quad\boldsymbol\omega=(\omega_{1},\cdots, \omega_{\nu})\in\mathbb{R}^{\nu},\quad\boldsymbol n\cdot\boldsymbol\omega=\sum_{j=1}^\nu n_{j}\omega_{j}.
\end{align*}
Equation \eqref{ne}
governs small nonlinear oscillations in an elastic beam and is also known as the ``nonlinear
string equation" (see \cite{falkovich1983}).

When investigating the bidirectional propagation of small amplitude and long wavelength
capillary-gravity waves on the surface of shallow water, in 1872 Boussinesq \cite{J1872} gave the classical Boussinesq equation

\begin{align}\label{boussinesq}
v_{tt} -gh_0 v_{xx}=gh_0\left(\frac{3}{2} \frac{v^2}{h_0}+ \frac{h_0^2}{3}v_{xx}\right)_{xx}, \quad x\in \mathbb{R}, t>0,
\end{align}
where $v(t, x)$  ) is the perturbation of free surface, $h_0$ is the mean depth, and $g$ is the gravitational
constant. In nondimensional units,
 equation \eqref{boussinesq} can be reduced to 
\begin{align}\label{seq1}
u_{tt} - u_{xxxx} - u_{xx}-(u^{2})_{xx}=0,\quad x\in \mathbb{R}, t>0,
\end{align}
which is called  ``bad" Boussinesq equation. This was the first mathematical model for the phenomenon of solitary waves observed by Scott--Russell  \cite{russel1845report}.
It admits special, travelling-wave solutions
\begin{align*}
u(x,t)=\frac{2}{3}(c^{2}-1){\rm sech}^{2}\left(\frac{\sqrt{c^{2}-1}}{2}(x-ct)\right),
\end{align*}
where the constant $c$ stands for velocity of the wave. Such solutions are also called solitary waves.
 However the ``bad" Boussinesq  equation \eqref{seq1} is ill-posed because of the exponential growth of the Fourier components.
 In order to investigate the initial value problem, 
 Deift et al. \cite{P1982}  imposed   exponentially decaying of the initial functions and applied the techniques of inverse scattering theory to the following Boussinesq equation
\begin{align*}
u_{tt}-3u_{xxxx}+12(u^{2})_{xx}=0.
\end{align*}
The other way to solve the ill-posed problems is that  we can change the sign of  the fourth order derivative term in equation \eqref{seq1} from negative to positive, i. e.,  the $u_{tt}$ and $u_{xxxx}$ terms have the same sign, which is called  ``good” Boussinesq equation and have linearly well-posed.
  The ``good” Boussinesq equation was suggested by Zakharov \cite{zakharov1974} as a model
of nonlinear vibrations along a string, and also by Turitsyn \cite{turisyn1993} for describing electromagnetic
waves in nonlinear dielectric materials.

The local well-posedness of the Cauchy problem for the ``good" Boussinesq equation \eqref{ne} has a  relatively recent history.
Bona and Sachs \cite{BLSL1988} considered the following Cauchy problem associated with Boussinesq equations
\begin{align}\label{cauchy}
\begin{cases}
u_{tt}+u_{xxxx}- u_{xx}+(f(u))_{xx}=0,\\
u(0,x)=u_{0}(x),\quad \partial_{t}u(0,x)=u_{1}(x).
\end{cases}
\end{align}
By using Kato's abstract theory for quasi-linear evolution equation, they concluded local well-posedness with respect to initial data $(u_{0},u_{1})\in H^{s}(\mathbb R)\times H^{s-2}(\mathbb R)$ for $s>\frac{5}{2}$.
At the same time, they also showed that the solution with initial data  close to isolated wave ones is orbital stable and always exists.
 Tsutsumi and Matahashi \cite{MT1991} established local and global well-posedness of  the Cauchy problem \eqref{cauchy}
with $(u_{0},u_1)\in H^{1}(\mathbb R)\times H^{-1}(\mathbb R)$.
Linares \cite{FL1993} further applied Strichartz type estimates to investigate local well-posedness of the Cauchy problem \eqref{cauchy} when initial data $(u_0,u_1)\in L^{2}(\mathbb R)\times {H}^{-1}(\mathbb R)$.
Farah \cite{FL2009} improved the local well-posedness results above by proving that the Cauchy problem \eqref{cauchy} is locally well-posed when $(u_{0},u_1)$ belong to $H^{s}(\mathbb R)\times H^{s-1}(\mathbb R)$ with $s>-\frac{1}{4}$. The main proof is based on defining suitable Bourgain type spaces to the linear part of the equation, and using them to derive the appropriate bilinear estimates. Moreover, Constantin and Molinet \cite{C2002} demonstrated the existence and uniqueness of local solutions of the generalized Boussinesq equation for initial data of low regularity. While they also discussed the existence of global solutions and the occurrence of blow-up phenomena.  Taniuchi \cite{TY2006} showed that a two-dimensional Boussinesq equation with non-decaying initial data admits a unique global solution on the whole plane.  In addition, we refer the readers to the articles \cite{CT2017,HM2015,KN2013,li2019well} for well-posedness associated with the ``good" Boussinesq equation.

In recent years there has been extensive interest in  nonlinear partial differential equations with respect to either periodic or quasi-periodic or  almost periodic initial data. Venakides \cite{VS1987} calculated  weak limit of solutions of the following KdV equation
\begin{align*}
u_ {t}+\epsilon ^ {2} u_ {xxx}-6 uu_ {x}=0
\end{align*}
for the periodic initial value if  $\epsilon$ tends to $0$.  In the neighborhood of a point $(x,t)$, he obtained that the solution $u(x,t,\epsilon)$ could be approximated either by a constant or a periodic or a quasi-periodic solution for such a model. Tadahiro \cite{OT2015,OTa2015}, respectively, studied the Cauchy problem of a class of nonlinear Schr\"{o}dinger equations with the limit periodic function and almost periodic function as initial value. For the almost periodic case, under a set of frequencies $\boldsymbol\omega=\{\omega_{j}\}_{j=1}^{\infty}$,  he presented that the corresponding Cauchy problem was locally well-posed in the algebras $\mathcal {A}_{\boldsymbol\omega}(\mathbb{R})$ consisted of almost periodic functions with absolutely convergent Fourier series. Moreover, he also provided the first example of blasting solutions for such a model with general almost periodic initial values in finite time.
In addition, Tsugawa \cite{TK2012} gave  well-posedness of the KdV equation with quasi-periodic initial value by using the Fourier restriction norm method introduced by Bourgain. Recently, provided Diophantine conditions and an exponential decay assumption on the generalized Fourier components, Damanik and Goldstein \cite{DDGM2016} constructed local and global solutions to the KdV equation corresponding to quasi-periodic initial data.

 Let us review the achievements related to the ``good'' Boussinesq equation subject to periodic initial data. In spite of the ``good'' Boussinesq equation \eqref{ne} has the Lax pair formula and is linear stable, Kalantarov and Ladyzhenskaya proved \cite{kalantarov1978occurrence} that in the periodic case and Dirichlet boundary case solutions may blow-up in a finite time.  Given minimal
regularity assumptions on periodic initial data, Fang and Grillakis \cite{FG1996} established local and global existence results  (use the conservation of energy )  for  the Cauchy problem \eqref{cauchy} by using Fourier series and a fixed point argument. Later, Oh and Stefanov \cite{OS2013} considered local well-posedness of the Cauchy problem \eqref{cauchy} with periodic initial data and $f(u)=u^p$. They reduced the Sobolev index to $s>-\frac{3}{8}$. Recently,  Barostichi \cite{BFH2019} also studied local well-posedness for initial data in Gevrey spaces on the circle.  Different with the case in  \cite{FG1996} and \cite{BFH2019}, the energy is  indefinite in our case and the solution may be blow up.
To the best of our knowledge, there are few results on well-posedness for the ``good'' Boussinesq equation under quasi-periodic initial data. In this work, we intend to prove the existence and uniqueness  for  ``good" Boussinesq equation with the quasi-period initial data.

More precisely, we have the following theorem.
\begin{theo}\label{th1}
Let $\boldsymbol\omega\in\mathbb{R}^{\nu}$.  Suppose that $\boldsymbol n\cdot\boldsymbol\omega\neq0$ for every $\boldsymbol n\neq0$, and  Fourier coefficients associated with initial data \eqref{seq}--\eqref{seq2} satisfy
\begin{align*}
|c(\boldsymbol n)| \leq  B\exp(-\frac{\kappa|\boldsymbol n|}{2}), \quad|c'(\boldsymbol n)|\leq B|\boldsymbol \omega|\exp(-\frac{\kappa|\boldsymbol n|}{2})
\end{align*}
for two positive constants $B, \kappa$.
Then there exists $t_{0}>0$ such that for $0\leq t<t_{0}$, $x\in\mathbb{R}$, one can construct a function
\begin{align*}
u(t,x)=\sum_{\boldsymbol n\in \mathbb{Z}^{\nu}}c(t,\boldsymbol n)\exp({\rm i}x\boldsymbol n\cdot\boldsymbol \omega),
\end{align*}
which satisfies equation \eqref{ne} with respect to initial conditions \eqref{seq}--\eqref{seq2}. Moreover,
\begin{align*}
c(t,\boldsymbol n)=&\frac{1}{2}c(\boldsymbol n)\left(\exp\left( {\rm i}t\lambda\right)+\exp \left(-{\rm i}t\lambda\right)\right)
-\frac{{\rm i}}{2\lambda}c'(\boldsymbol n)\left(\exp\left( {\rm i}t\lambda\right)-\exp \left(-{\rm i}t\lambda\right)\right)\nonumber\\
&-\frac{\rm i}{\lambda}
\int_{0}^{t}\exp\left({\rm i}(\tau-t)\lambda\right)
-\exp\left({\rm i}(t-\tau)\lambda\right)\sum_{\boldsymbol m\in \mathbb{Z}^{\nu}}(\boldsymbol m\cdot\boldsymbol\omega)(\boldsymbol n\cdot\boldsymbol\omega)c(t,\boldsymbol m)c(t,\boldsymbol n-\boldsymbol m){\rm d}\tau
\end{align*}
with $\lambda=\left((\boldsymbol n\cdot\boldsymbol\omega)^{2}+(\boldsymbol n\cdot\boldsymbol\omega)^{4}\right)^{\frac{1}{2}}$, and
\begin{align*}
|c(t,\boldsymbol n)|\leq2B\exp(-\frac{\kappa|\boldsymbol n|}{4}).
\end{align*}

Furthermore, if the function
\begin{align*}
v(t,x)=\sum_{\boldsymbol n\in\mathbb{Z}^{\nu}}h(t,\boldsymbol n)\exp({\rm i}x\boldsymbol n\cdot\boldsymbol \omega)
\end{align*}
 is also a solution of equation \eqref{ne} with initial conditions  \eqref{seq}--\eqref{seq2} satisfying that for some positive constants $C_{1},\rho $,
\begin{align*}
|h(t,\boldsymbol n)|\leq C_{1}\exp(-\rho|\boldsymbol n|),
\end{align*}
then there exists $t_{1}>0$ such that $v(t,x)=u(t,x)$  for $0\leq t\leq t_{1}$, $x\in\mathbb{R}$.
\end{theo}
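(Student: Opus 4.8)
The plan is to convert the PDE into an infinite system of second order ODEs for the Fourier coefficients, recast that system as a single nonlinear integral equation reproducing the stated formula, and then solve the latter by a Picard-type iteration whose terms are indexed by rooted binary trees. First I would substitute the ansatz $u(t,x)=\sum_{\boldsymbol n}c(t,\boldsymbol n)\exp(\mathrm ix\boldsymbol n\cdot\boldsymbol\omega)$ into \eqref{ne}. Writing $\theta_{\boldsymbol n}:=\boldsymbol n\cdot\boldsymbol\omega$ and comparing the coefficients of $\exp(\mathrm ix\theta_{\boldsymbol n})$, the $x$-derivatives turn into powers of $\mathrm i\theta_{\boldsymbol n}$ and the quadratic term $(u^{2})_{xx}$ becomes a discrete convolution, so that each coefficient obeys
\[
\frac{d^{2}}{dt^{2}}c(t,\boldsymbol n)+\lambda^{2}c(t,\boldsymbol n)=-\theta_{\boldsymbol n}^{2}\sum_{\boldsymbol m}c(t,\boldsymbol m)c(t,\boldsymbol n-\boldsymbol m),\qquad \lambda^{2}=\theta_{\boldsymbol n}^{2}+\theta_{\boldsymbol n}^{4}.
\]
Solving this by variation of parameters with the prescribed data $c(0,\boldsymbol n)=c(\boldsymbol n)$, $\partial_{t}c(0,\boldsymbol n)=c'(\boldsymbol n)$ --- the homogeneous part contributing $\tfrac12 c(\boldsymbol n)(e^{\mathrm it\lambda}+e^{-\mathrm it\lambda})-\tfrac{\mathrm i}{2\lambda}c'(\boldsymbol n)(e^{\mathrm it\lambda}-e^{-\mathrm it\lambda})$ and Duhamel's formula with kernel $\sin(\lambda(t-\tau))/\lambda$ producing the integral term --- yields exactly the fixed-point identity in the statement, the symmetrization $\sum_{\boldsymbol m}\theta_{\boldsymbol m}c(\boldsymbol m)c(\boldsymbol n-\boldsymbol m)=\tfrac12\theta_{\boldsymbol n}\sum_{\boldsymbol m}c(\boldsymbol m)c(\boldsymbol n-\boldsymbol m)$ accounting for the $(\boldsymbol m\cdot\boldsymbol\omega)(\boldsymbol n\cdot\boldsymbol\omega)$ weight. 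Constructing a solution is thus equivalent to finding a fixed point of the map $\Phi$ given by the right-hand side.

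The heart of the argument is the iteration $c_{0}:=$ (homogeneous part), $c_{k+1}:=\Phi(c_{k})$, together with three elementary but decisive estimates: the multiplier bound $\theta_{\boldsymbol n}^{2}/\lambda\le1$, which absorbs the two derivatives lost in the nonlinearity; the bound $|\sin(\lambda(t-\tau))/\lambda|\le|t-\tau|\le t$, so that the possibly tiny $\lambda$ never appears in a denominator and each integration gains a factor of $t$ (this is why no Diophantine condition is needed, only $\theta_{\boldsymbol n}\neq0$); and the convolution estimate
\[
\sum_{\boldsymbol m}e^{-\frac{\kappa}{2}|\boldsymbol m|}e^{-\frac{\kappa}{2}|\boldsymbol n-\boldsymbol m|}\le e^{-\frac{\kappa}{4}|\boldsymbol n|}\sum_{\boldsymbol m}e^{-\frac{\kappa}{4}|\boldsymbol m|}=:S\,e^{-\frac{\kappa}{4}|\boldsymbol n|},
\]
which is precisely where the drop from the data rate $\kappa/2$ to the solution rate $\kappa/4$ is spent: splitting each exponent in half and using $|\boldsymbol m|+|\boldsymbol n-\boldsymbol m|\ge|\boldsymbol n|$ extracts the factor $e^{-\kappa|\boldsymbol n|/4}$ while the remaining half keeps the sum over $\boldsymbol m$ finite. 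Expanding the iteration generates a sum indexed by rooted binary trees whose $k$ leaves each carry a copy of the data-rate bound $Be^{-\kappa|\boldsymbol n|/2}$; I would prove by induction on the tree that every such term is dominated by $e^{-\kappa|\boldsymbol n|/4}$ times a constant of the form $t^{\,k-1}(2B)^{k}S^{\,k-1}$, the reserve rate $\kappa/4$ surviving at each internal node precisely because the leaves begin at the higher rate $\kappa/2$. Summing over all trees with $k$ leaves introduces the Catalan number $\mathrm{Cat}_{k-1}\le4^{k}$, so the full series is majorized by a geometric-type series $\sum_{k}(CBS)^{k}t^{\,k-1}e^{-\kappa|\boldsymbol n|/4}$; choosing $t_{0}=t_{0}(B,\kappa,\boldsymbol\omega)$ small makes it converge with sum at most $2Be^{-\kappa|\boldsymbol n|/4}$, which is the asserted bound and also exhibits the limit as a genuine solution of \eqref{ne}. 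The dependence of $t_{0}$ on $\boldsymbol\omega$ enters only through the homogeneous term, where $|c'(\boldsymbol n)|\le B|\boldsymbol\omega|e^{-\kappa|\boldsymbol n|/2}$ is multiplied by $t$. I expect this combinatorial summation --- making the per-tree bound uniform in the tree shape and verifying that the rate $\kappa/4$ is never degraded below the target --- to be the main obstacle.

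For uniqueness I would let $v$ be a second solution with $|h(t,\boldsymbol n)|\le C_{1}e^{-\rho|\boldsymbol n|}$ and subtract the two integral equations. Since $c*c-h*h=c*(c-h)+(c-h)*h$, the difference $d:=c-h$ satisfies a linear integral inequality driven by $c$ and $h$. Measuring $d$ in the weighted norm $\|d(t)\|_{r}:=\sum_{\boldsymbol n}|d(t,\boldsymbol n)|e^{r|\boldsymbol n|}$ with $r<\min\{\kappa/4,\rho\}$ --- under which discrete convolution is submultiplicative, so no rate is lost --- and using that $c(t,\cdot)$ and $h(t,\cdot)$ are bounded in this norm on $[0,t_{1}]$, the multiplier bound $\theta_{\boldsymbol n}^{2}/\lambda\le1$ and the kernel bound give $\|d(t)\|_{r}\le K\int_{0}^{t}\|d(\tau)\|_{r}\,d\tau$. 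As $d(0)=0$ by the common initial data, Gronwall's inequality forces $d\equiv0$, hence $v=u$, on $[0,t_{1}]$.
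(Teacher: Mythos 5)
Your proposal follows the same overall route as the paper: reduce to the second--order ODE system for the Fourier coefficients, invert by variation of parameters to obtain exactly the stated fixed--point identity, run the Picard iteration, expand it into tree--indexed terms whose leaves carry the initial data, and pay for the drop from the data rate $\kappa/2$ to the solution rate $\kappa/4$ by a one--time splitting of exponents. Within that common frame, however, you make two technical choices that genuinely diverge from the paper and simplify it. First, you bound the complete multiplier by $\theta_{\boldsymbol n}^{2}/\lambda=|\theta_{\boldsymbol n}|/\sqrt{1+\theta_{\boldsymbol n}^{2}}\leq 1$, uniformly in $\boldsymbol n$; the paper instead bounds its multiplier $f$ by $|\boldsymbol\omega||\mu(\boldsymbol m^{(k)})|$ (Lemma \ref{lem2.5}), a polynomially growing quantity, and is then forced into the entire multi--index/factorial apparatus (Lemmas \ref{lem2.8}, \ref{lem2.10}, \ref{L6}, \ref{L9}, Stirling) to beat the accumulated powers of $|\boldsymbol n|$; with your bound all of that evaporates and only geometric tree counting (Catalan numbers) remains. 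Your kernel bound $|\sin(\lambda t)/\lambda|\leq t$ on the $c'(\boldsymbol n)$ term is also the sound way to handle small $|\boldsymbol n\cdot\boldsymbol\omega|$: the paper's corresponding inequality $1/(|\boldsymbol n\cdot\boldsymbol \omega|\sqrt{1+(\boldsymbol n\cdot\boldsymbol \omega)^{2}})\leq|\boldsymbol\omega|^{-1}$ in Lemma \ref{lem2.5} fails whenever $|\boldsymbol n\cdot\boldsymbol\omega|$ is small, which does occur for $\nu\geq2$, so your version is not just simpler but correct where the paper's bound is not. Second, your uniqueness argument (weighted $\ell^{1}$ norm, submultiplicativity under convolution, Gronwall) is shorter than the paper's Corollary \ref{C6}, which reruns the iterative induction with factorial sums and Stirling, and it gives $u=v$ on the whole common interval rather than on a possibly smaller $t_{1}$.

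One step, as literally written, would fail, and it is exactly the difficulty the paper's heavy machinery exists to handle. Your induction claims that "every tree term is dominated by $e^{-\kappa|\boldsymbol n|/4}$ times $t^{k-1}(2B)^{k}S^{k-1}$, the reserve rate $\kappa/4$ surviving at each internal node." If both children of an internal node are nontrivial subtrees known only at the rate $\kappa/4$, the convolution at that node yields
\begin{align*}
\sum_{\boldsymbol m}e^{-\frac{\kappa}{4}|\boldsymbol m|}\,e^{-\frac{\kappa}{4}|\boldsymbol n-\boldsymbol m|}
\leq e^{-\frac{\kappa}{8}|\boldsymbol n|}\sum_{\boldsymbol m}e^{-\frac{\kappa}{8}|\boldsymbol m|},
\end{align*}
i.e.\ the rate halves again (alternatively, keeping the rate $\kappa/4$ costs a polynomial factor of order $(1+|\boldsymbol n|)^{\nu}$ per node, whose accumulation is precisely what drives the paper into factorial estimates). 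Node--by--node survival of the rate works only for caterpillar--shaped trees, where one child of every node is a leaf still carrying the higher rate $\kappa/2$; the Picard iteration produces all binary trees, including balanced ones. The repair is to strengthen the inductive hypothesis so that it retains the leaves: bound each tree term by a time factor times $\sum_{\boldsymbol m_{1}+\cdots+\boldsymbol m_{k}=\boldsymbol n}\prod_{j}Be^{-\kappa|\boldsymbol m_{j}|/2}$, and perform the split $\prod_{j}e^{-\kappa|\boldsymbol m_{j}|/2}\leq e^{-\kappa|\boldsymbol n|/4}\prod_{j}e^{-\kappa|\boldsymbol m_{j}|/4}$ exactly once, at the root, using $\sum_{j}|\boldsymbol m_{j}|\geq|\boldsymbol n|$. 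This is what the paper's sets $\mathfrak{M}^{(k,\boldsymbol\gamma)}$ and Lemma \ref{lem2.10} implement. Your own convolution display contains this mechanism, so the fix is a reformulation of the inductive statement rather than a new idea, but without it the inductive step does not close.
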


  Contrast with the global result for KdV equation in \cite{DDGM2016}, Damanik and Goldstein can apply the fundamental property  for the Schr\"{o}dinger operators (conservation of the spectrum) by Lax \cite{Lax1968}  to extend the local well-posedness to global well-posedness. The  Boussinesq  equation do not posses these properties.
  In fact, using the method of Levine\cite{Levine1974}, Kalantarov and Ladyzhenskaya \cite{kalantarov1978occurrence}  showed that for a large
 set of initial values there is no smooth solution of  equation \eqref{ne} for all time.
 This nonexistence is generally referred to blow-up rather than collapse, while the blow-up for good Boussinesq was proved by Sachs \cite{Sachs1990} in $H^{-1}$ for certain initial date (the energy is indefinite).

 The nonlinear interaction between high- and very-low-frequency parts of solutions make
the well-posedness problem difficult in the study of the Boussinesq equation.  To avoid this
difficulty, in the periodic date case, one can applies the  conservation
law:$\int_{\mathbb T} u {\rm d} x=c$ for any solution of the  Boussinesq equation. It is not enough
for the quasi-periodic case, while  the main difficulty with quasi-periodic
initial data is in the complicated nature of the conservation laws. Furthermore, the spectrum in the quasi-periodic case is known to typically have a dense set of gaps.
In our analysis, the major difficulty  is to keep the Fourier coefficients of local solutions exponential decay. In order to overcome this problem,  we apply
an explicit combinatorial analysis of the iteration of the integral transformation.

This paper is organized as follows.  Section \ref{sec:2} shows the exponential decay of Fourier coefficients of local solutions for the ``good"  Boussinesq equation. An integral transform is introduced to reduce the different equation for the Fourier coefficients into integral equations.  A Picard iteration sequence for the Fourier coefficient is constructed.
Due to the  complex representation of iterative terms, we introduce inductively tree  branches, and  attach an appropriate lattice to each tree branch for keeping the terms in iterative equations. Another step is to define some weights which enable us to develop estimation techniques for iterative equations.
Finally, we make a combination analysis of the explicit iteration of integral transformation. Although the derivation process is quite complicated, the exponential decay of the Fourier coefficient is in good agreement with the combined growth factor produced in the iterative process. There is no small denominator problems in the estimation as well. Therefore our derivation does not involve any Diophantine condition. The aim of Section \ref{sec:3} is to present that the Fourier coefficients of solutions for the ``good"  Boussinesq equation indeed exist and are unique. 
In Section \ref{sec:4}, we give the proof of  Theorem \ref{th1}. More precisely,  we prove the existence and uniqueness of local solutions for the ``good" Boussinesq equation with the quasi-period initial data.

%

Before ending this section, let us mention that  Binder et.al. \cite{binder2018almost} recently  investigate the Cauchy problem for the KdV equation with  almost periodic initial data and obtained the  existence, uniqueness, and almost periodicity in time of solutions. Their result can also apply to all small analytic quasi-periodic initial data with Diophantine frequency vector.  However, it is not clear whether it is valid for the general quasi-periodic initial date for the Boussinesq equation.



\section{Exponential decay of Fourier coefficients}\label{sec:2}


Suppose that the function
\begin{align}\label{ansatz}
u(t,x)=\sum_{\boldsymbol n\in
\mathbb{Z}^{\nu}}c(t,\boldsymbol n)\exp({\rm i}x\boldsymbol n\cdot\boldsymbol \omega)
\end{align}
is a solution of equation \eqref{ne} with respect to initial conditions \eqref{seq}--\eqref{seq2}. Meanwhile we assume that $(u^{2})_{xx}$ has the following expansion
\begin{align}\label{ansatz1}
(u^{2})_{xx}=\sum_{\boldsymbol n\in
\mathbb{Z}^{\nu}}A(t,\boldsymbol n)\exp({\rm i}x\boldsymbol n\cdot\boldsymbol \omega).
\end{align}
The main purpose of  this section is to establish the exponential decay of the Fourier coefficients $c(t,\boldsymbol n)$  under some assumptions.
Moreover we denote by $|\cdot|$ the $\ell^{1}$-norm on $\mathbb{R}^{\nu}$ as follows
\begin{align*}
|\boldsymbol y|=\sum^\nu_{j=1}|y_{j}|, \quad\boldsymbol y=(y_{1},\ldots,y_{\nu})\in\mathbb{R}^{\nu}.
\end{align*}


The following lemma gives the expressions of the Fourier coefficients $c(t,\boldsymbol n)$.

\begin{lemm}\label{cgwye}
For some constant $t_0>0$, let $c(t,\boldsymbol n)$, $A(t,\boldsymbol n)$ be continuous functions of  $t\in[0,t_{0})$, $\boldsymbol n\in
\mathbb{Z}^{\nu}$.  Assume that
\begin{align}\label{ctu}
\sup_{t}\sum_{\boldsymbol n\in
\mathbb{Z}^{\nu}}(1+|\boldsymbol n|^{2}+|\boldsymbol n|^{4})(|c(t,\boldsymbol n)|+|A(t,\boldsymbol n)|)<\infty.
\end{align}
Then the Fourier coefficients $c(t,\boldsymbol n)$ associated with the ansatz \eqref{ansatz} can be expressed as the following integral forms
\begin{align}\label{cu}
c(t,\boldsymbol n)=\left(\frac{1}{2}c(\boldsymbol n)-\frac{\rm i}{2\lambda}c'(\boldsymbol n)\right)\exp({\rm i}\lambda t)+\left(\frac{1}{2}c(\boldsymbol n)+\frac{\rm i}{2\lambda}c'(\boldsymbol n)\right)\exp(-{\rm i}\lambda t) \nonumber\\
+\int_{0}^{t}\frac{\exp \left({\rm i}\lambda(\tau-t)\right)-\exp\left( {\rm i}\lambda(t-\tau)\right)}{-2\rm\lambda i}A(\tau,\boldsymbol n){\rm d}\tau,
\end{align}
where $\lambda=((\boldsymbol n\cdot\boldsymbol \omega)^{2}+(\boldsymbol n\cdot\boldsymbol \omega)^{4})^{\frac{1}{2}}$ with $\boldsymbol \omega\in\mathbb{R}^{\nu}$. Moreover the functions $u_{tt}$, $-u_{xxxx}$, $-u_{xx}$, $(u^{2})_{xx}$ are continuous with respect to  $(t,x)\in[0,t_0)\times\mathbb{R}$.
\end{lemm}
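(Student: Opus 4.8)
The plan is to substitute the ansatz \eqref{ansatz} into equation \eqref{ne}, reduce it mode by mode to a scalar second-order ODE in $t$ for each coefficient $c(t,\boldsymbol n)$, and then solve that ODE explicitly; the summability hypothesis \eqref{ctu} is what makes every formal manipulation legitimate. First I would record that, writing $k=\boldsymbol n\cdot\boldsymbol\omega$, one has $|k|^{j}\leq\|\boldsymbol\omega\|_{\infty}^{j}|\boldsymbol n|^{j}$ for $j=0,1,2,3,4$, and since $|\boldsymbol n|^{3}\leq\frac12(|\boldsymbol n|^{2}+|\boldsymbol n|^{4})$ the weight $1+|\boldsymbol n|^{2}+|\boldsymbol n|^{4}$ dominates $1,|\boldsymbol n|,\dots,|\boldsymbol n|^{4}$ up to a fixed constant. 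Hence, by \eqref{ctu}, each series obtained by differentiating \eqref{ansatz} in $x$ up to four times, namely $\sum_{\boldsymbol n}c(t,\boldsymbol n)({\rm i}k)^{j}\exp({\rm i}xk)$, converges absolutely and uniformly on $[0,t_{0})\times\mathbb{R}$, so that $u,u_{xx},u_{xxxx}$ and (by the corresponding assumption on $A$) $(u^{2})_{xx}$ are continuous. Since $u$ solves \eqref{ne}, the identity $u_{tt}=-u_{xxxx}+u_{xx}+(u^{2})_{xx}$ exhibits $u_{tt}$ as a sum of continuous functions, which gives the final assertion of the lemma.

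Next I would extract the equation for the individual coefficients. The hypothesis $\boldsymbol n\cdot\boldsymbol\omega\neq0$ for $\boldsymbol n\neq0$ guarantees that the frequencies $\{\boldsymbol n\cdot\boldsymbol\omega\}$ are pairwise distinct, so the exponentials $\exp({\rm i}x\boldsymbol n\cdot\boldsymbol\omega)$ form an orthonormal family with respect to the mean value $M_{x}[f]=\lim_{T\to\infty}\frac{1}{2T}\int_{-T}^{T}f\,{\rm d}x$. Applying $M_{x}[\,\cdot\,\exp(-{\rm i}x\boldsymbol n\cdot\boldsymbol\omega)]$ to \eqref{ne} term by term — permissible because of the uniform convergence just established — projects $u_{xxxx}$ onto $k^{4}c(t,\boldsymbol n)$, $-u_{xx}$ onto $k^{2}c(t,\boldsymbol n)$, and $(u^{2})_{xx}$ onto $A(t,\boldsymbol n)$, while the $u_{tt}$ term produces $c_{tt}(t,\boldsymbol n)$. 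One therefore obtains, with $\lambda^{2}=(\boldsymbol n\cdot\boldsymbol\omega)^{2}+(\boldsymbol n\cdot\boldsymbol\omega)^{4}$, the linear ODE $c_{tt}(t,\boldsymbol n)+\lambda^{2}c(t,\boldsymbol n)=A(t,\boldsymbol n)$ together with the initial data $c(0,\boldsymbol n)=c(\boldsymbol n)$ and $c_{t}(0,\boldsymbol n)=c'(\boldsymbol n)$ coming from \eqref{seq}--\eqref{seq2}.

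It then remains to solve this constant-coefficient ODE. For $\boldsymbol n\neq0$ we have $\lambda>0$; the homogeneous solution $\alpha\exp({\rm i}\lambda t)+\beta\exp(-{\rm i}\lambda t)$ matched to the initial data forces $\alpha=\frac12 c(\boldsymbol n)-\frac{\rm i}{2\lambda}c'(\boldsymbol n)$ and $\beta=\frac12 c(\boldsymbol n)+\frac{\rm i}{2\lambda}c'(\boldsymbol n)$, which are exactly the two boundary terms in \eqref{cu}. A particular solution vanishing to first order at $t=0$ is furnished by Duhamel's formula with Green's function $\frac{\sin(\lambda(t-\tau))}{\lambda}$, and using $\sin\theta=\frac{\exp({\rm i}\theta)-\exp(-{\rm i}\theta)}{2{\rm i}}$ one checks that $\frac{\sin(\lambda(t-\tau))}{\lambda}=\frac{\exp({\rm i}\lambda(\tau-t))-\exp({\rm i}\lambda(t-\tau))}{-2\lambda{\rm i}}$, reproducing the integral term in \eqref{cu}. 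Superposing the two pieces yields \eqref{cu}.

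The delicate point — the one I would treat most carefully — is the passage from the pointwise PDE to the scalar ODE: it requires interchanging $\partial_{t}^{2}$ with the mean $M_{x}$ (equivalently, term-by-term differentiation in $t$) and isolating a single Fourier mode, both of which rest entirely on the uniform summability \eqref{ctu} and on the non-resonance condition; everything else is routine. A minor issue to flag is the exceptional mode $\boldsymbol n=0$, where $\lambda=0$ and the factors $1/\lambda$ in \eqref{cu} are singular. Here the formula is to be read in the limit $\lambda\to0$, in which $\frac{\exp({\rm i}\lambda t)-\exp(-{\rm i}\lambda t)}{2\lambda{\rm i}}\to t$ and $\frac{\sin(\lambda(t-\tau))}{\lambda}\to t-\tau$, so that \eqref{cu} degenerates to the correct solution $c(t,\boldsymbol 0)=c(\boldsymbol 0)+c'(\boldsymbol 0)t+\int_{0}^{t}(t-\tau)A(\tau,\boldsymbol 0)\,{\rm d}\tau$ of $c_{tt}(t,\boldsymbol 0)=A(t,\boldsymbol 0)$.
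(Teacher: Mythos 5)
Your proof is correct and follows essentially the same route as the paper: substitute the ansatz \eqref{ansatz} into \eqref{ne}, reduce to the mode-wise oscillator equation $c_{tt}(t,\boldsymbol n)+\lambda^{2}c(t,\boldsymbol n)=A(t,\boldsymbol n)$, and solve it by variation of constants, matching the initial data \eqref{seq}--\eqref{seq2} to fix the two constants; your Duhamel kernel $\sin(\lambda(t-\tau))/\lambda$ is exactly the paper's Wronskian expression $\bigl(\exp({\rm i}\lambda(\tau-t))-\exp({\rm i}\lambda(t-\tau))\bigr)/(-2\lambda{\rm i})$. You add two refinements that the paper passes over in silence: first, a justification of the mode-by-mode identification via the Bohr mean together with the pairwise distinctness of the frequencies $\boldsymbol n\cdot\boldsymbol\omega$ (the paper simply equates Fourier coefficients, which rests on the same facts); second, the observation that for $\boldsymbol n=\boldsymbol 0$ one has $\lambda=0$, so the $1/\lambda$ factors in \eqref{cu} must be read in the limiting sense $c(t,\boldsymbol 0)=c(\boldsymbol 0)+c'(\boldsymbol 0)\,t+\int_{0}^{t}(t-\tau)A(\tau,\boldsymbol 0)\,{\rm d}\tau$, a caveat that in fact applies to the paper's formulas throughout.
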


\begin{proof}
Substituting the ansatz \eqref{ansatz}--\eqref{ansatz1} into equation \eqref{ne} yields
\begin{align*}
&\sum_{\boldsymbol n\in
\mathbb{Z}^{\nu}}\frac{{\rm d}^{2}c(t,\boldsymbol n)}{{\rm d}t^{2}}\exp({\rm i}x\boldsymbol n\cdot\boldsymbol \omega)+\sum_{\boldsymbol n\in
\mathbb{Z}^{\nu}}({\rm i}\boldsymbol n\cdot\boldsymbol \omega)^{4}c(t,\boldsymbol n)\exp({\rm i}x\boldsymbol n\cdot\boldsymbol \omega)\\
&-\sum_{\boldsymbol n\in
\mathbb{Z}^{\nu}}({\rm i}\boldsymbol n\cdot\boldsymbol \omega)^{2}c(t,\boldsymbol n)\exp({\rm i}x\boldsymbol n\cdot\boldsymbol \omega)-\sum_{\boldsymbol n\in
\mathbb{Z}^{\nu}}A(t,\boldsymbol n)\exp({\rm i}x\boldsymbol n\cdot\boldsymbol \omega)=0
\end{align*}
This is  equivalent to
\begin{align}\label{second-order}
\frac{{\rm d}^{2}c(t,\boldsymbol n)}{{\rm d}t^{2}}+\left((\boldsymbol n\cdot\boldsymbol \omega)^{4}+(\boldsymbol n\cdot\boldsymbol \omega)^{2}\right)c(t,\boldsymbol n)-A(t,\boldsymbol n)=0.
\end{align}
The corresponding characteristic equation for the  homogeneous equation of  \eqref{second-order}  is
\begin{align*}
\eta^{2}+(\boldsymbol n\cdot\boldsymbol \omega)^{4}+(\boldsymbol n\cdot\boldsymbol \omega)^{2}=0.
\end{align*}
Thus the homogeneous equation has two solutions $\exp({\rm i}\lambda t)$ and $\exp(-{\rm i}\lambda t)$. By variation of constants formula, we obtain
\begin{align*}
c(t,\boldsymbol n) = c_{1}\exp({\rm i}\lambda t)+c_{2}\exp(-{\rm i}\lambda t)
+\int_{0}^{t}\frac{\Delta(t,\tau)}{W(\tau)}A(\tau,\boldsymbol n){\rm d}\tau,
\end{align*}
where
\begin{align*}
&{\Delta(t,\tau)}:=\det\left(
\begin{array}{cc}
 \exp({\rm i}\lambda\tau) &  \exp(-{\rm i}\lambda\tau) \\
 \exp({\rm i}\lambda t) &  \exp(-{\rm i}\lambda t)
\end{array}
\right)=\exp({\rm i}\lambda(\tau-t))-\exp({\rm i}\lambda(t-\tau)),\\
&{W(\tau)}:=\det\left(
\begin{array}{cc}
 \exp({\rm i}\lambda\tau) &  \exp(-{\rm i}\lambda\tau) \\
 {\rm i}\lambda \exp({\rm i}\lambda\tau) & -{\rm i}\lambda \exp(-{\rm i}\lambda\tau)
\end{array}
\right)=-{\rm i}\lambda-{\rm i}\lambda=-2{\rm i}\lambda.
\end{align*}
Using initial conditions \eqref{seq}--\eqref{seq2}, we have
\begin{align*}
c(t,\boldsymbol n) = c_{1}\exp({\rm i}\lambda t)+c_{2}\exp(-{\rm i}\lambda t)
+\int_{0}^{t}\frac{\exp ({\rm i}\lambda(\tau-t))-\exp( {\rm i}\lambda(t-\tau))}{-2\lambda {\rm i}}A(\tau,\boldsymbol n){\rm d}\tau,
\end{align*}
where
\begin{align*}
c_{1}=\frac{1}{2}c(\boldsymbol n)-\frac{\rm i}{2\lambda}c'(\boldsymbol n),\quad c_{2}=\frac{1}{2}c(\boldsymbol n)+\frac{\rm i}{2\lambda}c'(\boldsymbol n).
\end{align*}
Moreover, all series involved converges absolutely and uniformly under condition \eqref{ctu}.  This completes the proof of Lemma \ref{cgwye}.
\end{proof}

In the following lemma, we further present  the clearer forms than \eqref{cu} for  the Fourier coefficients $c(t,\boldsymbol n)$ associated with the ansatz \eqref{ansatz}.

\begin{lemm}\label{cgl}
Let $c(t,\boldsymbol n)$ be the Fourier coefficients associated with the ansatz \eqref{ansatz}.  Then for  $\boldsymbol n,\boldsymbol m\in
\mathbb{Z}^{\nu}$, one has the following integral equations
\begin{align}\label{cgo}
c(t,\boldsymbol n)=&\frac{1}{2}c(\boldsymbol n)\left(\exp\left( {\rm i}t\lambda\right)+\exp \left(-{\rm i}t\lambda\right)\right)-\frac{{\rm i}}{2\lambda}c'(\boldsymbol n)\left(\exp\left( {\rm i}t\lambda\right)-\exp \left(-{\rm i}t\lambda\right)\right)\nonumber\\
&-\frac{\rm i}{\lambda}
\int_{0}^{t}\left(\exp\left({\rm i}(\tau-t)\lambda\right)
-\exp\left({\rm i}(t-\tau)\lambda\right)\right)\sum_{\boldsymbol m\in
\mathbb{Z}^{\nu}}(\boldsymbol m\cdot\boldsymbol\omega)(\boldsymbol n\cdot\boldsymbol\omega)c(t,\boldsymbol m)c(t,\boldsymbol n-\boldsymbol m){\rm d}\tau,
\end{align}
where $\lambda=((\boldsymbol n\cdot\boldsymbol \omega)^{2}+(\boldsymbol n\cdot\boldsymbol \omega)^{4})^{\frac{1}{2}}$ with $\boldsymbol \omega\in\mathbb{R}^{\nu}$.
\end{lemm}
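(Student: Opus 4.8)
The plan is to treat the integral representation \eqref{cu} furnished by Lemma \ref{cgwye} as a black box and merely to make the coefficient $A(\tau,\boldsymbol n)$ explicit in terms of the $c(t,\boldsymbol n)$. The homogeneous part of \eqref{cu} already coincides, after regrouping, with the first line of \eqref{cgo}; consequently the entire content of the lemma reduces to computing the Fourier coefficient of the nonlinearity $(u^{2})_{xx}$ and substituting it into the Duhamel integral.

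First I would compute the Fourier coefficients of $u^{2}$. Inserting the ansatz \eqref{ansatz} and forming the Cauchy product,
\begin{align*}
u(t,x)^{2}=\sum_{\boldsymbol k,\boldsymbol l\in\mathbb{Z}^{\nu}}c(t,\boldsymbol k)c(t,\boldsymbol l)\exp\left({\rm i}x(\boldsymbol k+\boldsymbol l)\cdot\boldsymbol\omega\right)=\sum_{\boldsymbol n\in\mathbb{Z}^{\nu}}\left(\sum_{\boldsymbol m\in\mathbb{Z}^{\nu}}c(t,\boldsymbol m)c(t,\boldsymbol n-\boldsymbol m)\right)\exp\left({\rm i}x\boldsymbol n\cdot\boldsymbol\omega\right),
\end{align*}
where the reindexing $\boldsymbol n=\boldsymbol k+\boldsymbol l$, $\boldsymbol m=\boldsymbol k$ is legitimate because assumption \eqref{ctu} guarantees absolute convergence. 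Applying $\partial_{x}^{2}$ brings down the factor $({\rm i}\boldsymbol n\cdot\boldsymbol\omega)^{2}=-(\boldsymbol n\cdot\boldsymbol\omega)^{2}$ on the $\boldsymbol n$-th mode, so that comparison with \eqref{ansatz1} yields $A(t,\boldsymbol n)=-(\boldsymbol n\cdot\boldsymbol\omega)^{2}\sum_{\boldsymbol m}c(t,\boldsymbol m)c(t,\boldsymbol n-\boldsymbol m)$.

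The key step, and the only one requiring an idea rather than bookkeeping, is to rewrite this using the symmetry of the convolution under the involution $\boldsymbol m\mapsto\boldsymbol n-\boldsymbol m$. Since $\sum_{\boldsymbol m}c(t,\boldsymbol m)c(t,\boldsymbol n-\boldsymbol m)$ is invariant under this relabelling, averaging the weight $(\boldsymbol m\cdot\boldsymbol\omega)$ against its reflection $((\boldsymbol n-\boldsymbol m)\cdot\boldsymbol\omega)$ and using $(\boldsymbol m\cdot\boldsymbol\omega)+((\boldsymbol n-\boldsymbol m)\cdot\boldsymbol\omega)=\boldsymbol n\cdot\boldsymbol\omega$ gives
\begin{align*}
\sum_{\boldsymbol m\in\mathbb{Z}^{\nu}}(\boldsymbol m\cdot\boldsymbol\omega)c(t,\boldsymbol m)c(t,\boldsymbol n-\boldsymbol m)=\tfrac{1}{2}(\boldsymbol n\cdot\boldsymbol\omega)\sum_{\boldsymbol m\in\mathbb{Z}^{\nu}}c(t,\boldsymbol m)c(t,\boldsymbol n-\boldsymbol m).
\end{align*}
Multiplying through by $2(\boldsymbol n\cdot\boldsymbol\omega)$ converts the factor $(\boldsymbol n\cdot\boldsymbol\omega)^{2}$ in $A(t,\boldsymbol n)$ into $2(\boldsymbol m\cdot\boldsymbol\omega)(\boldsymbol n\cdot\boldsymbol\omega)$, whence
\begin{align*}
A(t,\boldsymbol n)=-2\sum_{\boldsymbol m\in\mathbb{Z}^{\nu}}(\boldsymbol m\cdot\boldsymbol\omega)(\boldsymbol n\cdot\boldsymbol\omega)c(t,\boldsymbol m)c(t,\boldsymbol n-\boldsymbol m).
\end{align*}

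Finally I would insert this expression for $A(\tau,\boldsymbol n)$ into the Duhamel integral of \eqref{cu}; the scalar prefactor collapses via $\frac{-2}{-2\lambda{\rm i}}=\frac{1}{\lambda{\rm i}}=-\frac{{\rm i}}{\lambda}$, and regrouping the two homogeneous exponentials of \eqref{cu} into the $\frac{1}{2}c(\boldsymbol n)(\exp({\rm i}t\lambda)+\exp(-{\rm i}t\lambda))$ and $-\frac{{\rm i}}{2\lambda}c'(\boldsymbol n)(\exp({\rm i}t\lambda)-\exp(-{\rm i}t\lambda))$ terms reproduces \eqref{cgo} exactly. The main obstacle here is conceptual rather than computational: one must recognise that the Laplacian factor $(\boldsymbol n\cdot\boldsymbol\omega)^{2}$ produced by $\partial_{x}^{2}$ should be split symmetrically across the two convolution factors as $(\boldsymbol m\cdot\boldsymbol\omega)(\boldsymbol n\cdot\boldsymbol\omega)$. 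This form is not cosmetic — distributing a single factor of $\boldsymbol\omega$ onto each child mode is precisely what renders the tree/lattice combinatorics and the exponential-decay estimates developed later in Section \ref{sec:2} tractable.
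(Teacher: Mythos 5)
Your proposal is correct, and it reaches the same expression $A(t,\boldsymbol n)=-2\sum_{\boldsymbol m}(\boldsymbol m\cdot\boldsymbol\omega)(\boldsymbol n\cdot\boldsymbol\omega)c(t,\boldsymbol m)c(t,\boldsymbol n-\boldsymbol m)$ that the paper needs, but by a genuinely different algebraic route. The paper first applies the product rule, writing $(u^{2})_{xx}=2(u_{x}u_{x}+u_{xx}u)$, then expands each of $u_{x}u_{x}$ and $u_{xx}u$ as a double Fourier series; the cross terms combine as $\left((\boldsymbol m\cdot\boldsymbol\omega)^{2}-(\boldsymbol m\cdot\boldsymbol\omega)(\boldsymbol n\cdot\boldsymbol\omega)\right)-(\boldsymbol m\cdot\boldsymbol\omega)^{2}=-(\boldsymbol m\cdot\boldsymbol\omega)(\boldsymbol n\cdot\boldsymbol\omega)$, so the mixed form appears automatically, with no symmetrization needed. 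You instead differentiate the Cauchy product of $u$ with itself, obtaining the symmetric coefficient $-(\boldsymbol n\cdot\boldsymbol\omega)^{2}\sum_{\boldsymbol m}c(t,\boldsymbol m)c(t,\boldsymbol n-\boldsymbol m)$, and then split the factor $(\boldsymbol n\cdot\boldsymbol\omega)^{2}$ across the convolution via the involution $\boldsymbol m\mapsto\boldsymbol n-\boldsymbol m$, using $(\boldsymbol m\cdot\boldsymbol\omega)+((\boldsymbol n-\boldsymbol m)\cdot\boldsymbol\omega)=\boldsymbol n\cdot\boldsymbol\omega$; this reindexing is justified by the absolute convergence in \eqref{ctu}, as you note. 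Your symmetrization argument is slightly more economical (one Fourier expansion instead of three) and makes transparent why the asymmetric-looking weight $(\boldsymbol m\cdot\boldsymbol\omega)(\boldsymbol n\cdot\boldsymbol\omega)$ is equivalent to the symmetric one; the paper's Leibniz-rule computation is more mechanical but avoids any appeal to rearrangement of the series beyond what the Cauchy product already requires. Both proofs then conclude identically, substituting $A(\tau,\boldsymbol n)$ into the Duhamel formula \eqref{cu} of Lemma \ref{cgwye} and simplifying the prefactor $\frac{-2}{-2\lambda{\rm i}}=-\frac{{\rm i}}{\lambda}$.
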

\begin{proof}
The key of the proof is to give the expression of $A(t,\boldsymbol n)$. Observe that
\begin{align*}
(u^{2})_{xx}=2(u_{x} u_{x}+u_{xx}u).
\end{align*}
Moreover, it follows from \eqref{ansatz} that
\begin{align*}
&u_{x}=\sum_{\boldsymbol n}c(t,\boldsymbol n)({\rm i}\boldsymbol n\cdot\boldsymbol \omega)\exp({\rm i} x\boldsymbol n\cdot\boldsymbol \omega),\\
&u_{xx}=\sum_{\boldsymbol n}c(t,\boldsymbol n)({\rm i}\boldsymbol n\cdot\boldsymbol \omega)^{2}\exp({\rm i} x\boldsymbol n\cdot\boldsymbol \omega)
=-\sum_{\boldsymbol n}c(t,\boldsymbol n)(\boldsymbol n\cdot\boldsymbol \omega)^{2}\exp({\rm i}x\boldsymbol n\cdot\boldsymbol \omega).
\end{align*}
Hence,
\begin{align*}
&u_{x} u_{x}=\sum_{\boldsymbol n}\sum_{\boldsymbol m}((\boldsymbol m\cdot\boldsymbol\omega)^2-(\boldsymbol m\cdot\boldsymbol\omega)(\boldsymbol n\cdot\boldsymbol\omega))c(t,\boldsymbol m)c(t,\boldsymbol n-\boldsymbol m)\exp\left({\rm i} x\boldsymbol n\cdot\boldsymbol\omega \right),\\
&u_{xx}u =\sum_{\boldsymbol n}\sum_{\boldsymbol m}-(\boldsymbol m\cdot\boldsymbol\omega)^2c(t,\boldsymbol m)c(t,\boldsymbol n-\boldsymbol m)\exp({\rm i} x\boldsymbol n\cdot\boldsymbol \omega ).
\end{align*}
Consequently, we get
\begin{align*}
(u^{2})_{xx}
=2\sum_{\boldsymbol n}\sum_{\boldsymbol m}-(\boldsymbol m\cdot\boldsymbol\omega)(\boldsymbol n\cdot\boldsymbol\omega)c(t,\boldsymbol m)c(t,\boldsymbol n-\boldsymbol m)\exp({\rm i} x\boldsymbol n\cdot\boldsymbol \omega).
\end{align*}
This shows that 
\begin{align*}
A(t,\boldsymbol n)=-2\sum_{\boldsymbol m}(\boldsymbol m\cdot\boldsymbol\omega)(\boldsymbol n\cdot\boldsymbol\omega)c(t,\boldsymbol m)c(t,\boldsymbol n-\boldsymbol m).
\end{align*}
The proof of the lemma is now completed.
\end{proof}

By Lemma \ref{cgl}, we can obtain the integral equation \eqref{cgo}. In order to prove the existence and uniqueness of solutions for equation \eqref{cgo}, we will  construct the Picard iteration sequence  of $c(t,\boldsymbol n)$. Moreover we have to assume that the Fourier coefficients $c(\boldsymbol n), c^{\prime}(\boldsymbol n)$  associated with initial data \eqref{seq}--\eqref{seq2} are exponential decay. Namely, there exist two constants $B>0$, $0<\kappa\leq1$ such that for all $\boldsymbol n\in\mathbb{Z}^{\nu}$,
\begin{align}\label{Ea}
|c(\boldsymbol n)| \leq  B\exp\left(-\frac{\kappa|\boldsymbol n|}{2}\right), \quad |c'(\boldsymbol n)|\leq B|\boldsymbol \omega|\exp \left(-\frac{\kappa|\boldsymbol n|}{2} \right)
\end{align}
with $\boldsymbol \omega\in\mathbb{R}^{\nu}$. Thus we have to start the iteration from an exponentially decaying collection of Fourier coefficients and keep this property in check.

Let $\lambda=((\boldsymbol n\cdot\boldsymbol \omega)^{2}+(\boldsymbol n\cdot\boldsymbol \omega)^{4})^{\frac{1}{2}}$ with $\boldsymbol n\in\mathbb{Z}^{\nu},\boldsymbol \omega\in\mathbb{R}^{\nu}$. We can construct a sequence $\{c_{k}(t,\boldsymbol n)\},k\geq0$ as follows
\begin{align}\label{Eb}
c_{0}(t,\boldsymbol n)=& \frac{1}{2}c(\boldsymbol n)\left(\exp\left( {\rm i}t\lambda\right)+\exp \left(-{\rm i}t\lambda\right)\right)
-\frac{{\rm i}}{2\lambda}c'(\boldsymbol n)\left(\exp\left( {\rm i}t\lambda\right)-\exp\left(-{\rm i}t\lambda\right)\right),
\end{align}
and for $k=1,2,\cdots$,
\begin{align}
c_{k}(t,\boldsymbol n)=&\frac{1}{2}c(\boldsymbol n)\left(\exp\left( {\rm i}t\lambda\right)+\exp \left(-{\rm i}t\lambda\right)\right)-\frac{{\rm i}}{2\lambda}c'(\boldsymbol n)\left(\exp\left( {\rm i}t\lambda\right)-\exp\left(-{\rm i}t\lambda\right)\right)\nonumber\\
&-\frac{{\rm i}\boldsymbol n\cdot\boldsymbol \omega}{2\sqrt{1+(\boldsymbol n\cdot\boldsymbol \omega)^{2}}}\int_{0}^{t}\left(\exp\left({\rm i}(\tau-t)\lambda\right)
-\exp\left({\rm i}(t-\tau)\lambda\right)\right)\nonumber\\
&\quad\quad\quad\quad\quad\quad\quad\quad\quad\quad\times\sum_{\stackrel{\boldsymbol m_{1},\boldsymbol m_{2}\in \mathbb{Z}^{\nu}}{\boldsymbol m_{1}+\boldsymbol m_{2}=\boldsymbol n}}c_{k-1}(\tau,\boldsymbol m_{1})c_{k-1}(\tau,\boldsymbol m_{2}){\rm d}\tau. \label{Ec}
\end{align}

For some constant $t_{0}>0$, we will show  inductively that the functions $c_{k}(t,\boldsymbol n)$ are well-defined and continuous with respect to $t\in[0,t_0)$. On the other hand, we need to prove that the sequence $\{c_{k}(t,\boldsymbol n)\}$ converges absolutely and uniformly on the interval $0\leq t< t_{0}$.  However it is very difficult to prove the absolute and uniform convergence of the sequence $\{c_{k}(t,\boldsymbol n)\}$. In fact, through the observation of $c_{k}(t,\boldsymbol n)$, we find that it has 6 terms for $k=1$, 38 terms for $k=2$, 1446 terms for $k=3$, and so on. This means that $c_{k}(t,\boldsymbol n)$ will have an infinite number of terms as $k$ tends to $\infty$. 

As a result, we intend to represent $c_{k}(t,\boldsymbol n)$. By virtue of the summation in \eqref{Ec}, we first label these terms of the iterative equation via points on a tree. The  branches of the tree originate from points on the lattice $\mathbb{Z}^{\nu}$ and split under the condition $\boldsymbol m_{1}+\boldsymbol m_{2}=constant$. Our next goal is to introduce the branches $\boldsymbol\gamma$  by induction, and then attach an appropriate lattice $\mathbb{Z}^{\nu}$ to each branch for keeping the terms of the iterative equation.
Finally, we define some weights which enable us to develop estimation techniques for iterative equations. Although the definition of these objects seems to be quite complicated, it is naturally generated by the induction of the number of iterations of the equation.

Now let us introduce some definitions. Denote by ``$\times$'' the cartesian product. We set
\begin{align}\label{nne}
&\mathscr{D}^{(1)}=\{0,1\},\nonumber\\
&\mathscr{D}^{(2)}=\mathscr{D}^{(1)}\cup\mathscr{D}^{(1)}\times\mathscr{D}^{(1)}=\{0,1\}\cup\{(0,0),(0,1),(1,0),(1,1)\}\nonumber\\ &~\quad\quad\quad\quad\quad\quad\quad\quad\quad\quad\quad=\{0,1,(0,0),(0,1),(1,0),(1,1)\},\nonumber\\
&\mathscr{D}^{(k)}=\mathscr{D}^{(1)}\cup\mathscr{D}^{(k-1)}\times\mathscr{D}^{(k-1)},\quad k=3,4,\cdots,
\end{align}
and
\begin{align}\label{m-k}
\ \mathfrak{M}^{(k,\boldsymbol\gamma)}= \left\{ \begin{aligned}
&\mathbb{Z}^{\nu}&\text{if}~&\boldsymbol\gamma=0~\text{or}~1\in\mathscr{D}^{(k)},\\
&\mathbb{Z}^{\nu}\times \mathbb{Z}^{\nu} &\text{if}~&\boldsymbol\gamma\in\mathscr{D}^{(2)},\boldsymbol\gamma=(0,0)~\text{or}~(0,1)~\text{or}~(1,0)~\text{or}~(1,1),\\
&\mathfrak{M}^{(k-1,\boldsymbol\gamma_{1}^{(k-1)})}\times \mathfrak{M}^{(k-1,\boldsymbol\gamma_{2}^{(k-1)})} &\text{if}~&\boldsymbol\gamma\in\mathscr{D}^{(k)},k\geq3,\\ &\quad&&\boldsymbol\gamma=(\boldsymbol\gamma_{1}^{(k-1)},\boldsymbol\gamma_{2}^{(k-1)})\in\mathscr{D}^{(k-1)}\times\mathscr{D}^{(k-1)}.
\end{aligned}\right.
\end{align}
For $\boldsymbol m^{(k)}\in\mathfrak{M}^{(k,\boldsymbol\gamma)}$, we further define
\begin{align}\label{tre}
\ \mathfrak{C}(\boldsymbol m^{(k)})= \left\{ \begin{aligned}
&c(\boldsymbol m) &\text{if}~&\boldsymbol\gamma=0\in\mathscr{D}^{(k)},\boldsymbol m^{(k)}=\boldsymbol m\in\mathfrak{M}^{(k,\boldsymbol\gamma)},\\
&c'(\boldsymbol m)~&\text{if}~&\boldsymbol\gamma=1\in\mathscr{D}^{(k)},\boldsymbol m^{(k)}=\boldsymbol m\in\mathfrak{M}^{(k,\boldsymbol\gamma)},\\
&c(\boldsymbol m_{1})c(\boldsymbol m_{2})&\text{if}~&\boldsymbol\gamma=(0,0)\in\mathscr{D}^{(2)},\boldsymbol m^{(2)}=(\boldsymbol m_{1},\boldsymbol m_{2})\in\mathfrak{M}^{(2,\boldsymbol\gamma)},\\
&c(\boldsymbol m_{1})c'(\boldsymbol m_{2})&\text{if}~&\boldsymbol\gamma=(0,1)\in\mathscr{D}^{(2)},\boldsymbol m^{(2)}=(\boldsymbol m_{1},\boldsymbol m_{2})\in\mathfrak{M}^{(2,\boldsymbol\gamma)},\\
&c'(\boldsymbol m_{1})c(\boldsymbol m_{2})&\text{if}~&\boldsymbol\gamma=(1,0)\in\mathscr{D}^{(2)},\boldsymbol m^{(2)}=(\boldsymbol m_{1},\boldsymbol m_{2})\in\mathfrak{M}^{(2,\boldsymbol\gamma)},\\
&c'(\boldsymbol m_{1})c'(\boldsymbol m_{2})&\text{if}~&\boldsymbol\gamma=(1,1)\in\mathscr{D}^{(2)},\boldsymbol m^{(2)}=(\boldsymbol m_{1},\boldsymbol m_{2})\in\mathfrak{M}^{(2,\boldsymbol\gamma)},\\
&\mathfrak{C}(\boldsymbol m_{1}^{(k-1)})\mathfrak{C}(\boldsymbol m_{2}^{(k-1)})
&\text{if}~&\boldsymbol m^{(k)}\in\mathfrak{M}^{(k,\boldsymbol\gamma)},k\geq3,\\ &\quad&&\boldsymbol\gamma=(\boldsymbol\gamma_{1}^{(k-1)},\boldsymbol\gamma_{2}^{(k-1)})\in\mathscr{D}^{(k-1)}\times\mathscr{D}^{(k-1)},\\
&\quad&&\boldsymbol m^{(k)}=(\boldsymbol m_{1}^{(k-1)},\boldsymbol m_{2}^{(k-1)})\in \mathfrak{M}^{(k-1,\boldsymbol\gamma_{1}^{(k-1)})}\times \mathfrak{M}^{(k-1,\boldsymbol\gamma_{2}^{(k-1)})},\\
\end{aligned}\right.
\end{align}
and
\begin{align}\label{es2}
\ f(\boldsymbol m^{(k)})= \left\{\begin{aligned}
&1&\text{if}~&\boldsymbol\gamma=0~\text{or}~1\in\mathscr{D}^{(k)},\boldsymbol m^{(k)}\in\mathfrak{M}^{(k,\boldsymbol\gamma)},\\
&\frac{-{\rm i}\mu(\boldsymbol m^{(2)})\cdot\boldsymbol\omega}{2\sqrt{1+(\mu(\boldsymbol m^{(2)})\cdot\boldsymbol\omega)^{2}}} &\text{if}~&\boldsymbol\gamma\in\mathscr{D}^{(2)},\\
&\quad&&\boldsymbol\gamma = (0,0)~\text{or}~(0,1)~\text{or}~(1,0)~\text{or}~(1,1),\\
&\quad&&\boldsymbol m^{(2)} \in \mathfrak{M}^{(2,\boldsymbol\gamma)},\\
&\frac{-{\rm i}\mu(\boldsymbol m^{(k)})\cdot\boldsymbol\omega}{2\sqrt{1+(\mu(\boldsymbol m^{(k)})\cdot\boldsymbol\omega)^{2}}}f(\boldsymbol m_{1}^{(k-1)})f(\boldsymbol m_{2}^{(k-1)})
&\text{if}~&\boldsymbol\gamma\in\mathscr{D}^{(k)}, k\geq3,\gamma=\boldsymbol(\boldsymbol\gamma_{1}^{(k-1)},\boldsymbol\gamma_{2}^{(k-1)})\\
&~&&\in\mathscr{D}^{(k-1)}\times\mathscr{D}^{(k-1)},\\
&~&&\boldsymbol m^{(k)}=(\boldsymbol m_{1}^{(k-1)},\boldsymbol m_{2}^{(k-1)})\\
&~&&\in \mathfrak{M}^{(k-1,\boldsymbol\gamma_{1}^{(k-1)})}\times \mathfrak{M}^{(k-1,\boldsymbol\gamma_{2}^{(k-1)})},\\
\end{aligned}\right.
\end{align}
where
\begin{align}\label{hel}
\mu(\boldsymbol m)=\sum_{j}\boldsymbol m_{j},\quad\boldsymbol m=(\boldsymbol m_{1},\cdots,\boldsymbol m_{N}),\quad\boldsymbol m_{j}\in \mathbb Z^{n\nu}, n,N\in \mathbb N_{+}.
\end{align}
Moreover, for $t>0$, we also define
\begin{align}\label{g1}
I(t,\boldsymbol m^{(k)})=
\left\{\begin{aligned}
&\frac{1}{2}
\exp\left( {\rm i}t\lambda_{\boldsymbol m^{(k)}}\right)
+\frac{1}{2}\exp\left( -{\rm i}t\lambda_{\boldsymbol m^{(k)}}\right)\\
&\qquad\qquad\qquad\qquad\text{if}~\boldsymbol\gamma=0\in\mathscr{D}^{(k)},~\boldsymbol m^{(k)}\in\mathfrak{M}^{(k,\boldsymbol\gamma)},\\
&\frac{-\rm i}{2\lambda_{\boldsymbol m^{(k)}}}\exp\left( {\rm i}t\lambda_{\boldsymbol m^{(k)}}\right)
+\frac{{\rm i}}{2\lambda_{\boldsymbol m^{(k)}}}\exp\left(-{\rm i}t\lambda_{\boldsymbol m^{(k)}}\right)\\
&\qquad\qquad\qquad\qquad\text{if}~\boldsymbol\gamma=1\in\mathscr{D}^{(k)},~\boldsymbol m^{(k)}\in\mathfrak{M}^{(k,\boldsymbol\gamma)},\\
&\frac{1}{4}\int_{0}^{t}(\exp\left({\rm i}(\tau-t)\lambda_{\boldsymbol m^{(2)}}\right)
-\exp\left({\rm i}(t-\tau)\lambda_{\boldsymbol m^{(2)}}\right))\\
&\qquad\times(\exp\left( {\rm i}\tau\lambda_{\boldsymbol m_{1}}\right)
+\exp\left( -{\rm i}\tau\lambda_{\boldsymbol m_{1}}\right))(\exp\left( {\rm i}\tau\lambda_{\boldsymbol m_{2}}\right)+\exp\left( -{\rm i}\tau\lambda_{\boldsymbol m_{2}}\right)){\rm d}\tau\\
&\qquad\qquad\qquad\qquad\text{if}~\boldsymbol\gamma\in\mathscr{D}^{(2)},\boldsymbol\gamma=(0,0),~\boldsymbol m^{(2)}=(\boldsymbol m_{1},\boldsymbol m_{2})\in\mathfrak{M}^{(2,\boldsymbol\gamma)},\\
&\frac{{-\rm i}}{4\lambda_{\boldsymbol m_{2}}}\int_{0}^{t}(\exp\left({\rm i}(\tau-t)\lambda_{\boldsymbol m^{(2)}}\right)
-\exp\left({\rm i}(t-\tau)\lambda_{\boldsymbol m^{(2)}}\right))\\
&\qquad\qquad\times(\exp\left( {\rm i}\tau\lambda_{\boldsymbol m_{1}}\right)
+\exp\left( -{\rm i}\tau\lambda_{\boldsymbol m_{1}}\right))
(\exp\left( {\rm i}\tau\lambda_{\boldsymbol m_{2}}\right)
+\exp\left( -{\rm i}\tau\lambda_{\boldsymbol m_{2}}\right)){\rm d}\tau\\
&\qquad\qquad\qquad\qquad\text{if}~\boldsymbol\gamma\in\mathscr{D}^{(2)},\boldsymbol\gamma=(0,1),~\boldsymbol m^{(2)}=(\boldsymbol m_{1},\boldsymbol m_{2})\in\mathfrak{M}^{(2,\boldsymbol\gamma)},\\
&\frac{-{\rm i}}{4\lambda_{\boldsymbol m_{1}}}\int_{0}^{t}(\exp\left({\rm i}(\tau-t)\lambda_{\boldsymbol m^{(2)}}\right)
-\exp\left({\rm i}(t-\tau)\lambda_{\boldsymbol m^{(2)}}\right))\\
&\qquad\qquad\times(\exp\left( {\rm i}\tau\lambda_{\boldsymbol m_{1}}\right)
+\exp\left( -{\rm i}\tau\lambda_{\boldsymbol m_{1}}\right))(\exp\left( {\rm i}\tau\lambda_{\boldsymbol m_{2}}\right)
+\exp\left( -{\rm i}\tau\lambda_{\boldsymbol m_{2}}\right)){\rm d}\tau\\
&\qquad\qquad\qquad\qquad\text{if}~\boldsymbol\gamma\in\mathscr{D}^{(2)},\boldsymbol\gamma=(1,0),~\boldsymbol m^{(2)}=(\boldsymbol m_{1},\boldsymbol m_{2})\in\mathfrak{M}^{(2,\boldsymbol\gamma)},\\
&\frac{1}{4\lambda_{\boldsymbol m_{1}}\lambda_{\boldsymbol m_{2}}}\int_{0}^{t}(\exp\left({\rm i}(\tau-t)\lambda_{\boldsymbol m^{(2)}}\right)
-\exp\left({\rm i}(t-\tau)\lambda_{\boldsymbol m^{(2)}}\right))\\
&\qquad\qquad\qquad\times(\exp\left( {\rm i}\tau\lambda_{\boldsymbol m_{1}}\right)
+\exp\left( -{\rm i}\tau\lambda_{\boldsymbol m_{1}}\right))(\exp\left( {\rm i}\tau\lambda_{\boldsymbol m_{2}}\right)
+\exp\left( -{\rm i}\tau\lambda_{\boldsymbol m_{2}}\right)){\rm d}\tau\\
&\qquad\qquad\qquad\qquad\text{if}~\boldsymbol\gamma\in\mathscr{D}^{(2)},\boldsymbol\gamma=(1,1),~\boldsymbol m^{(2)}=(\boldsymbol m_{1},\boldsymbol m_{2})\in\mathfrak{M}^{(2,\boldsymbol\gamma)},\\
&\int_{0}^{t}(\exp\left({\rm i}(\tau-t)\lambda_{\boldsymbol m^{(k)}}\right)
-\exp\left({\rm i}(t-\tau)\lambda_{\boldsymbol m^{(k)}}\right))
I(\tau,\boldsymbol m_{1}^{(k-1)})\times I(\tau,\boldsymbol m_{2}^{(k-1)}){\rm d}\tau\\
&\qquad\qquad\qquad\qquad\text{if}~\gamma\in\mathscr{D}^{(k)},k\geq3,\boldsymbol\gamma=(\boldsymbol\gamma_{1}^{(k-1)},\boldsymbol\gamma_{2}^{(k-1)})\in\mathscr{D}^{(k-1)}\times\mathscr{D}^{(k-1)},\\
&\qquad\qquad\qquad\qquad\quad \boldsymbol m^{(k)}=(\boldsymbol m_{1}^{(k-1)},\boldsymbol m_{2}^{(k-1)})\in \mathfrak{M}^{(k-1,\boldsymbol\gamma_{1}^{(k-1)})}\times \mathfrak{M}^{(k-1,\boldsymbol\gamma_{2}^{(k-1)})},
\end{aligned}\right.
\end{align}
where
\begin{align*}
&\lambda_{\boldsymbol m^{(k)}}=((\mu(\boldsymbol m^{(k)})\cdot\boldsymbol\omega)^{2}+(\mu(\boldsymbol m^{(k)})\cdot\boldsymbol\omega)^{4})^{\frac{1}{2}},\quad\lambda_{\boldsymbol m^{(2)}}=((\mu(\boldsymbol m^{(2)})\cdot\boldsymbol\omega)^{2}+(\mu(\boldsymbol m^{(2)})\cdot\boldsymbol\omega)^{4})^{\frac{1}{2}},\nonumber\\
&\lambda_{\boldsymbol m_{1}}=((\boldsymbol  m_{1}\cdot\boldsymbol\omega)^{2}+(\boldsymbol m_{1}\cdot\boldsymbol\omega)^{4})^{\frac{1}{2}},\quad\lambda_{\boldsymbol m_{2}}=((\boldsymbol  m_{2}\cdot\boldsymbol\omega)^{2}+(\boldsymbol m_{2}\cdot\boldsymbol\omega)^{4})^{\frac{1}{2}}.
\end{align*}

The following lemma addresses that $c_{k}(t,\boldsymbol n)$ can be expressed by the functions $\mathfrak{C}^{(k,\boldsymbol\gamma)}(\boldsymbol m^{(k)})$, $f^{(k,\boldsymbol\gamma)}(\boldsymbol m^{(k)})$ and $I^{(k,\boldsymbol\gamma)}(t,\boldsymbol m^{(k)})$ defined above.
\begin{lemm}\label{tpl}
For $k=1,2,\cdots$, the function $c_{k-1}(t,\boldsymbol n)$ defined in \eqref{Eb}--\eqref{Ec} is the $k$-th term of the sequence $\{d_{k}(t,\boldsymbol n)\}$, that is,
\begin{align}\label{tp}
d_{k}(t,\boldsymbol n)=\sum_{\boldsymbol\gamma\in\mathscr{D}^{(k)}}\sum_{\stackrel{\boldsymbol m^{(k)}\in\mathfrak{M}^{(k,\boldsymbol\gamma)}}{\mu(\boldsymbol m^{(k)})=\boldsymbol n}} \mathfrak{C}^{(k,\boldsymbol\gamma)}(\boldsymbol m^{(k)})f^{(k,\boldsymbol\gamma)}(\boldsymbol m^{(k)})I^{(k,\boldsymbol\gamma)}(t,\boldsymbol m^{(k)})=c_{k-1}(t,\boldsymbol n).
\end{align}

\end{lemm}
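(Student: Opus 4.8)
The plan is to prove \eqref{tp} by induction on $k$, exploiting the fact that every one of the objects $\mathscr{D}^{(k)}$, $\mathfrak{M}^{(k,\boldsymbol\gamma)}$, $\mathfrak{C}^{(k,\boldsymbol\gamma)}$, $f^{(k,\boldsymbol\gamma)}$ and $I^{(k,\boldsymbol\gamma)}$ was defined recursively in \eqref{nne}--\eqref{g1}. First I would settle the base case $k=1$. Here $\mathscr{D}^{(1)}=\{0,1\}$, and for both admissible branches the constraint $\mu(\boldsymbol m^{(1)})=\boldsymbol n$ forces $\boldsymbol m^{(1)}=\boldsymbol n$. Reading off the definitions, the branch $\boldsymbol\gamma=0$ contributes $\mathfrak{C}=c(\boldsymbol n)$, $f=1$ and $I=\tfrac12(\exp(\mathrm it\lambda)+\exp(-\mathrm it\lambda))$, while $\boldsymbol\gamma=1$ contributes $\mathfrak{C}=c'(\boldsymbol n)$, $f=1$ and $I=-\tfrac{\mathrm i}{2\lambda}(\exp(\mathrm it\lambda)-\exp(-\mathrm it\lambda))$; summing the two reproduces exactly $c_0(t,\boldsymbol n)$ from \eqref{Eb}, so $d_1=c_0$.

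For the inductive step, assume $d_{k-1}(t,\boldsymbol n)=c_{k-2}(t,\boldsymbol n)$ for every $\boldsymbol n$ and every $t$. I would split the outer sum in \eqref{tp} according to the decomposition $\mathscr{D}^{(k)}=\mathscr{D}^{(1)}\cup\bigl(\mathscr{D}^{(k-1)}\times\mathscr{D}^{(k-1)}\bigr)$ from \eqref{nne}. The contribution of the two ``leaf'' branches $\boldsymbol\gamma\in\mathscr{D}^{(1)}$ is computed exactly as in the base case (the $I$-clauses for $\boldsymbol\gamma=0,1$ do not depend on $k$) and yields the linear terms $\tfrac12 c(\boldsymbol n)(\exp(\mathrm it\lambda)+\exp(-\mathrm it\lambda))-\tfrac{\mathrm i}{2\lambda}c'(\boldsymbol n)(\exp(\mathrm it\lambda)-\exp(-\mathrm it\lambda))$, matching the first line of \eqref{Ec}. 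It therefore remains to show that the sum over the product branches $\boldsymbol\gamma=(\boldsymbol\gamma_1^{(k-1)},\boldsymbol\gamma_2^{(k-1)})$ equals the nonlinear integral term in \eqref{Ec}.

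This is the crux of the argument. For a product branch $\boldsymbol\gamma=(\boldsymbol\gamma_1^{(k-1)},\boldsymbol\gamma_2^{(k-1)})$ and $\boldsymbol m^{(k)}=(\boldsymbol m_1^{(k-1)},\boldsymbol m_2^{(k-1)})$, the recursive clauses of \eqref{tre}, \eqref{es2} and \eqref{g1} give
\begin{align*}
\mathfrak{C}^{(k,\boldsymbol\gamma)}(\boldsymbol m^{(k)})&=\mathfrak{C}^{(k-1,\boldsymbol\gamma_1^{(k-1)})}(\boldsymbol m_1^{(k-1)})\,\mathfrak{C}^{(k-1,\boldsymbol\gamma_2^{(k-1)})}(\boldsymbol m_2^{(k-1)}),\\
f^{(k,\boldsymbol\gamma)}(\boldsymbol m^{(k)})&=\frac{-\mathrm i\,\mu(\boldsymbol m^{(k)})\cdot\boldsymbol\omega}{2\sqrt{1+(\mu(\boldsymbol m^{(k)})\cdot\boldsymbol\omega)^2}}\,f^{(k-1,\boldsymbol\gamma_1^{(k-1)})}(\boldsymbol m_1^{(k-1)})\,f^{(k-1,\boldsymbol\gamma_2^{(k-1)})}(\boldsymbol m_2^{(k-1)}),\\
I^{(k,\boldsymbol\gamma)}(t,\boldsymbol m^{(k)})&=\int_0^t\bigl(\exp(\mathrm i(\tau-t)\lambda_{\boldsymbol m^{(k)}})-\exp(\mathrm i(t-\tau)\lambda_{\boldsymbol m^{(k)}})\bigr)I^{(k-1,\boldsymbol\gamma_1^{(k-1)})}(\tau,\boldsymbol m_1^{(k-1)})\,I^{(k-1,\boldsymbol\gamma_2^{(k-1)})}(\tau,\boldsymbol m_2^{(k-1)})\,\mathrm d\tau.
\end{align*}
The two decisive observations are that $\mu$ is additive under concatenation, so $\mu(\boldsymbol m^{(k)})=\mu(\boldsymbol m_1^{(k-1)})+\mu(\boldsymbol m_2^{(k-1)})$ by \eqref{hel}, and that both the scalar prefactor appearing in $f^{(k,\boldsymbol\gamma)}$ and the outer kernel depend on $\boldsymbol m^{(k)}$ only through $\mu(\boldsymbol m^{(k)})=\boldsymbol n$; in particular $\lambda_{\boldsymbol m^{(k)}}=\lambda$. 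Hence I may factor these out of the inner summation and rewrite the constraint $\mu(\boldsymbol m^{(k)})=\boldsymbol n$ as $\boldsymbol p_1+\boldsymbol p_2=\boldsymbol n$ with $\boldsymbol p_i=\mu(\boldsymbol m_i^{(k-1)})$, splitting the double sum over branches and lattice points into a discrete convolution of two independent sums. For $\tau$ frozen inside the integral, each inner sum is exactly $d_{k-1}(\tau,\boldsymbol p_i)=c_{k-2}(\tau,\boldsymbol p_i)$ by the inductive hypothesis. Collecting everything, the product-branch contribution becomes
\begin{align*}
-\frac{\mathrm i\,\boldsymbol n\cdot\boldsymbol\omega}{2\sqrt{1+(\boldsymbol n\cdot\boldsymbol\omega)^2}}\int_0^t\bigl(\exp(\mathrm i(\tau-t)\lambda)-\exp(\mathrm i(t-\tau)\lambda)\bigr)\sum_{\boldsymbol p_1+\boldsymbol p_2=\boldsymbol n}c_{k-2}(\tau,\boldsymbol p_1)\,c_{k-2}(\tau,\boldsymbol p_2)\,\mathrm d\tau,
\end{align*}
which is precisely the nonlinear term of $c_{k-1}(t,\boldsymbol n)$ obtained from \eqref{Ec} upon replacing $k$ by $k-1$. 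Adding the linear part identified above yields $d_k=c_{k-1}$, closing the induction.

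The main obstacle I anticipate is the bookkeeping in this factorization step: one must verify carefully that the recursively-defined weights and kernels separate multiplicatively and that the constrained sum over $\mathfrak{M}^{(k,\boldsymbol\gamma)}$ genuinely factors as a convolution over $\mathfrak{M}^{(k-1,\boldsymbol\gamma_1^{(k-1)})}\times\mathfrak{M}^{(k-1,\boldsymbol\gamma_2^{(k-1)})}$, which rests entirely on the additivity of $\mu$ and on the prefactor and kernel seeing only the total index $\boldsymbol n$. A secondary technical point is the transition at $k=2$: there the product-branch clause must be reconciled with the four explicit leaf-formulas for $\boldsymbol\gamma\in\{(0,0),(0,1),(1,0),(1,1)\}$ in \eqref{g1}, which one checks by substituting the $k=1$ expressions $I^{(1,0)}$ and $I^{(1,1)}$ into the recursive integral and expanding the product $c_0(\tau,\boldsymbol m_1)c_0(\tau,\boldsymbol m_2)$ term by term. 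Once these two points are in place, the remaining manipulations are routine.
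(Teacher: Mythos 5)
Your proposal is correct and follows essentially the same route as the paper: induction on $k$, splitting $\mathscr{D}^{(k)}=\mathscr{D}^{(1)}\cup\left(\mathscr{D}^{(k-1)}\times\mathscr{D}^{(k-1)}\right)$, with the leaf branches reproducing the linear terms and the product branches factoring---via additivity of $\mu$ and the recursive clauses of \eqref{tre}, \eqref{es2}, \eqref{g1}---into the discrete convolution $\sum_{\boldsymbol m_1+\boldsymbol m_2=\boldsymbol n} d_{k-1}(\tau,\boldsymbol m_1)d_{k-1}(\tau,\boldsymbol m_2)$, which the inductive hypothesis identifies with the nonlinear term of \eqref{Ec}. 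If anything you are slightly more careful than the paper, whose inductive step is stated only for $\ell\geq 2$ (hence $k\geq 3$), leaving implicit the reconciliation of the explicit $k=2$ clauses with the recursion---precisely the point you flag as needing a direct check.
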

\begin{proof}
In view of definitions \eqref{Eb}--\eqref{Ec} and \eqref{nne}--\eqref{g1}, we will prove the lemma by an inductive argument.

For $k=1$, it follows that for $\boldsymbol n\in\mathbb{Z}^{\nu}$,
\begin{align}\label{tpq}
d_{1}(t,\boldsymbol n)=&\sum\limits_{\boldsymbol\gamma\in\mathscr{D}^{(1)}}\sum\limits_{\stackrel{\boldsymbol m^{(1)}\in\mathfrak{M}^{(1,\boldsymbol\gamma)}}{\mu(\boldsymbol m^{(1)})=\boldsymbol n}} \mathfrak{C}^{(1,\boldsymbol\gamma)}(\boldsymbol m^{(1)})f^{(1,\boldsymbol\gamma)}(\boldsymbol m^{(1)})I^{(1,\boldsymbol\gamma)}(t,\boldsymbol m^{(1)})\nonumber\\
=&\sum\limits_{\mu(\boldsymbol m^{(1)})=\boldsymbol n}\mathfrak{C}^{(1,0)}(\boldsymbol m^{(1)})f^{(1,0)}(\boldsymbol m^{(1)})I^{(1,0)}(t,\boldsymbol m^{(1)})\nonumber\\
  &+ \sum\limits_{\mu(\boldsymbol m^{(1)})=\boldsymbol n}\mathfrak{C}^{(1,1)}(\boldsymbol m^{(1)})f^{(1,1)}(\boldsymbol m^{(1)})I^{(1,1)}(t,\boldsymbol m^{(1)})\nonumber\\
=&\frac{1}{2}c(\boldsymbol n)\left(\exp\left( {\rm i}t\lambda\right)+\exp \left(-{\rm i}t\lambda\right)\right)
-\frac{{\rm i}}{2\lambda}c'(\boldsymbol n)\left(\exp\left( {\rm i}t\lambda\right)-\exp\left(-{\rm i}t\lambda\right)\right)\nonumber\\
=&c_{0}(t,\boldsymbol n).
\end{align}
It is clear that \eqref{tp} holds for $k=1$.

Suppose that \eqref{tp} could hold for $k=\ell$, with $\ell\in\mathbb{N}$ and $\ell\geq2$. For $k=\ell+1$, one has
\begin{align}\label{gll}
d_{\ell+1}(t,\boldsymbol n)=&\sum\limits_{\boldsymbol\gamma\in\mathscr{D}^{(\ell+1)}}\sum\limits_{\stackrel{\boldsymbol m^{(\ell+1)}\in\mathfrak{M}^{(\ell+1,\boldsymbol\gamma)}}{\mu(\boldsymbol m^{(\ell+1)})=\boldsymbol n}} \mathfrak{C}^{(\ell+1,\boldsymbol\gamma)}(\boldsymbol m^{(\ell+1)})f^{(\ell+1,\boldsymbol\gamma)}(\boldsymbol m^{(\ell+1)})I^{(\ell+1,\boldsymbol\gamma)}(t,\boldsymbol m^{(\ell+1)})\nonumber\\
=&\sum\limits_{\boldsymbol\gamma\in\mathscr{D}^{(1)}}\sum\limits_{\stackrel{\boldsymbol m^{(\ell+1)}\in\mathfrak{M}^{(\ell+1,\boldsymbol\gamma)}}{\mu(\boldsymbol m^{(\ell+1)})=\boldsymbol n}}\mathfrak{C}^{(\ell+1,\boldsymbol\gamma)}(\boldsymbol m^{(\ell+1)})f^{(\ell+1,\boldsymbol\gamma)}(\boldsymbol m^{(\ell+1)})I^{(\ell+1,\boldsymbol\gamma)}(t,\boldsymbol m^{(\ell+1)})\nonumber\\
&+\sum\limits_{\boldsymbol\gamma\in\mathscr{D}^{(\ell)}\times\mathscr{D}^{(\ell)}}\sum\limits_{\stackrel{\boldsymbol m^{(\ell+1)}\in\mathfrak{M}^{(\ell+1,\boldsymbol\gamma)}}{\mu(\boldsymbol m^{(\ell+1)})=\boldsymbol n}}\mathfrak{C}^{(\ell+1,\boldsymbol\gamma)}(\boldsymbol m^{(\ell+1)})f^{(\ell+1,\boldsymbol\gamma)}(\boldsymbol m^{(\ell+1)})I^{(\ell+1,\boldsymbol\gamma)}(t,\boldsymbol m^{(\ell+1)})\nonumber\\
=&\sum\limits_{\boldsymbol\gamma\in\mathscr{D}^{(1)}}\sum\limits_{\stackrel{\boldsymbol m^{(1)}\in\mathfrak{M}^{(1,\boldsymbol\gamma)}}{\mu(\boldsymbol m^{(1)})=\boldsymbol n}}\mathfrak{C}^{(1,\boldsymbol\gamma)}(\boldsymbol m^{(1)})f^{(1,\boldsymbol\gamma)}(\boldsymbol m^{(1)})I^{(1,\boldsymbol\gamma)}(t,\boldsymbol m^{(1)})\nonumber\\
&-\frac{\rm i\boldsymbol n\cdot\boldsymbol \omega}{2\sqrt{1+(\boldsymbol n\cdot\boldsymbol \omega)^{2}}}\int_{0}^{t}\left(\exp\left({\rm i}(\tau-t)\lambda\right)
-\exp\left({\rm i}(t-\tau)\lambda\right)\right)\nonumber\\
&\times\sum_{\stackrel{\boldsymbol m_{1},\boldsymbol m_{2}\in \mathbb{Z}^{\nu}}{\boldsymbol m_{1}+\boldsymbol m_{2}=\boldsymbol n}}\Bigg(\sum\limits_{\boldsymbol\gamma_{1}^{(\ell)}\in\mathscr{D}^{(\ell)}}\sum\limits_{\stackrel{\boldsymbol m_{1}^{(\ell)}\in\mathfrak{M}^{(\ell,\boldsymbol\gamma_{1}^{(\ell)})}}{\mu(\boldsymbol m_{1}^{(\ell)})=\boldsymbol m_{1}}}\mathfrak{C}^{(\ell,\boldsymbol\gamma_{1}^{(\ell)})}(\boldsymbol m_{1}^{(\ell)})f^{(\ell+1,\boldsymbol\gamma)}(\boldsymbol m_{1}^{(\ell)})
I^{(\ell+1,\boldsymbol\gamma_{1}^{(\ell)})}(\tau,\boldsymbol m_{1}^{(\ell)})\Bigg)\nonumber\\
&~\qquad\qquad\times\Bigg(\sum\limits_{\boldsymbol\gamma_{2}^{(\ell)}\in\mathscr{D}^{(\ell)}}\sum\limits_{\stackrel{\boldsymbol m_{2}^{(\ell)}\in\mathfrak{M}^{(\ell,\boldsymbol\gamma_{2}^{(\ell)})}}{\mu(\boldsymbol m_{2}^{(\ell)})=\boldsymbol m_{2}}}\mathfrak{C}^{(\ell,\boldsymbol\gamma_{2}^{(\ell)})}(\boldsymbol m_{2}^{(\ell)})
f^{(\ell+1,\boldsymbol\gamma_{2}^{(\ell)})}(\boldsymbol m_{2}^{(\ell)})I^{(\ell+1,\boldsymbol\gamma_{2}^{(\ell)})}(\tau,\boldsymbol m_{2}^{(\ell)})\Bigg){\rm d}\tau\nonumber\\
=&\frac{1}{2}c(\boldsymbol n)\left(\exp\left( {\rm i}t\lambda\right)+\exp \left(-{\rm i}t\lambda\right)\right)
 -\frac{{\rm i}}{2\lambda}c'(\boldsymbol n)\left(\exp\left( {\rm i}t\lambda\right)-\exp\left(-{\rm i}t\lambda\right)\right)\nonumber\\
&-\frac{{\rm i}\boldsymbol n\cdot\boldsymbol \omega}{2\sqrt{1+(\boldsymbol n\cdot\boldsymbol \omega)^{2}}}\int_{0}^{t}\left(\exp\left({\rm i}(\tau-t)\lambda\right)
-\exp\left({\rm i}(t-\tau)\lambda\right)\right)\nonumber\\
&\qquad\qquad\qquad\qquad\qquad\times\sum_{\stackrel{\boldsymbol m_{1},\boldsymbol m_{2}\in \mathbb{Z}^{\nu}}{\boldsymbol m_{1}+\boldsymbol m_{2}=\boldsymbol n}}d_{\ell}(\tau,\boldsymbol m_{1})d_{\ell}(\tau,\boldsymbol m_{2}){\rm d}\tau\nonumber\\
=&c_{\ell}(t,\boldsymbol n),\quad\boldsymbol n\in\mathbb{Z}^{\nu}.
\end{align}
We complete the proof of Lemma \ref{tpl}.
\end{proof}

By Lemma \ref{tpl}, if we want to prove the absolute and uniform convergence of the sequence $\{c_{k}(t,\boldsymbol n)\}$, we just consider the absolute and uniform convergence of the sequence $\{d_{k}(t,\boldsymbol n)\}$ given by \eqref{tp}. Equivalently, we may verify that the series $\sum^{\infty}_{k=1}\left(d_{k+1}(t,\boldsymbol n)-d_{k}(t,\boldsymbol n)\right)$  converges absolutely and uniformly on the interval $t\in[0,t_{0})$. For this, we have to give the upper bounds on $|d_{k+1}(t,\boldsymbol n)-d_{k}(t,\boldsymbol n)|,k\in\mathbb{N}_{+}$.

The term $|d_{k}(t,\boldsymbol n)|$ will be bounded from above in the following corollary.
\begin{coro}\label{C2.2}
Let $B>0$, $0<\kappa\leq1$ be two constants and $\boldsymbol\omega\in\mathbb{R}^{\nu}$. If $0\leq t\leq\frac{\kappa^{\nu}}{32B(48)^{\nu}|\boldsymbol\omega|}$, then
\begin{align*}
|d_{k}(t,\boldsymbol n)|\leq2B\exp(-\frac{\kappa|\boldsymbol n|}{4}).
\end{align*}
\end{coro}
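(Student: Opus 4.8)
The plan is to bound each summand of the tree expansion \eqref{tp} uniformly in the lattice variables, carry out the lattice sum, and then sum the resulting geometric-type series over all tree shapes. Fix $\boldsymbol\gamma\in\mathscr{D}^{(k)}$ and let $\ell=\ell(\boldsymbol\gamma)$ denote the number of leaves of $\boldsymbol\gamma$ and $s=s(\boldsymbol\gamma)$ the number of leaves of type $1$; then $\boldsymbol\gamma$ has $\ell-1$ internal nodes, $\mathfrak{M}^{(k,\boldsymbol\gamma)}\cong(\mathbb{Z}^{\nu})^{\ell}$, and $\mu(\boldsymbol m^{(k)})=\sum_{j=1}^{\ell}\boldsymbol m_{j}$. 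Two of the three factors are immediate. From the decay hypothesis \eqref{Ea}, each of the $\ell$ factors of $\mathfrak{C}^{(k,\boldsymbol\gamma)}$ contributes $Be^{-\kappa|\boldsymbol m_{j}|/2}$, with an extra $|\boldsymbol\omega|$ for each of the $s$ type-$1$ leaves, so $|\mathfrak{C}^{(k,\boldsymbol\gamma)}(\boldsymbol m^{(k)})|\le B^{\ell}|\boldsymbol\omega|^{s}\prod_{j=1}^{\ell}e^{-\kappa|\boldsymbol m_{j}|/2}$. Every internal-node factor of $f$ in \eqref{es2} is $\tfrac{-\mathrm{i}\,\mu\cdot\boldsymbol\omega}{2\sqrt{1+(\mu\cdot\boldsymbol\omega)^{2}}}$, of modulus at most $\tfrac12$; hence $|f^{(k,\boldsymbol\gamma)}(\boldsymbol m^{(k)})|\le 2^{-(\ell-1)}$.

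The crucial and most delicate factor is $I^{(k,\boldsymbol\gamma)}$. The point that removes any small-denominator difficulty is the identity $\tfrac{-\mathrm{i}}{2\lambda}\big(e^{\mathrm{i}t\lambda}-e^{-\mathrm{i}t\lambda}\big)=\tfrac{\sin(t\lambda)}{\lambda}$, whose modulus is at most $t$ uniformly in $\lambda$. Consequently a type-$1$ leaf obeys $|I^{(k,1)}(t,\cdot)|\le t$ rather than the naive $1/\lambda$, while a type-$0$ leaf obeys $|I^{(k,0)}(t,\cdot)|\le1$. At an internal node the recursive form in \eqref{g1} together with $|e^{\mathrm{i}(\tau-t)\lambda}-e^{\mathrm{i}(t-\tau)\lambda}|=2|\sin((t-\tau)\lambda)|\le2$ gives $|I^{(k,\boldsymbol\gamma)}(t,\boldsymbol m^{(k)})|\le 2\int_{0}^{t}|I(\tau,\boldsymbol m_{1}^{(k-1)})|\,|I(\tau,\boldsymbol m_{2}^{(k-1)})|\,\mathrm{d}\tau$. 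Feeding the leaf bounds into this inequality and inducting on the tree yields $|I^{(k,\boldsymbol\gamma)}(t,\boldsymbol m^{(k)})|\le 2^{\ell-1}\,t^{(\ell-1)+s}$: the exponent collects one power of $t$ for each of the $\ell-1$ integrations and one for each of the $s$ type-$1$ leaves, while the constant obeys the crude recursion $a(\boldsymbol\gamma)\le 2\,a(\boldsymbol\gamma_{1})\,a(\boldsymbol\gamma_{2})$ with $a=1$ at the leaves. This estimate is uniform in $\boldsymbol m^{(k)}$ and, importantly, has discarded every $1/\lambda$ prefactor occurring in the closed forms for $\boldsymbol\gamma=(0,1),(1,0),(1,1)$, so no Diophantine hypothesis is needed.

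Multiplying the three bounds, the constant $2^{-(\ell-1)}2^{\ell-1}=1$ cancels and a single $\boldsymbol\gamma$ contributes at most $B^{\ell}|\boldsymbol\omega|^{s}t^{(\ell-1)+s}\prod_{j}e^{-\kappa|\boldsymbol m_{j}|/2}$ per lattice point. I would then perform the lattice sum subject to $\mu(\boldsymbol m^{(k)})=\boldsymbol n$. Writing $e^{-\frac{\kappa}{2}\sum_{j}|\boldsymbol m_{j}|}\le e^{-\frac{\kappa}{4}|\boldsymbol n|}\,e^{-\frac{\kappa}{4}\sum_{j}|\boldsymbol m_{j}|}$ (by $\sum_{j}|\boldsymbol m_{j}|\ge|\boldsymbol n|$) and summing freely over the $\ell-1$ unconstrained lattice variables gives
\begin{align*}
\sum_{\substack{\boldsymbol m^{(k)}\in\mathfrak{M}^{(k,\boldsymbol\gamma)}\\ \mu(\boldsymbol m^{(k)})=\boldsymbol n}}\ \prod_{j=1}^{\ell}e^{-\frac{\kappa}{2}|\boldsymbol m_{j}|}\ \le\ e^{-\frac{\kappa}{4}|\boldsymbol n|}\,K_{\nu}^{\,\ell-1},\qquad K_{\nu}:=\sum_{\boldsymbol m\in\mathbb{Z}^{\nu}}e^{-\frac{\kappa}{4}|\boldsymbol m|}\le\Big(\tfrac{C_{0}}{\kappa}\Big)^{\nu},
\end{align*}
for an absolute constant $C_{0}$ (here $0<\kappa\le1$ is used). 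Hence the total contribution of $\boldsymbol\gamma$ is at most $B\,(BtK_{\nu})^{\ell-1}(|\boldsymbol\omega|t)^{s}e^{-\frac{\kappa}{4}|\boldsymbol n|}$.

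It remains to sum over $\boldsymbol\gamma\in\mathscr{D}^{(k)}$. Since the number of ordered full binary trees with $\ell$ leaves is the Catalan number $C_{\ell-1}\le 4^{\ell-1}$, and the type assignments with exactly $s$ ones number $\binom{\ell}{s}$, bounding the sum over $\mathscr{D}^{(k)}$ by the sum over all such trees gives
\begin{align*}
|d_{k}(t,\boldsymbol n)|\ \le\ B\,e^{-\frac{\kappa}{4}|\boldsymbol n|}\sum_{\ell\ge1}4^{\ell-1}(BtK_{\nu})^{\ell-1}\sum_{s=0}^{\ell}\binom{\ell}{s}(|\boldsymbol\omega|t)^{s}=B\,e^{-\frac{\kappa}{4}|\boldsymbol n|}\sum_{\ell\ge1}\big(4BtK_{\nu}\big)^{\ell-1}\big(1+|\boldsymbol\omega|t\big)^{\ell}.
\end{align*}
This is a convergent geometric series once $4BtK_{\nu}(1+|\boldsymbol\omega|t)<1$, and the stated bound $t\le\frac{\kappa^{\nu}}{32B(48)^{\nu}|\boldsymbol\omega|}$ is a quantitative sufficient condition that keeps both $BtK_{\nu}$ and $|\boldsymbol\omega|t$ small enough to force $(1+|\boldsymbol\omega|t)(1+8BtK_{\nu})\le2$; the series then sums to at most $2$, giving $|d_{k}(t,\boldsymbol n)|\le 2Be^{-\frac{\kappa}{4}|\boldsymbol n|}$ for every $k$. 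The main obstacle throughout is the estimate of $I^{(k,\boldsymbol\gamma)}$: one must organize the nested time integrals so that each spurious factor $1/\lambda$ is converted into a factor of $t$ via $\sin(t\lambda)/\lambda$, while simultaneously extracting a power $t^{\ell-1}$ large enough to overcome the Catalan growth $4^{\ell-1}$ of the number of trees.
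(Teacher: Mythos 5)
Your tree-by-tree estimates are essentially all correct: the bound on $\mathfrak{C}$, the observation that each internal-node factor in \eqref{es2} has modulus at most $\tfrac12$, the inductive bound $|I^{(k,\boldsymbol\gamma)}(t,\cdot)|\le 2^{\ell-1}t^{(\ell-1)+s}$ (your use of $|\sin(t\lambda)/\lambda|\le t$ at type-$1$ leaves is actually cleaner than the paper's claim $1/\lambda\le 1/|\boldsymbol\omega|$, which can fail at near-resonant $\boldsymbol n$), the lattice sum, and the Catalan count of tree shapes. The gap is the very last step: the assertion that $t\le\frac{\kappa^{\nu}}{32B(48)^{\nu}|\boldsymbol\omega|}$ forces $(1+|\boldsymbol\omega|t)(1+8BtK_{\nu})\le 2$ is false. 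That hypothesis only yields
\begin{align*}
BtK_{\nu}\le \frac{\kappa^{\nu}}{32(48)^{\nu}|\boldsymbol\omega|}\Big(\frac{24}{\kappa}\Big)^{\nu}=\frac{1}{32\cdot 2^{\nu}|\boldsymbol\omega|},
\qquad
|\boldsymbol\omega|t\le \frac{\kappa^{\nu}}{32(48)^{\nu}B},
\end{align*}
and neither right-hand side is small in general: the first blows up as $|\boldsymbol\omega|\to 0$, the second as $B\to 0$, whereas the corollary is asserted for arbitrary $B>0$ and $\boldsymbol\omega\in\mathbb{R}^{\nu}$. Concretely, take $\nu=2$, $\kappa=B=1$, $|\boldsymbol\omega|=10^{-4}$ and $t$ at the top of the allowed range ($t\approx 0.136$); then $K_{\nu}\approx 65$ and $4BtK_{\nu}\approx 35$, so your majorant series $\sum_{\ell\ge1}(4BtK_{\nu})^{\ell-1}(1+|\boldsymbol\omega|t)^{\ell}$ diverges and the argument proves nothing in this regime, although the statement itself remains true there.

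The root cause is the decision to bound each internal-node factor of $f$ by $\tfrac12$. Since the admissible time horizon scales like $(B|\boldsymbol\omega|)^{-1}$, every internal node, whose time integration costs a factor of order $t$, must be paid for by one factor of $B$ (your extra leaf per internal node does supply this) \emph{and} one factor of $|\boldsymbol\omega|$; the only possible source of that $|\boldsymbol\omega|$ is the numerator $\mu\cdot\boldsymbol\omega$ of the $f$-factor, which your bound $\bigl|\frac{\mu\cdot\boldsymbol\omega}{2\sqrt{1+(\mu\cdot\boldsymbol\omega)^{2}}}\bigr|\le\tfrac12$ throws away. This is precisely why the paper instead uses $|f|\le\frac{|\boldsymbol\omega||\mu|}{2}$ at every internal node (Lemma \ref{lem2.5}), accepts the resulting polynomial growth $\mathfrak{P}(\boldsymbol m^{(k)})$ in the lattice variables, and then needs the entire $\mathscr{A}^{(k,\boldsymbol\gamma)}$, $\boldsymbol\alpha!$, $\mathfrak{F}$ apparatus of Lemmas \ref{lem2.8}, \ref{lem2.10} and \ref{L6} to absorb those polynomial factors into the exponential decay; with that accounting the expansion parameter becomes $2B|\boldsymbol\omega|t(48/\kappa)^{\nu}\le\tfrac1{16}$, uniformly in $B$ and $|\boldsymbol\omega|$. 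That machinery is not decorative: it is what makes the time horizon $\sim\kappa^{\nu}/(B|\boldsymbol\omega|)$ attainable. As written, your proof establishes the corollary only under additional hypotheses of the type $|\boldsymbol\omega|\gtrsim 2^{-\nu}$ and $B\gtrsim \kappa^{\nu}48^{-\nu}$; removing them requires reinstating the $|\boldsymbol\omega||\mu|$ bound and controlling the $|\mu|$ factors, i.e.\ essentially returning to the paper's combinatorial argument.
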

\begin{proof}
The proof will be  divided into the following three steps.

\textbf{Step 1:}\quad Formula \eqref{tp} shows that $d_{k}(t,\boldsymbol n)$ is consisted of the functions  $\mathfrak{C}^{(k,\boldsymbol\gamma)}(\boldsymbol m^{(k)})$, $f^{(k,\boldsymbol\gamma)}(\boldsymbol m^{(k)})$ and $I^{(k,\boldsymbol\gamma)}(t,\boldsymbol m^{(k)})$. Thus we have to introduce the following functions for estimating  the above-mentioned functions.

Let us define
\begin{align}\label{slp}
\ \sigma(\boldsymbol\gamma)= \left\{ \begin{aligned}
&1&\text{if}~&\boldsymbol\gamma=0~\text{or}~1\in\mathscr{D}^{(k)},\\
&2 &\text{if}~&\boldsymbol\gamma\in\mathscr{D}^{(2)},\boldsymbol\gamma=(0,0)~\text{or}~(0,1)~\text{or}~(1,0)~\text{or}~(1,1),\\
&\sigma(\boldsymbol\gamma_{1}^{(k-1)})+\sigma(\boldsymbol\gamma_{2}^{(k-1)})
&\text{if}~&\boldsymbol\gamma\in\mathscr{D}^{(k)},k\geq3,\\
&~&&\boldsymbol\gamma=(\boldsymbol\gamma_{1}^{(k-1)},\boldsymbol\gamma_{2}^{(k-1)})\in\mathscr{D}^{(k-1)}\times\mathscr{D}^{(k-1)},
\end{aligned}\right.
\end{align}

\begin{align} \label{lse}
\ \ell(\boldsymbol\gamma)= \left\{ \begin{aligned}
&0&\text{if}~&\boldsymbol\gamma=0~\text{or}~1\in\mathscr{D}^{(k)},\\
&1 &\text{if}~&\boldsymbol\gamma\in\mathscr{D}^{(2)},\boldsymbol\gamma=(0,0)~\text{or}~(0,1)~\text{or}~(1,0)~\text{or}~(1,1),\\
&\ell(\boldsymbol\gamma_{1}^{(k-1)})+\ell(\boldsymbol\gamma_{2}^{(k-1)})+1
&\text{if}~&\boldsymbol\gamma\in\mathscr{D}^{(k)},k\geq3,\\
&~&&\boldsymbol\gamma=(\boldsymbol\gamma_{1}^{(k-1)},\boldsymbol\gamma_{2}^{(k-1)})\in\mathscr{D}^{(k-1)}\times\mathscr{D}^{(k-1)},
\end{aligned}\right.
\end{align}

\begin{align}\label{gt}
|\boldsymbol m^{(k)}|= \left\{ \begin{aligned}
&|\boldsymbol m|&\text{if}~&\boldsymbol\gamma=0~\text{or}~1\in\mathscr{D}^{(k)},\boldsymbol m^{(k)}=\boldsymbol m\in \mathbb Z^{\nu},\\
&|\boldsymbol m_{1}|+|\boldsymbol m_{2}| &\text{if}~&\boldsymbol\gamma\in\mathscr{D}^{(2)},\boldsymbol\gamma=(0,0)~\text{or}~(0,1)~\text{or}~(1,0)~\text{or}~(1,1),\\ &~&&\boldsymbol m^{(2)} = (\boldsymbol m_{1},\boldsymbol m_{2}),\\
&|\boldsymbol m_{1}^{(k-1)}|+|\boldsymbol m_{2}^{(k-1)}|
&\text{if}~&\boldsymbol\gamma\in\mathscr{D}^{(k)}, k\geq3, \\
&~&&\boldsymbol\gamma=(\boldsymbol\gamma_{1}^{(k-1)},\boldsymbol\gamma_{2}^{(k-1)})\in\mathscr{D}^{(k-1)}\times\mathscr{D}^{(k-1)},\\
&~&&\boldsymbol m^{(k)}=(\boldsymbol m_{1}^{(k-1)},\boldsymbol m_{2}^{(k-1)})\\
&~&&\in \mathfrak{M}^{(k-1,\boldsymbol\gamma_{1}^{(k-1)})}\times \mathfrak{M}^{(k-1,\boldsymbol\gamma_{2}^{(k-1)})},\\
\end{aligned}\right.
\end{align}

\begin{align}\label{bp}
\ \mathfrak{P}(\boldsymbol m^{(k)})= \left\{ \begin{aligned}
&1&\text{if}~&\boldsymbol m^{(k)}\in\mathfrak{M}^{(k,\boldsymbol\gamma)},\boldsymbol\gamma=0~\text{or}~1\in\mathscr{D}^{(k)},\\
&|\mu(\boldsymbol m^{(2)}| &\text{if}~&k=2, \boldsymbol m^{(k)}\in\mathfrak{M}^{(k,\boldsymbol\gamma)},\boldsymbol\gamma\in\mathscr{D}^{(2)},\\
&|\mu(\boldsymbol m^{(k)})| \mathfrak{P}(\boldsymbol m_{1}^{(k-1)}) \mathfrak{P}(\boldsymbol m_{2}^{(k-1)})
&\text{if}~&\boldsymbol m^{(k)}\in\mathfrak{M}^{(k,\boldsymbol\gamma)}, k\geq3,\\
&~&&\boldsymbol\gamma=(\boldsymbol\gamma_{1}^{(k-1)},\boldsymbol\gamma_{2}^{(k-1)})\in\mathscr{D}^{(k-1)}\times\mathscr{D}^{(k-1)},\\
&~&&\boldsymbol m^{(k)}=(\boldsymbol m_{1}^{(k-1)},\boldsymbol m_{2}^{(k-1)})\\
&~&&\in \mathfrak{M}^{(k-1,\boldsymbol\gamma_{1}^{(k-1)})}\times \mathfrak{M}^{(k-1,\boldsymbol\gamma_{2}^{(k-1)})},\\
\end{aligned}\right.
\end{align}

\begin{align}\label{to}
\ \hbar(\boldsymbol\gamma)= \left\{ \begin{aligned}
&0&\text{if}~&\boldsymbol\gamma=0\in\mathscr{D}^{(k)},\\
&1&\text{if}~&\boldsymbol\gamma=1\in\mathscr{D}^{(k)},\\
&0&\text{if}~&\boldsymbol\gamma=(0,0)\in\mathscr{D}^{(2)},\\
&1&\text{if}~&\boldsymbol\gamma=(0,1)\in\mathscr{D}^{(2)},\\
&1&\text{if}~&\boldsymbol\gamma=(1,0)\in\mathscr{D}^{(2)},\\
&2&\text{if}~&\boldsymbol\gamma=(1,1)\in\mathscr{D}^{(2)},\\
&\hbar(\boldsymbol\gamma_{1}^{(k-1)})+\hbar(\boldsymbol\gamma_{2}^{(k-1)})
&\text{if}~&k\geq3,\boldsymbol\gamma=(\boldsymbol\gamma_{1}^{(k-1)},\boldsymbol\gamma_{2}^{(k-1)})\in\mathscr{D}^{(k-1)}\times\mathscr{D}^{(k-1)},\\
\end{aligned}\right.
\end{align}
and
\begin{align}\label{op}
\ \mathfrak{F}(\boldsymbol\gamma)= \left\{ \begin{aligned}
&1&\text{if}~&\boldsymbol\gamma=0~\text{or}~1\in\mathscr{D}^{(k)}\\
&~&&\boldsymbol\gamma=(0,0)~\text{or}~(0,1)~\text{or}~(1,0)~\text{or}~(1,1)\in\mathscr{D}^{(2)},\\
&\ell(\boldsymbol\gamma)\mathfrak{F}(\boldsymbol\gamma_{1}^{(k-1)})\mathfrak{F}(\boldsymbol\gamma_{2}^{(k-1)}) &\text{if}~&k\geq3,\boldsymbol\gamma=(\boldsymbol\gamma_{1}^{(k-1)},\boldsymbol\gamma_{2}^{(k-1)})\in\mathscr{D}^{(k-1)}\times\mathscr{D}^{(k-1)}.
\end{aligned}\right.
\end{align}

In the following lemma, we will give the upper bounds on the functions $\mathfrak{C}^{(k,\boldsymbol\gamma)}(\boldsymbol m^{(k)})$, $f^{(k,\boldsymbol\gamma)}(\boldsymbol m^{(k)})$ and $I^{(k,\boldsymbol\gamma)}(t,\boldsymbol m^{(k)})$.
\begin{lemm}\label{lem2.5}
Let $\boldsymbol m^{(k)}\in\mathfrak{M}^{(k,\boldsymbol \gamma)}$. One has

$\mathrm{(\uppercase\expandafter{\romannumeral1})}$
\begin{align*}
|\mathfrak{C}(\boldsymbol m^{(k)})|\leq B^{\sigma(\boldsymbol\gamma)}|\boldsymbol\omega|^{\hbar(\boldsymbol\gamma)}\exp(-\frac{\kappa|\boldsymbol m^{(k)}|}{2}).
\end{align*}

$\mathrm{(\uppercase\expandafter{\romannumeral2})}$
\begin{align*}
& |f(\boldsymbol m^{(1)})|= 1 \quad \text{\rm if}~\boldsymbol\gamma=0~\text{\rm or}~1\in\mathscr{D}^{(1)},\boldsymbol m^{(1)}\in\mathfrak{M}^{(1,\boldsymbol\gamma)}, \nonumber\\
& |f(\boldsymbol m^{(2)})|\leq |\boldsymbol\omega||\mu(\boldsymbol m^{(2)})| \quad \text{\rm if}~\boldsymbol\gamma\in\mathscr{D}^{(2)},\boldsymbol\gamma=(0,0)~\text{\rm or}~(0,1)~\text{\rm or}~(1,0)~\text{\rm or}~(1,1),\boldsymbol m^{(2)}\in\mathfrak{M}^{(2,\boldsymbol\gamma)},\nonumber\\
& |f(\boldsymbol m^{(k)})|= 1  \quad \text{\rm if}~\boldsymbol\gamma=0~\text{\rm or}~1\in\mathscr{D}^{(k)},k\geq3,\boldsymbol m^{(k)}\in\mathfrak{M}^{(k,\boldsymbol\gamma)} ,\nonumber\\
& |f(\boldsymbol m^{(k)})|\leq |\boldsymbol\omega|^{\ell(\boldsymbol\gamma)}\mathfrak{P}(\boldsymbol m^{(k)}) \quad \text{\rm if}~\boldsymbol\gamma=(\boldsymbol\gamma_{1}^{(k-1)},\boldsymbol\gamma_{2}^{(k-1)})\in\mathscr{D}^{(k-1)}\times\mathscr{D}^{(k-1)},k\geq3,\boldsymbol m^{(k)}\in\mathfrak{M}^{(k,\boldsymbol\gamma)}.
\end{align*}

$\mathrm{(\uppercase\expandafter{\romannumeral3})}$
\begin{align*}
&|I(t,\boldsymbol m^{(1)})|\leq 1 \quad  \text{\rm if}~\boldsymbol\gamma\in\mathscr{D}^{(1)},\boldsymbol\gamma=0,\boldsymbol m^{(1)}\in\mathfrak{M}^{(1,\boldsymbol\gamma)}, \\
&|I(t,\boldsymbol m^{(1)})|\leq \frac{1}{|\boldsymbol\omega|}   \quad  \text{\rm if}~\boldsymbol\gamma\in\mathscr{D}^{(1)},\boldsymbol\gamma=1,\boldsymbol m^{(1)}\in\mathfrak{M}^{(1,\gamma)},\\
&|I(t,\boldsymbol m^{(2)})|\leq 2t \quad \text{\rm if}~\boldsymbol\gamma\in\mathscr{D}^{(2)},\boldsymbol\gamma=(0,0),\boldsymbol m^{(2)}=(\boldsymbol m_{1},\boldsymbol m_{2})\in\mathfrak{M}^{(2,\boldsymbol\gamma)},\\
&|I(t,\boldsymbol m^{(2)})|\leq \frac{2t}{|\boldsymbol\omega|} \quad\text{\rm if}~\boldsymbol\gamma\in\mathscr{D}^{(2)},\boldsymbol\gamma=(0,1),\boldsymbol m^{(2)}=(\boldsymbol m_{1},\boldsymbol m_{2})\in\mathfrak{M}^{(2,\boldsymbol\gamma)},\\
&|I(t,\boldsymbol m^{(2)})|\leq \frac{2t}{|\boldsymbol\omega|} \quad \text{\rm if}~\boldsymbol\gamma\in\mathscr{D}^{(2)},\boldsymbol\gamma=(1,0),\boldsymbol m^{(2)}=(\boldsymbol m_{1},\boldsymbol m_{2})\in\mathfrak{M}^{(2,\boldsymbol\gamma)},\\
&|I(t,\boldsymbol m^{(2)})|\leq \frac{2t}{|\boldsymbol\omega|^{2}} \quad\text{\rm if}~\boldsymbol\gamma\in\mathscr{D}^{(2)},\boldsymbol\gamma=(1,1),\boldsymbol m^{(2)}=(\boldsymbol m_{1},\boldsymbol m_{2})\in\mathfrak{M}^{(2,\boldsymbol\gamma)},\\
&|I(t,\boldsymbol m^{(k)})|\leq 1 \quad\text{\rm if}~\boldsymbol\gamma=0\in\mathscr{D}^{(k)},k\geq3,\boldsymbol\gamma=0,\boldsymbol m^{(k)}\in\mathfrak{M}^{(k,\boldsymbol\gamma)}, \\
&|I(t,\boldsymbol m^{(k)})|\leq \frac{1}{|\boldsymbol\omega|}  \quad\text{\rm if}~\boldsymbol\gamma=1\in\mathscr{D}^{(k)},k\geq3,\boldsymbol\gamma=1,\boldsymbol m^{(k)}\in\mathfrak{M}^{(k,\boldsymbol\gamma)}, \\
&|I(t,\boldsymbol m^{(k)})|\leq \frac{(2t)^{\ell(\boldsymbol\gamma)}}{|\boldsymbol\omega|^{\hbar(\boldsymbol\gamma)}\mathfrak{F}{(\boldsymbol\gamma)}} \quad \text{\rm if}~\boldsymbol\gamma=(\boldsymbol\gamma_{1}^{(k-1)},\boldsymbol\gamma_{2}^{(k-1)})\in\mathscr{D}^{(k-1)}\times\mathscr{D}^{(k-1)},k\geq3,\boldsymbol m^{(k)}\in\mathfrak{M}^{(k,\boldsymbol\gamma)}.
\end{align*}
\end{lemm}
\begin{proof}
(\uppercase\expandafter{\romannumeral1}) The proof  is based on  \eqref{nne}--\eqref{tre}, \eqref{slp}, \eqref{gt}, \eqref{to} and the decay assumption \eqref{Ea}. Let us consider the following three cases.

\underline{\emph{Case 1}}: $k=1$. For $\boldsymbol m^{(1)}=\boldsymbol m\in\mathfrak{M}^{(1,\boldsymbol\gamma)},$  it follows that
\begin{align*}
&|\mathfrak{C}(\boldsymbol m^{(1)})|=|c(\boldsymbol m)|\leq B\exp(-\frac{\kappa|\boldsymbol m|}{2})\quad\text{if} ~\boldsymbol\gamma=0\in\mathscr{D}^{(1)},\\
&|\mathfrak{C}(\boldsymbol m^{(1)})|=|c^{\prime}(\boldsymbol m)|\leq B|\boldsymbol \omega|\exp(-\frac{\kappa|\boldsymbol m|}{2})\quad\text{if}~\boldsymbol\gamma=1\in\mathscr{D}^{(1)}.
\end{align*}

\underline{\emph{Case 2}}: $k=2$.  If $\boldsymbol m^{(2)}=(\boldsymbol m_{1},\boldsymbol m_{2})\in\mathfrak{M}^{(2,\boldsymbol\gamma)}, \boldsymbol\gamma=0~\text{or}~1\in\mathscr{D}^{(2)},$ then we have the same estimations as the case $k=1$. Moreover,
\begin{align*}
|\mathfrak{C}(\boldsymbol m^{(2)})|=&|c(\boldsymbol m_{1})||c(\boldsymbol m_{2})|
\leq B\exp(-\frac{\kappa|\boldsymbol m_{1}|}{2})B\exp(-\frac{\kappa|\boldsymbol m_{2}|}{2})\\
\leq& B^{2}\exp(-\frac{\kappa|\boldsymbol m^{(2)}|}{2}) \quad \text{if}~\boldsymbol\gamma=(0,0)\in\mathscr{D}^{(2)},\\
|\mathfrak{C}(\boldsymbol m^{(2)})|=&|c(\boldsymbol m_{1})||c'(\boldsymbol m_{2})|
\leq B\exp(-\frac{\kappa|\boldsymbol m_{1}|}{2})B|\boldsymbol \omega|\exp(-\frac{\kappa|\boldsymbol m_{2}|}{2})\\
\leq& B^{2}|\boldsymbol \omega|\exp(-\frac{\kappa|\boldsymbol m^{(2)}|}{2}) \quad \text{if}~ \boldsymbol\gamma=(0,1)\in\mathscr{D}^{(2)},\\
|\mathfrak{C}(\boldsymbol m^{(2)})|=&|c'(\boldsymbol m_{1})||c(\boldsymbol m_{2})|
\leq B|\boldsymbol \omega|\exp(-\frac{\kappa|\boldsymbol m_{1}|}{2}) B\exp(-\frac{\kappa|\boldsymbol m_{2}|}{2})\\
\leq& B^{2}|\boldsymbol \omega|\exp(-\frac{\kappa|\boldsymbol m^{(2)}|}{2})\quad \text{if}~ \boldsymbol\gamma=(1,0)\in\mathscr{D}^{(2)},\\
|\mathfrak{C}(\boldsymbol m^{(2)})|=&|c'(\boldsymbol m_{1})||c'(\boldsymbol m_{2})|
\leq B|\boldsymbol \omega|\exp(-\frac{\kappa|\boldsymbol m_{1}|}{2})B|\boldsymbol \omega|\exp(-\frac{\kappa|\boldsymbol m_{2}|}{2})\\
\leq& B^{2}|\boldsymbol \omega|^{2}\exp(-\frac{\kappa|\boldsymbol m^{(2)}|}{2}) \quad \text{if}~ \boldsymbol\gamma=(1,1)\in\mathscr{D}^{(2)}.
\end{align*}

\underline{\emph{Case 3}}: $k\geq3$. If $\boldsymbol\gamma=0~\text{or}~1~\text{or}~(0,0)~\text{or}~(0,1)~\text{or}~(1,0)~\text{or}~(1,1)\in\mathscr{D}^{(k)}$, then  the upper bounds of $|\mathfrak{C}(\boldsymbol m^{(k)})|$ are the same as the case $k=2$. Let $\boldsymbol\gamma=(\boldsymbol\gamma_{1}^{(k-1)},\boldsymbol\gamma_{2}^{(k-1)})\in\mathscr{D}^{(k-1)}\times\mathscr{D}^{(k-1)}$, $\boldsymbol m^{(k)}=(\boldsymbol m_{1}^{(k-1)},\boldsymbol m_{2}^{(k-1)})\in \mathfrak{M}^{(k-1,\boldsymbol\gamma_{1}^{(k-1)})}\times \mathfrak{M}^{(k-1,\boldsymbol\gamma_{2}^{(k-1)})}$. By a  inductive argument, we conclude
\begin{align*}
|\mathfrak{C}(\boldsymbol m^{(k)})|&=|\mathfrak{C}(\boldsymbol m_{1}^{(k-1)})||\mathfrak{C}(\boldsymbol m_{2}^{(k-1)})|\\
&\leq B^{\sigma(\boldsymbol\gamma_{1}^{(k-1)})}|\boldsymbol\omega|^{\hbar(\boldsymbol\gamma_{1}^{(k-1)})}\exp(-\frac{\kappa|\boldsymbol m_{1}^{(k-1)}|}{2}) B^{\sigma(\boldsymbol\gamma_{2}^{(k-1)})}|\boldsymbol\omega|^{\hbar(\boldsymbol\gamma_{2}^{(k-1)})}\exp(-\frac{\kappa|\boldsymbol m_{2}^{(k-1)}|}{2})\\
&=B^{\sigma(\boldsymbol\gamma_{1}^{(k-1)})+\sigma(\boldsymbol\gamma_{2}^{(k-1)})}|\boldsymbol\omega|^{\hbar(\boldsymbol\gamma_{1}^{(k-1)})+\hbar(\boldsymbol\gamma_{2}^{(k-1)})}\exp(-\frac{\kappa(|\boldsymbol m_{1}^{(k-1)}|+|\boldsymbol m_{2}^{(k-1)}|)}{2})\\
&=B_{0}^{\sigma(\boldsymbol\gamma)}|\boldsymbol\omega|^{\hbar(\boldsymbol\gamma)}\exp(-\frac{\kappa|\boldsymbol m^{(k)}|}{2}).
\end{align*}

(\uppercase\expandafter{\romannumeral2}) The terms $|f(\boldsymbol m^{(k)})|$ can be bounded from above by \eqref{es2}--\eqref{hel}, \eqref{lse} and \eqref{bp}. We consider the following three cases.

\underline{\emph{Case 1$'$}}: $k=1$. It is clear that
\begin{align*}
|f(\boldsymbol m^{(1)})|=1 \quad \text{if}~ \boldsymbol m^{(1)}\in\mathfrak{M}^{(1,\boldsymbol\gamma)}, \boldsymbol\gamma=0~\text{or}~1\in\mathscr{D}^{(1)}.
\end{align*}

\underline{\emph{Case 2$'$}}: $k=2$. For $\boldsymbol m^{(2)}\in\mathfrak{M}^{(2,\boldsymbol\gamma)}, \boldsymbol\gamma=0~\text{or}~1\in\mathscr{D}^{(2)}$, one has $|f(\boldsymbol m^{(2)})|=1$. Moreover, if $\boldsymbol m^{(2)}\in\mathfrak{M}^{(2,\boldsymbol\gamma)}, \boldsymbol\gamma=(0,0)~\text{or}~(0,1)~\text{or}~(1,0)~\text{or}~(1,1)\in\mathscr{D}^{(2)}$, then
\begin{align*}
|f(\boldsymbol m^{(2)})|=\left|-\frac{{\rm i}\mu(\boldsymbol m^{(2)})\cdot\boldsymbol\omega}{2\sqrt{1+(\mu(\boldsymbol m^{(2)})\cdot\boldsymbol\omega)^{2}}}\right| \leq|\rm i\mu(\boldsymbol m^{(2)})\cdot\boldsymbol\omega|\leq|\boldsymbol\omega||\mu(\boldsymbol m^{(2)})|.
\end{align*}

\underline{\emph{Case 3$'$}}: $k\geq3$. For $\boldsymbol\gamma=0~\text{or}~1~\text{or}~(0,0)~\text{or}~(0,1)~\text{or}~(1,0)~\text{or}~(1,1)\in\mathscr{D}^{(k)}$, we can obtain the same estimations as the case $k=2$. For $\boldsymbol\gamma=(\boldsymbol\gamma_{1}^{(k-1)},\boldsymbol\gamma_{2}^{(k-1)})\in\mathscr{D}^{(k-1)}\times\mathscr{D}^{(k-1)}$, $\boldsymbol m^{(k)}=(\boldsymbol m_{1}^{(k-1)},\boldsymbol m_{2}^{(k-1)})\in \mathfrak{M}^{(k-1,\boldsymbol\gamma_{1}^{(k-1)})}\times \mathfrak{M}^{(k-1,\boldsymbol\gamma_{2}^{(k-1)})}$, by induction, we have
\begin{align*}
|f(\boldsymbol m^{(k)})|&\leq  | \boldsymbol\omega ||\mu(\boldsymbol m^{(k)})||f(\boldsymbol m_{1}^{(k-1)})||f(\boldsymbol m_{2}^{(k-1)})|\\
&\leq |\boldsymbol\omega||\mu(\boldsymbol m^{(k)})||\boldsymbol\omega|^{\ell(\boldsymbol\gamma_{1}^{(k-1)})} \mathfrak{P}(\boldsymbol m_{1}^{(k-1)})|\boldsymbol\omega|^{\ell(\boldsymbol\gamma_{2}^{(k-1)})} \mathfrak{P}(\boldsymbol m_{2}^{(k-1)})\\
&=| \boldsymbol\omega |^{\ell(\boldsymbol\gamma)}\mathfrak{P}(\boldsymbol m^{(k)}).
\end{align*}

(\uppercase\expandafter{\romannumeral3})  We apply \eqref{g1}, \eqref{lse} and \eqref{to}--\eqref{op}  to estimate the upper bound of $|I(t,\boldsymbol m^{(k)})|$. The following cases can be considered.

\underline{\emph{Case 1$''$}}: $k=1$. Obviously, it follows that
\begin{flalign*}
|I(t,\boldsymbol m^{(1)})|&\leq \frac{1}{2}(|1|+|1|)=1 \qquad \text{if}~\boldsymbol m^{(1)}\in\mathfrak{M}^{(1,\boldsymbol\gamma)}, \boldsymbol\gamma=0\in\mathscr{D}^{(1)},\\
|I(t,\boldsymbol m^{(1)})|&\leq \frac{1}{2}\frac{|\rm i|(|1|+|1|)}{|\left((\boldsymbol n\cdot\boldsymbol \omega)^{2}+(\boldsymbol n\cdot\boldsymbol \omega)^{4}\right)^{\frac{1}{2}}|}\leq \frac{1}{|\boldsymbol n\cdot\boldsymbol \omega|\sqrt{1+(\boldsymbol n\cdot\boldsymbol \omega)^{2}}}\\
&\leq|\boldsymbol\omega|^{-1} \qquad \text{if}~\boldsymbol m^{(1)}\in\mathfrak{M}^{(1,\boldsymbol\gamma)}, \boldsymbol\gamma=1\in\mathscr{D}^{(1)}.
\end{flalign*}

\underline{\emph{Case 2$''$}}: $k=2$.  If $\boldsymbol\gamma=0~\text{or}~1\in\mathscr{D}^{(2)}$, then we can get the same estimations as the case $k=1$. Moreover,
\begin{align*}
|I(t,\boldsymbol m^{(2)})|&\leq \frac{1}{4}\int_{0}^{t}|2||2||2| {\rm d}\tau=2t \quad \text{if}~ \boldsymbol\gamma=(0,0)\in\mathscr{D}^{(2)},\\
|I(t,\boldsymbol m^{(2)})|&\leq \frac{1}{4}\int_{0}^{t}|2||2|\frac{1}{|\boldsymbol\omega|}|2| {\rm d}\tau|=\frac{2t}{|\boldsymbol\omega|}\quad \text{if}~\boldsymbol\gamma=(0,1)\in\mathscr{D}^{(2)},\\
|I(t,\boldsymbol m^{(2)})|&\leq \frac{1}{4}\int_{0}^{t}|2||2|\frac{1}{|\boldsymbol\omega|}|2| {\rm d}\tau=\frac{2t}{|\boldsymbol\omega|} \quad \text{if}~ \boldsymbol\gamma=(1,0)\in\mathscr{D}^{(2)},\\
|I(t,\boldsymbol m^{(2)})|&\leq\frac{1}{4}\int_{0}^{t}|2|\frac{1}{|\boldsymbol\omega|}|2|\frac{1}{|\boldsymbol\omega|}|2| {\rm d}\tau=\frac{2t}{|\boldsymbol\omega|^{2}} \quad \text{if}~ \boldsymbol\gamma=(1,1)\in\mathscr{D}^{(2)}.
\end{align*}

\underline{\emph{Case 3$''$}}: $k\geq3$. For $\boldsymbol\gamma=0~\text{or}~1~\text{or}~(0,0)~\text{or}~(0,1)~\text{or}~(1,0)~\text{or}~(1,1)\in\mathscr{D}^{(k)}$, the same estimations can be shown as the case $k=2$.  If $\boldsymbol\gamma=(\boldsymbol\gamma_{1}^{(k-1)},\boldsymbol\gamma_{2}^{(k-1)})\in\mathscr{D}^{(k-1)}\times\mathscr{D}^{(k-1)}$,  $\boldsymbol m^{(k)}=(\boldsymbol m_{1}^{(k-1)},\boldsymbol m_{2}^{(k-1)})\in \mathfrak{M}^{(k-1,\boldsymbol\gamma_{1}^{(k-1)})}\times \mathfrak{M}^{(k-1,\boldsymbol\gamma_{2}^{(k-1)})}$, an inductive argument yields that
\begin{align*}
|I(t,\boldsymbol m^{(k)})| &\leq 2\int_{0}^{t}|I(\tau,\boldsymbol m_{1}^{(k-1)})||I(\tau,\boldsymbol m_{2}^{(k-1)})|{\rm d}\tau\\
&\leq  2\int_{0}^{t}\frac{(2\tau)^{\ell(\boldsymbol\gamma_{1}^{(k-1)})}}{|\boldsymbol\omega|^{\hbar(\boldsymbol\gamma_{1}^{(k-1)})}\mathfrak{F}{(\boldsymbol\gamma_{1}^{(k-1)})}
} \frac{(2\tau)^{\ell(\boldsymbol\gamma_{2}^{(k-1)})}}{|\boldsymbol\omega|^{\hbar(\boldsymbol\gamma_{2}^{(k-1)})}\mathfrak{F}{(\boldsymbol\gamma_{2}^{(k-1)})}} {\rm d}\tau\\
&=\frac{2^{\ell(\boldsymbol\gamma_{1}^{(k-1)})+\ell(\boldsymbol\gamma_{2}^{(k-1)})+1}}{|\boldsymbol\omega|^{\hbar(\boldsymbol\gamma)}}\int_{0}^{t}\frac{\tau^{\ell(\boldsymbol\gamma_{1}^{(k-1)})}}{\mathfrak{F}{(\boldsymbol\gamma_{1}^{(k-1)})}}
\frac{\tau^{\ell(\boldsymbol\gamma_{2}^{(k-1)})}}{\mathfrak{F}{(\boldsymbol\gamma_{2}^{(k-1)})}} {\rm d}\tau\\
&=\frac{2^{\ell(\boldsymbol\gamma)}}{|\boldsymbol\omega|^{\hbar(\boldsymbol\gamma)}} \frac{t^{\ell(\boldsymbol\gamma_{1}^{(k-1)})+\ell(\boldsymbol\gamma_{2}^{(k-1)})+1}}{(\ell(\boldsymbol\gamma_{1}^{(k-1)})+\ell(\boldsymbol\gamma_{2}^{(k-1)})+1)\mathfrak{F}(\boldsymbol\gamma_{1}^{(k-1)})\mathfrak{F}(\boldsymbol\gamma_{2}^{(k-1)})}\\
&= \frac{(2t)^{\ell(\boldsymbol\gamma)}}{|\boldsymbol\omega|^{\hbar(\boldsymbol\gamma)}\mathfrak{F}{(\boldsymbol\gamma)}}.
\end{align*}
The proof of the lemma is now completed.
\end{proof}

\textbf{Step 2:}\quad Our next goal is to establish an estimation of the sums  involving  the functions $\mathfrak{C}^{(k,\boldsymbol\gamma)}(\boldsymbol m^{(k)})$, $f^{(k,\boldsymbol\gamma)}(\boldsymbol m^{(k)})$ and $I^{(k,\boldsymbol\gamma)}(t,\boldsymbol m^{(k)})$. The main difficulty comes from the complicated combinatorics of the summation process. To overcome this difficulty, we ``change variables'' in the summations. Now we need to define the following set and isomorphic mapping.

Denote
\begin{align*}
\ \mathfrak{B}^{(k,\boldsymbol\gamma)}=
\left\{ \begin{aligned}
&\mathbb{Z}&\text{if}~&\boldsymbol\gamma=0~\text{or}~1\in\mathscr{D}^{(k)},\\
&\mathbb{Z}\times \mathbb{Z}&\text{if}~&\boldsymbol\gamma\in\mathscr{D}^{(2)},\boldsymbol\gamma=(0,0)~\text{or}~(0,1)~\text{or}~(1,0)~\text{or}~(1,1),\\
&\mathfrak{B}^{(k-1,\boldsymbol\gamma_{1}^{(k-1)})}\times \mathfrak{B}^{(k-1,\boldsymbol\gamma_{2}^{(k-1)})}&\text{if}~&\boldsymbol\gamma\in\mathscr{D}^{(k)},k\geq3, \\
&~&&\boldsymbol\gamma= (\boldsymbol\gamma_{1}^{(k-1)},\boldsymbol\gamma_{2}^{(k-1)})\in\mathscr{D}^{(k-1)}\times\mathscr{D}^{(k-1)}.
\end{aligned}\right.
\end{align*}
\begin{defi}\label{de1}
Define inductively the isomorphism $\varphi_{\boldsymbol\gamma}^{(k)}:\mathfrak{M}^{(k,\boldsymbol\gamma)}\longrightarrow\prod_{j=1}^{\sigma(\boldsymbol\gamma)} \mathbb{Z}^{v}$ by
\begin{align*}
\ \varphi_{\boldsymbol\gamma}^{(k)}(\boldsymbol m^{(k)})
= \left\{ \begin{aligned}
&\boldsymbol m&\text{\rm if} ~&\boldsymbol\gamma=0~\text{\rm or}~1\in\mathscr{D}^{(k)}, \boldsymbol m^{(k)}=\boldsymbol m\in \mathbb{Z}^{\nu}=\mathfrak{M}^{(k,\boldsymbol\gamma)},\\
&(\boldsymbol m_{1},\boldsymbol m_{2})&\text{\rm if}~&k=2,\\
&~&&\boldsymbol\gamma=(0,0)~\text{\rm or}~(0,1)~\text{\rm or}~(1,0)~\text{\rm or}~(1,1)\in\mathscr{D}^{(2)},\\
&~&&\boldsymbol m^{(2)}=(\boldsymbol m_{1},\boldsymbol m_{2})\in\mathfrak{M}^{(2,\boldsymbol\gamma)},\\
&(\varphi_{\boldsymbol\gamma_{1}^{(k-1)}}^{(k-1)}(\boldsymbol m_{1}^{(k-1)}),\varphi_{\boldsymbol\gamma_{2}^{(k-1)}}^{(k-1)}(\boldsymbol m_{2}^{(k-1)}))
&\text{\rm if}~&k\geq3,\boldsymbol m^{(k)}\in\mathfrak{M}^{(k,\boldsymbol\gamma)},\\
&~&&\boldsymbol\gamma=(\boldsymbol\gamma_{1}^{(k-1)},\boldsymbol\gamma_{2}^{(k-1)})\in\Gamma^{(k-1)}\times\Gamma^{(k-1)},\\
&~&&\boldsymbol m^{(k)}=(\boldsymbol m_{1}^{(k-1)},\boldsymbol m_{2}^{(k-1)})\\
&~&&\in  \mathfrak{M}^{(k-1,\boldsymbol\gamma_{1}^{(k-1)})}\times \mathfrak{M}^{(k-1,\boldsymbol\gamma_{2}^{(k-1)})}.
\end{aligned}\right.
\end{align*}
Moreover we also define inductively the isomorphism $\phi_{\boldsymbol\gamma}^{(k)}:\mathfrak{B}^{(k,\boldsymbol\gamma)}\longrightarrow\prod_{j=1}^{\sigma(\boldsymbol\gamma)} \mathbb{Z}$ by
\begin{align*}
\ \phi_{\boldsymbol\gamma}^{(k)}(\boldsymbol\alpha^{(k)})= \left\{ \begin{aligned}
&\alpha&\text{\rm if} ~&\boldsymbol\gamma=0~\text{\rm or}~1\in\mathscr{D}^{(k)},\boldsymbol\alpha^{(k)}=\alpha\in \mathbb{Z}=\mathfrak{B}^{(k,\boldsymbol\gamma)},\\
&(\alpha_{1},\alpha_{2})\qquad &\text{\rm if}~&k=2,\\
&~&&\boldsymbol\gamma=(0,0)~\text{\rm or}~(0,1)~\text{\rm or}~(1,0)~\text{\rm or}~(1,1)\in\mathscr{D}^{(2)},\\
&~&&\boldsymbol\alpha^{(2)}=(\alpha_{1},\alpha_{2})\in\mathfrak{B}^{(2,\boldsymbol\gamma)},\\
&(\phi_{\boldsymbol\gamma_{1}^{(k-1)}}^{(k-1)}(\boldsymbol\alpha_{1}^{(k-1)}),\phi_{\boldsymbol\gamma_{2}^{(k-1)}}^{(k-1)}(\boldsymbol\alpha_{2}^{(k-1)}))
&\text{\rm if}~&k\geq3, \boldsymbol\alpha^{(k)}\in\mathfrak{B}^{(k,\boldsymbol\gamma)},\\
&~&&\boldsymbol\gamma=(\boldsymbol\gamma_{1}^{(k-1)},\boldsymbol\gamma_{2}^{(k-1)})\in\mathscr{D}^{(k-1)}\times\mathscr{D}^{(k-1)},\\
&~&&\boldsymbol\alpha^{(k)}=(\boldsymbol\alpha_{1}^{(k-1)},\boldsymbol\alpha_{2}^{(k-1)})\\
&~&&\in \mathfrak{B}^{(k-1,\boldsymbol\gamma_{1}^{(k-1)})}\times \mathfrak{B}^{(k-1,\boldsymbol\gamma_{2}^{(k-1)})}.
\end{aligned}\right.
\end{align*}
\end{defi}
Remark that these isomorphisms defined above induce an ordering of the components of the corresponding vectors. More precisely, for ~$1\leq i\leq\sigma(\boldsymbol\gamma)$, we denote the $i$-th component
of ~$\ \varphi_{\boldsymbol\gamma}^{(k)}(\boldsymbol m^{(k)})$~$\in$~$\prod_{j=1}^{\sigma(\boldsymbol\gamma)} \mathbb{Z}^{\nu}$ by~$(\boldsymbol m^{(k)})_{i}$, where
\begin{align*}
&(\boldsymbol m^{(k)})_{i}=(\boldsymbol m_{1}^{(k-1)})_{i}\quad\text{ if}~1 \leq i \leq\sigma(\gamma_{1}^{(k-1)}),\\
&(\boldsymbol m^{(k)})_{i+\sigma(\boldsymbol \gamma_{1}^{(k-1)})}=(\boldsymbol m_{2}^{(k-1)})_{i}\quad \text{if}~ 1\leq i\leq\sigma(\boldsymbol\gamma_{2}^{(k-1)}).
\end{align*}
By Definition \ref{de1}, we need to introduce the following sets
\begin{align}\label{E22}
\mathscr{T}^{(k,\boldsymbol\gamma)}=\left\{\boldsymbol\alpha\in\mathbb{Z}^{\sigma(\boldsymbol\gamma)}:\sum_{j}\alpha_{j}=1,\alpha_{j}\geq0\right\},
\end{align}
and
\begin{align}\label{E23}
\mathscr{A}^{(k,\boldsymbol\gamma)}= \left\{ \begin{aligned}
&\left\{0\in\mathbb{Z}\right\}&\text{if}~&\boldsymbol\gamma=0~\text{or}~1\in\mathscr{D}^{(k)},\\
&\left\{(\alpha_{1},\alpha_{2})\in\mathbb{Z}^{2}:\alpha_{1}+\alpha_{2}=1,\alpha_{j}\geq0\right\} &\text{if}~&\boldsymbol\gamma\in\mathscr{D}^{(2)},\boldsymbol\gamma=(0,0)~\text{or}~(0,1)~\text{or}~(1,0)~\text{or}~(1,1),\\
&\mathscr{A}^{(k-1,\boldsymbol\gamma_{1}^{(k-1)})}\times\mathscr{A}^{(k-1,\boldsymbol\gamma_{2}^{(k-1)})}+\mathscr{T}^{(k,\boldsymbol\gamma)}
&\text{if}~&\boldsymbol\gamma~\in~\mathscr{D}^{(k)},k\geq3,\\
&~&&\boldsymbol\gamma=(\boldsymbol\gamma_{1}^{(k-1)},\boldsymbol\gamma_{2}^{(k-1)})\in \mathscr{D}^{(k-1)}\times\mathscr{D}^{(k-1)}.\\
\end{aligned}\right.
\end{align}
Because of Lemma \ref{lem2.5}, the terms $|f(\boldsymbol m^{(k)})|$ can be bounded from above by $\mathfrak{P}(\boldsymbol m^{(k)})$. The following lemma addresses that the functions $\mathfrak{P}(\boldsymbol m^{(k)})$ can be estimated by the ``new variables'' $\alpha_{j}$.
\begin{lemm}\label{lem2.8}
For $\boldsymbol\gamma\in \mathscr{D}^{(k)} $,~$\boldsymbol m^{(k)}\in\mathfrak{M}^{(k,\boldsymbol\gamma)}$, one has
\begin{align}\label{E24}
\mathfrak{P}(\boldsymbol m^{(k)})\leq\sum_{\boldsymbol\alpha=(\alpha_{i})_{1\leq i\leq\sigma(\boldsymbol\gamma)}\in\mathscr{A}^{(k,\boldsymbol\gamma)}}\prod\limits_{i}|(\boldsymbol m^{(k)})_{i}|^{\alpha_{i}}.
\end{align}
\begin{proof}
The statement follows from  \eqref{slp}, \eqref{bp} and \eqref{E22}--\eqref{E23}. We need to  consider the following
three cases.

\underline{\emph{Case 1}}: $k=1$. For $\boldsymbol m^{(1)}\in\mathfrak{M}^{(1,\boldsymbol\gamma)}, \boldsymbol\gamma=0~\text{or}~1\in\mathscr{D}^{(1)}$, one has $\mathfrak{P}(\boldsymbol m^{(1)})=1$.
Moreover, in the right hand side of \eqref{E24}, we have
\begin{align*}
\sum_{\boldsymbol\alpha=(\alpha_{i})_{1\leq i\leq\sigma(\boldsymbol\gamma)}\in\mathscr{A}^{(k,\boldsymbol\gamma)}}\prod\limits_{i}|(\boldsymbol m^{(k)})_{i}|^{\alpha_{i}}
=\sum_{\boldsymbol\alpha=(\alpha_{i})_{1\leq i\leq1}\in\{0\in\mathbb{Z}\}}\prod\limits_{i}|(\boldsymbol m^{(k)})_{i}|^{\alpha_{i}}
=\prod\limits_{i}|(\boldsymbol m^{(k)})_{i}|^{0}=1.
\end{align*}

\underline{\emph{Case 2}}: $k=2$. In the left hand side of \eqref{E24}, we have
\begin{align*}
\mathfrak{P}(\boldsymbol m^{(2)})&=\sum\limits_{{\boldsymbol\alpha=(\alpha_{i})_{1\leq i\leq1}\in\{0\in\mathbb{Z}\}}}\prod\limits_{i}|(\boldsymbol m^{(2)})_{i}|^{0}=1
\quad \text{if}~ \boldsymbol m^{(2)}\in\mathfrak{M}^{(2,\boldsymbol\gamma)}, \boldsymbol\gamma=0~\text{or}~1\in\mathscr{D}^{(2)}, \\
\mathfrak{P}(\boldsymbol m^{(2)})&=|\boldsymbol m_{1}+\boldsymbol m_{2}|\leq|\boldsymbol m_{1}|+|\boldsymbol m_{2}|
\quad \text{if}~ \boldsymbol m^{(2)}\in\mathfrak{M}^{(2,\boldsymbol\gamma)}, \\ &\,\,\,\qquad\qquad\qquad\qquad\qquad\qquad\qquad\boldsymbol\gamma=(0,0)~\text{or}~(0,1)~\text{or}~(1,0)~\text{or}~(1,1)\in\mathscr{D}^{(2)}.
\end{align*}
On the other hand, it follows that
\begin{align*}
\sum_{\boldsymbol\alpha=(\alpha_{i})_{1\leq i\leq\sigma(\boldsymbol\gamma)}\in\mathscr{A}^{(2,\boldsymbol\gamma)}}\prod\limits_{i}|(\boldsymbol m^{(k)})_{i}|^{\alpha_{i}}
=&\sum_{\boldsymbol\alpha=(\alpha_{i})_{1\leq i\leq2}\in\{(\boldsymbol\alpha_{1},\boldsymbol\alpha_{2})\in\mathbb{Z}^{2}: \boldsymbol\alpha_{1}+\boldsymbol\alpha_{2}=1,\boldsymbol\alpha_{j}\geq0\}}\prod\limits_{i}|(\boldsymbol m^{(2)})_{i}|^{\alpha_{i}}\nonumber\\
=&\sum_{\boldsymbol\alpha=(\alpha_{i})_{1\leq i\leq2}\in\{(1,0),(0,1)\}}\prod\limits_{i}|(\boldsymbol m^{(2)})_{i}|^{\alpha_{i}}\nonumber\\
=&|(\boldsymbol m^{(2)})_{1}|^{1}|(\boldsymbol m^{(2)})_{2}|^{0}+|(\boldsymbol m^{(2)})_{1}|^{0}|(\boldsymbol m^{(2)})_{2}|^{1}\nonumber\\
=&|\boldsymbol m_{1}|+|\boldsymbol m_{2}|.
\end{align*}

\underline{\emph{Case 3}}: $k\geq3$. In the left hand side of \eqref{E24}, it is clear that
\begin{align*}
&\mathfrak{P}(\boldsymbol m^{(k)})=\sum\limits_{{\boldsymbol\alpha=(\alpha_{i})_{1\leq i\leq1}\in\{0\in\mathbb{Z}\}}}\prod\limits_{i}|(\boldsymbol m^{(k)})_{i}|^{0}=1\quad\text{if}~\boldsymbol\gamma=0~\text{or}~1\in\mathscr{D}^{(k)},\\
&\mathfrak{P}(\boldsymbol m^{(k)})=|\boldsymbol m_{1}+\boldsymbol m_{2}|\leq|\boldsymbol m_{1}|+|\boldsymbol m_{2}|=\sum_{\boldsymbol\alpha=(\alpha_{i})_{1\leq i\leq2}\in\{(1,0),(0,1)\}}\prod\limits_{i}|(\boldsymbol m^{(k)})_{i}|^{\alpha_{i}}\\
&\qquad\qquad\qquad\qquad\qquad\qquad\qquad\qquad\qquad\qquad\quad\text{if}~\boldsymbol\gamma=(0,0)~\text{or}~(0,1)~\text{or}~(1,0)~\text{or}~(1,1)\in\mathscr{D}^{(k)}.
\end{align*}

Suppose that \eqref{E24} could hold for any $\boldsymbol\gamma'\in\mathscr{D}^{(k^{\prime})}$ with $k'<k$. Let $\boldsymbol\gamma=(\boldsymbol\gamma_{1}^{(k-1)},\boldsymbol\gamma_{2}^{(k-1)})\in\mathscr{D}^{(k-1)}\times\mathscr{D}^{(k-1)}$, $\boldsymbol m^{(k)}\in\mathfrak{M}^{(k,\boldsymbol\gamma)}$ and $\boldsymbol m^{(k)}= (\boldsymbol m_{1}^{(k-1)},\boldsymbol m_{2}^{(k-1)})\in \mathfrak{M}^{(k-1,\boldsymbol\gamma_{1}^{(k-1)})}\times \mathfrak{M}^{(k-1,\boldsymbol\gamma_{2}^{(k-1)})}$. Then it follows from the above assumption, the definition of $\mathfrak{P}(\boldsymbol m^{(k)})$, $\mathscr{A}^{(k,\boldsymbol\gamma)}$ (recall \eqref{bp} and \eqref{E23}) and Definition \ref{de1} that
\begin{align*}
\mathfrak{P}(\boldsymbol m^{(k)}
=&\Bigg|\sum\limits_{i=1}^{\sigma(\boldsymbol\gamma_{1}^{(k-1)})}(\boldsymbol m^{(k)})_{i}+\sum\limits_{i=1}^{\sigma(\boldsymbol\gamma_{2}^{(k-1)})}(\boldsymbol m^{(k)})_{i+\sigma(\boldsymbol\gamma_{1}^{(k-1)})} \Bigg|\prod\limits_{j=1}^{2}\mathfrak{P}(\boldsymbol m_{j}^{(k-1)})\nonumber\\
=&\Bigg|\sum\limits_{i=1}^{\sigma(\boldsymbol\gamma_{1}^{(k-1)})}(\boldsymbol m_{1}^{(k-1)})_{i}+\sum\limits_{i=1}^{\sigma(\boldsymbol\gamma_{2}^{(k-1)})}(\boldsymbol m_{2}^{(k-1)})_{i} \Bigg|\prod\limits_{j=1}^{2}\mathfrak{P}(\boldsymbol m_{j}^{(k-1)})\nonumber\\
=&\Bigg|\sum\limits_{j=1}^{2}\sum\limits_{i=1}^{\sigma(\boldsymbol\gamma_{j}^{(k-1)})}(\boldsymbol m_{j}^{(k-1)})_{i} \Bigg|\prod\limits_{j=1}^{2}\mathfrak{P}(\boldsymbol m_{j}^{(k-1)})\nonumber\\
\leq &\left(\sum\limits_{j=1}^{2}\sum\limits_{i=1}^{\sigma(\boldsymbol\gamma_{j}^{(k-1)})}| (\boldsymbol m_{j}^{(k-1)})_{i}|\right)
\prod\limits_{j=1}^{2}\sum\limits_{\boldsymbol\alpha_{j}\in\mathscr{A}^{(k-1,\boldsymbol\gamma_{j}^{(k-1)})}} \prod\limits_{i}| (\boldsymbol m_{j}^{(k-1)})_{i} |^{\alpha_{i,j}}\nonumber\\
=& \sum\limits_{\boldsymbol\alpha=(\alpha_{i})_{1\leq i\leq\sigma(\boldsymbol\gamma)}\in\mathscr{A}^{(k,\boldsymbol\gamma)}}\prod\limits_{i}|(\boldsymbol m^{(k)})_{i}|^{\alpha_{i}},
\end{align*}
where $\boldsymbol\alpha_{j}=(\alpha_{1,j},\alpha_{2,j},\cdots,\alpha_{\sigma(\boldsymbol\gamma_{j}^{(k-1)}),j})\in\mathscr{A}^{(k-1,\boldsymbol\gamma_{j}^{(k-1)})}$. This ends the proof of the lemma.
\end{proof}
\end{lemm}


In the following lemma, we evaluate the sums involving the functions $\mathfrak{P}(\boldsymbol m^{(k)})$ and $\exp(-\frac{\kappa|\boldsymbol m^{(k)}|}{2})$. As a result, we can give an estimation of the sums involving the functions $\mathfrak{C}^{(k,\boldsymbol\gamma)}(\boldsymbol m^{(k)})$ and $f^{(k,\boldsymbol\gamma)}(\boldsymbol m^{(k)})$.

\begin{lemm}\label{lem2.10}
Denote $\boldsymbol\alpha!:=\prod\limits_{j}\alpha_{j}!$. One has that for any~$0<\kappa\leq1$, 

$\mathrm{(\uppercase\expandafter{\romannumeral1})}$
\begin{align}\label{she}
\sum\limits_{\boldsymbol m^{(k)}\in\mathfrak{M}^{(k,\boldsymbol\gamma)}}\exp(-\frac{\kappa|\boldsymbol m^{(k)}|}{2})\mathfrak{P}(\boldsymbol m^{(k)})
\leq(\frac{24}{\kappa})^{\sigma(\boldsymbol\gamma)\nu}\sum_{\boldsymbol\alpha=(\alpha_{i})_{1\leq i\leq\sigma(\boldsymbol\gamma)}\in\mathbb{Z}_{+}^{\sigma(\boldsymbol\gamma)}}\prod_{i}{\alpha_{i}}!,
\end{align}

$\mathrm{(\uppercase\expandafter{\romannumeral2})}$
\begin{align}\label{E111}
\sum\limits_{\stackrel{\boldsymbol m^{(k)}\in\mathfrak{M}^{(k,\boldsymbol\gamma)}}{\mu(\boldsymbol m^{(k)})=\boldsymbol n}}\exp(-\frac{\kappa|\boldsymbol m^{(k)}|}{2})\mathfrak{P}(\boldsymbol m^{(k)})
\leq(\frac{48}{\kappa})^{\sigma(\boldsymbol\gamma)\nu}\exp(-\frac{\kappa}{4}|\boldsymbol n|)\sum_{\boldsymbol\alpha=(\alpha_{i})_{1\leq i\leq\sigma(\boldsymbol\gamma)}\in\mathbb{Z}_{+}^{\sigma(\boldsymbol\gamma)}}\prod_{i}{\alpha_{i}}!.
\end{align}
\begin{proof}
(\uppercase\expandafter{\romannumeral1})
It follows from Lemma \ref{lem2.8} that
\begin{align*}
&\sum\limits_{\boldsymbol m^{(k)}\in\mathfrak{M}^{(k,\boldsymbol\gamma)}}\exp(-\frac{\kappa|\boldsymbol m^{(k)}|}{2})\mathfrak{P}(\boldsymbol m^{(k)})\nonumber\\
&\leq\sum\limits_{\boldsymbol m^{(k)}\in\mathfrak{M}^{(k,\boldsymbol\gamma)}}\exp(-\frac{\kappa|\boldsymbol m^{(k)}|}{2})\sum_{\boldsymbol\alpha=(\alpha_{i})_{1\leq i\leq\sigma(\boldsymbol\gamma)}\in\mathscr{A}^{(k,\boldsymbol\gamma)}}\prod_{i}|(\boldsymbol m^{(k)})_{i}|^{\alpha_{i}}\nonumber\\
&\leq\sum_{\boldsymbol\alpha=(\alpha_{i})_{1\leq i\leq\sigma(\boldsymbol\gamma)}\in\mathscr{A}^{(k,\boldsymbol\gamma)}}\sum\limits_{\boldsymbol m^{(k)}=((\boldsymbol m^{(k)})_{i})_{1\leq i\leq\sigma(\boldsymbol\gamma)}\in\mathfrak{M}^{(k,\boldsymbol\gamma)}}\prod_{i}|(\boldsymbol m^{(k)})_{i}|^{\alpha_{i}}\exp(-\frac{\kappa|(\boldsymbol m^{(k)})_{i}|}{2})\nonumber\\
&\leq\sum_{\boldsymbol\alpha=(\boldsymbol\alpha_{i})_{1\leq i\leq\sigma(\boldsymbol\gamma)}\in\mathscr{A}^{(k,\boldsymbol\gamma)}}\sum\limits_{\boldsymbol m^{(k)}=((\boldsymbol m^{(k)})_{i})_{1\leq i\leq\sigma(\boldsymbol\gamma)}\in\mathfrak{M}^{(k,\boldsymbol\gamma)}}\prod_{i}\alpha_{i}!(\frac4\kappa)^{\alpha_{i}}\frac{(\frac{\displaystyle\kappa|(\boldsymbol m^{(k)})_{i}|}{\displaystyle 4})^{\displaystyle\alpha_{i}}}{\displaystyle\alpha_{i}!}\exp(-\frac{\kappa|(\boldsymbol m^{(k)})_{i}|}{2})\nonumber\\
&\leq\sum_{\boldsymbol\alpha=(\boldsymbol\alpha_{i})_{1\leq i\leq\sigma(\boldsymbol\gamma)}\in\mathscr{A}^{(k,\boldsymbol\gamma)}}\sum\limits_{\boldsymbol m^{(k)}=((\boldsymbol m^{(k)})_{i})_{1\leq i\leq\sigma(\boldsymbol\gamma)}\in\mathfrak{M}^{(k,\boldsymbol\gamma)}}\boldsymbol\alpha!\prod_{i}(\frac4\kappa)^{\alpha_{i}}\exp(\displaystyle\frac{\kappa}{4}|(\boldsymbol m^{(k)})_{i}|)\exp(-\frac{\kappa|(\boldsymbol m^{(k)})_{i}|}{2})\nonumber\\
&\leq\sum_{\boldsymbol\alpha=(\boldsymbol\alpha_{i})_{1\leq i\leq\sigma(\boldsymbol\gamma)}\in\mathscr{A}^{(k,\boldsymbol\gamma)}}\boldsymbol\alpha!(\frac4\kappa)^{|\boldsymbol\alpha|}
\sum\limits_{\boldsymbol m^{(k)}=((\boldsymbol m^{(k)})_{i})_{1\leq i\leq\sigma(\boldsymbol\gamma)}\in\mathfrak{M}^{(k,\boldsymbol\gamma)}}\prod_{i}\exp(-\displaystyle\frac{\kappa}{4}|(\boldsymbol m^{(k)})_{i}|)\nonumber\\
&=\sum_{\boldsymbol\alpha=(\boldsymbol\alpha_{i})_{1\leq i\leq\sigma(\boldsymbol\gamma)}\in\mathscr{A}^{(k,\boldsymbol\gamma)}}\boldsymbol\alpha!(\frac4\kappa)^{|\boldsymbol\alpha|}
\sum\limits_{{\boldsymbol m^{(k)}}=((\boldsymbol m^{(k)})_{1},\ldots ,(\boldsymbol m^{(k)})_{\sigma(\boldsymbol\gamma)})\in\underbrace{\mathbb{Z}^{\nu}\times\mathbb{Z}^{\nu}\times\cdots\times\mathbb{Z}^{\nu}}}\prod_{i}\exp(-\displaystyle\frac{\kappa}{4}|(\boldsymbol m^{(k)})_{i}|)\nonumber\\
&=\sum_{\boldsymbol\alpha=(\boldsymbol\alpha_{i})_{1\leq i\leq\sigma(\boldsymbol\gamma)}\in\mathscr{A}^{(k,\boldsymbol\gamma)}}\boldsymbol\alpha!(\frac4\kappa)^{|\boldsymbol\alpha|}\left(\sum\limits_{\boldsymbol m\in\mathbb{Z}^{\nu}}\exp(-\displaystyle\frac{\kappa}{4}|\boldsymbol m|)\right)^{\sigma(\boldsymbol\gamma)}\nonumber\\
&=\sum_{\boldsymbol\alpha=(\boldsymbol\alpha_{i})_{1\leq i\leq\sigma(\boldsymbol\gamma)}\in\mathscr{A}^{(k,\boldsymbol\gamma)}}\boldsymbol\alpha!(\frac4\kappa)^{|\boldsymbol\alpha|}\left(\sum\limits_{\boldsymbol m\in\mathbb{Z}}\exp(-\frac{\kappa}{4}|\boldsymbol m|)\right)^{\sigma(\boldsymbol\gamma)\nu}\nonumber\\
&=\sum_{\boldsymbol\alpha=(\boldsymbol\alpha_{i})_{1\leq i\leq\sigma(\boldsymbol\gamma)}\in\mathscr{A}^{(k,\boldsymbol\gamma)}}\boldsymbol\alpha!(\frac4\kappa)^{|\boldsymbol\alpha|}\left(1+2\sum\limits_{\boldsymbol m\in \mathbb{N}_+}\exp(-\frac{\kappa}{4}\boldsymbol m)\right)^{\sigma(\boldsymbol\gamma)\nu}\nonumber\\
&=\sum_{\boldsymbol\alpha=(\boldsymbol\alpha_{i})_{1\leq i\leq\sigma(\boldsymbol\gamma)}\in\mathscr{A}^{(k,\boldsymbol\gamma)}}\boldsymbol\alpha!(\frac4\kappa)^{|\boldsymbol\alpha|}\left(1+2\frac{ \exp({-\frac{\kappa}{4}})}{ 1-\exp({-\frac{\kappa}{4}})}\right)^{\sigma(\boldsymbol\gamma)\nu}\nonumber\\
&\leq\sum_{\boldsymbol\alpha=(\boldsymbol\alpha_{i})_{1\leq i\leq\sigma(\boldsymbol\gamma)}\in\mathscr{A}^{(k,\boldsymbol\gamma)}}\boldsymbol\alpha!(\frac4\kappa)^{|\boldsymbol\alpha|}\left(\frac{ 2}{1-\exp({-\frac{\kappa}{4}})}\right)^{\sigma(\boldsymbol\gamma)\nu}\nonumber\\
&\leq\sum_{\boldsymbol\alpha=(\boldsymbol\alpha_{i})_{1\leq i\leq\sigma(\boldsymbol\gamma)}\in\mathscr{A}^{(k,\boldsymbol\gamma)}}\boldsymbol\alpha!(\frac4\kappa)^{|\boldsymbol\alpha|}(\frac{24}{\kappa})^{\sigma(\boldsymbol\gamma)\nu}\nonumber\\
&\leq\sum_{\boldsymbol\alpha=(\alpha_{i})_{1\leq i\leq\sigma(\boldsymbol\gamma)}\in\mathscr{A}^{(k,\boldsymbol\gamma)}}\boldsymbol\alpha!(\frac{24}{\kappa})^{|\boldsymbol\alpha|+\sigma(\boldsymbol\gamma)\nu}\nonumber\\
&\leq(\frac{24}{\kappa})^{\sigma(\boldsymbol\gamma)\nu}\sum_{\boldsymbol\alpha=(\alpha_{i})_{1\leq i\leq\sigma(\boldsymbol\gamma)}\in\mathbb{Z}_{+}^{\sigma(\boldsymbol\gamma)}}\prod_{i}{\alpha_{i}}!.
\end{align*}

(\uppercase\expandafter{\romannumeral2})
Let $\mu(\boldsymbol m^{(k)})=\boldsymbol n$. Observe that $|\boldsymbol m^{(k)}|\geq |\boldsymbol n|$. Then combining this with \eqref{she} gives that
\begin{align*}
\sum\limits_{\stackrel{\boldsymbol m^{(k)}\in\mathfrak{M}^{(k,\boldsymbol\gamma)}}{\mu(\boldsymbol m^{(k)})=\boldsymbol n}}\exp(-\frac{\kappa|\boldsymbol m^{(k)}|}{2})\mathfrak{P}(\boldsymbol m^{(k)})\leq&\sum\limits_{\stackrel{\boldsymbol m^{(k)}\in\mathfrak{M}^{(k,\boldsymbol\gamma)}}{\mu(\boldsymbol m^{(k)})=\boldsymbol n}}\exp(-\frac{\kappa}{4}|\boldsymbol m^{(k)}|)\mathfrak{P}(\boldsymbol m^{(k)})\exp(-\frac{\kappa}{4}|\boldsymbol n|)\nonumber\\
\leq&(\frac{48}{\kappa})^{\sigma(\boldsymbol\gamma)\nu}\exp(-\frac{\kappa}{4}|\boldsymbol n|)\sum_{\boldsymbol\alpha=(\alpha_{i})_{1\leq i\leq\sigma(\boldsymbol\gamma)}\in\mathbb{Z}_{+}^{\sigma(\boldsymbol\gamma)}}\prod_{i}{\alpha_{i}}!.
\end{align*}
Thus we arrive at the conclusion of the lemma.
\end{proof}
\end{lemm}


It remains to estimate the upper bounds of the terms $|I(t,\boldsymbol m^{(k)})|$. In view of Lemma \ref{lem2.5}, $|I(t,\boldsymbol m^{(k)})|$ can be bounded from above by $\frac{(2t)^{\ell(\boldsymbol\gamma)}}{\mathfrak{F}{(\boldsymbol\gamma)}}$. In the next lemma, we will provide an estimation of the terms $\frac{(2t)^{\ell(\boldsymbol\gamma)}}{\mathfrak{F}{(\boldsymbol\gamma)}}$ and $\prod_{i}\alpha_{i}!$. In fact, this gives an ideal estimation for the total sum of these terms when  $t$ belongs to a given interval.

\begin{lemm}\label{L6}
If $0<t\leq{1}/{16}$, then
\begin{align}\label{E29}
\sum\limits_{\boldsymbol\gamma\in\mathscr{D}^{(k)}}\frac{(2t)^{\ell(\boldsymbol\gamma)}}{\mathfrak{F}{(\boldsymbol\gamma)}}\sum\limits_{\boldsymbol\alpha=(\alpha_{i})_{1\leq i\leq\sigma(\boldsymbol\gamma)}\in\mathscr{A}^{(k,\boldsymbol\gamma)}}\prod_{i}\alpha_{i}!\leq2.
\end{align}
\begin{proof}
The proof is based on formulae \eqref{slp}--\eqref{lse} and \eqref{op}--\eqref{E23}. Let us consider the following three cases.


\underline{\emph{Case 1}}: $k=1$.  Since $\boldsymbol\gamma=0~\text{or}~1\in\mathscr{D}^{(1)}$, we obtain
\begin{align*}
\ell(\boldsymbol\gamma)=0, \quad\mathfrak{F}(\boldsymbol\gamma)=1,\quad\mathscr{A}^{(1,\boldsymbol\gamma)}=\{0\in\mathbb{Z}\}.
\end{align*}
Therefore,
\begin{align}\label{E30}
\sum\limits_{\boldsymbol\gamma\in\mathscr{D}^{(1)}}\frac{(2t)^{0}}{1}\sum\limits_{\boldsymbol\alpha=(\alpha_{i})_{1\leq i\leq\sigma(\boldsymbol\gamma)}\in\{0\in\mathbb{Z}\}}\prod 0!=1<2.
\end{align}

\underline{\emph{Case 2}}: $k=2$.  If $\boldsymbol\gamma=0~\text{or}~1\in\mathscr{D}^{(2)}$, then the same estimation can be obtained as the case $k=1$. For $\boldsymbol\gamma=(0,0)~\text{or}~(0,1)~\text{or}~(1,0)$
 $\text{or}~(1,1)\in\mathscr{D}^{(2)}$, we can carry out
\begin{align*}
\ell(\boldsymbol\gamma)=1,\quad \mathfrak{F}(\boldsymbol\gamma)=1,\quad\mathscr{A}^{(2,\boldsymbol\gamma)}=\{(1,0),(0,1)\in\mathbb{Z}^{2}\}.
\end{align*}
Hence,
\begin{align*}
\sum\limits_{\boldsymbol\gamma\in\mathscr{D}^{(2)}\backslash\ \mathscr{D}^{(1)}}\frac{(2t)^{1}}{1}\sum\limits_{\boldsymbol\alpha=(\alpha_{i})_{1\leq i\leq\sigma(\boldsymbol\gamma)}\in\{(1,0),(0,1)\in\mathbb{Z}^{2}\}}\prod_{i}\alpha_{i}!=2t.
\end{align*}
This implies that
\begin{align*}
\sum\limits_{\boldsymbol\gamma\in\mathscr{D}^{(2)}}\frac{(2t)^{\ell(\boldsymbol\gamma)}}{\mathfrak{F}{(\boldsymbol\gamma)}}\sum\limits_{\boldsymbol\alpha=(\alpha_{i})_{1\leq i\leq\sigma(\boldsymbol\gamma)}\in\mathscr{A}^{(2,\boldsymbol\gamma)}}\prod_{i}\alpha_{i}!=&\sum\limits_{\boldsymbol\gamma\in\mathscr{D}^{(2)}\backslash\ \mathscr{D}^{(1)}}\frac{(2t)^{1}}{1}\sum\limits_{\boldsymbol\alpha=(\alpha_{i})_{1\leq i\leq\sigma(\boldsymbol\gamma)}\in\{(1,0),(0,1)\in\mathbb{Z}^{2}\}}\prod_{i}\alpha_{i}!\\
&+\sum\limits_{\boldsymbol\gamma\in\mathscr{D}^{(1)}}\frac{(2t)^{0}}{1}\sum\limits_{\boldsymbol\alpha=(\alpha_{i})_{1\leq i\leq\sigma(\boldsymbol\gamma)}\in\{0\in\mathbb{Z}\}}\prod 0!\nonumber\\
<&1+2t\leq2.
\end{align*}

\underline{\emph{Case 3}}: $k\geq3$.  If $\boldsymbol\gamma=0~\text{or}~1~\text{or}~(0,0)~\text{or}~(0,1)~\text{or}~(1,0)~\text{or}~(1,1)\in\mathscr{D}^{(k)}$, then we can derive the same estimation  as the case $k=2$.

Moreover, for any $\boldsymbol\gamma$, it is evident that $\sigma(\boldsymbol\gamma)=\ell(\boldsymbol\gamma)+1$. This arrives at
\begin{align*}
\sum\limits_{j_{0}=1,2}\displaystyle\frac{\ell(\boldsymbol\gamma_{j_{0}}^{(k-1)})+\sigma(\boldsymbol\gamma_{j_{0}}^{(k-1)})}{\ell(\boldsymbol\gamma_{1}^{(k-1)})+\ell(\boldsymbol\gamma_{2}^{(k-1)})+1}
=\frac{\ell(\boldsymbol\gamma_{1}^{(k-1)})+\sigma(\boldsymbol\gamma_{1}^{(k-1)})+\ell(\boldsymbol\gamma_{2}^{(k-1)})+\sigma(\boldsymbol\gamma_{2}^{(k-1)})}{\ell(\boldsymbol\gamma_{1}^{(k-1)})+\ell(\boldsymbol\gamma_{2}^{(k-1)})+1}=2
\end{align*}
for any~$\boldsymbol\gamma_{1},\boldsymbol\gamma_{2}$. 

Let us consider $\boldsymbol\gamma=(\boldsymbol\gamma_{1}^{(k-1)},\boldsymbol\gamma_{2}^{(k-1)})
\in\mathscr{D}^{(k-1)}\times\mathscr{D}^{(k-1)}$. As a consequence,
\begin{align*}
&\sum\limits_{\boldsymbol\gamma\in\mathscr{D}^{(k)}\backslash\ \mathscr{D}^{(1)}}\displaystyle\frac{(2t)^{\ell(\boldsymbol\gamma)}}{\mathfrak{F}{(\boldsymbol\gamma)}}\sum\limits_{\boldsymbol\alpha\in\mathscr{A}^{(k,\boldsymbol\gamma)}}\prod\limits_{i=1}^{\sigma(\boldsymbol\gamma)}\alpha_{i}!\nonumber\\
&=\sum\limits_{\boldsymbol\gamma=(\boldsymbol\gamma_{1}^{(k-1)},\boldsymbol\gamma_{2}^{(k-1)})\in\mathscr{D}^{(k-1)}\times\mathscr{D}^{(k-1)}}\displaystyle\frac{2t}{\ell(\boldsymbol\gamma_{1}^{(k-1)})+\ell(\boldsymbol\gamma_{2}^{(k-1)})+1}\prod\limits_{j=1,2}\frac{(2t)^{\ell(\boldsymbol\gamma_{j}^{(k-1)})}}{\mathfrak{F}{(\boldsymbol\gamma_{j}^{(k-1)})}} \nonumber\\
&\quad\times\sum\limits_{\boldsymbol\alpha=(\boldsymbol\alpha^{(1)},\boldsymbol\alpha^{(2)})+\boldsymbol\beta,\boldsymbol\alpha^{(j)}
\in\mathscr{A}^{(k-1,\boldsymbol\gamma^{(k-1)}_{j})},\boldsymbol\beta\in\mathscr{T}^{(k,(\boldsymbol\gamma_{1}^{(k-1)},\boldsymbol\gamma_{2}^{(k-1)}))}}
\prod\limits_{i=1}^{\sigma(\boldsymbol\gamma)}\alpha_{i}!\nonumber\\
&\leq\sum\limits_{\boldsymbol\gamma=(\boldsymbol\gamma_{1}^{(k-1)},\boldsymbol\gamma_{2}^{(k-1)})\in\mathscr{D}^{(k-1)}\times\mathscr{D}^{(k-1)}}\displaystyle\frac{2t}{\ell(\boldsymbol\gamma_{1}^{(k-1)})+\ell(\boldsymbol\gamma_{2}^{(k-1)})+1}
\nonumber\\
&\quad\times \sum\limits_{\boldsymbol\alpha^{(j)}\in\mathscr{A}^{(k-1,\boldsymbol\gamma^{(k-1)}_{j})},j=1,2}\sum\limits_{j_{0}=1,2}
\sum\limits_{i_{0}=1}^{\sigma(\boldsymbol\gamma_{j_{0}}^{(k-1)})}
\prod\limits_{j=1,2}\frac{(2t)^{\ell(\boldsymbol\gamma_{j}^{(k-1)})}}{\mathfrak{F}{(\boldsymbol\gamma_{j}^{(k-1)})}}
\prod\limits_{i=1}^{\sigma(\boldsymbol\gamma_{j}^{(k-1)})}((\boldsymbol\alpha^{(j)})_{i}+\delta_{i,i_{0}}\delta_{j,j_{0}})!\nonumber\\
&=\sum\limits_{\boldsymbol\gamma=(\boldsymbol\gamma_{1}^{(k-1)},\boldsymbol\gamma_{2}^{(k-1)})
\in\mathscr{D}^{(k-1)}\times\mathscr{D}^{(k-1)}}\frac{2t}{\ell(\boldsymbol\gamma_{1}^{(k-1)})
+\ell(\boldsymbol\gamma_{2}^{(k-1)})+1}
\nonumber\\
&\quad\times\sum\limits_{\boldsymbol\alpha^{(j)}\in\mathscr{A}^{(k-1,\boldsymbol\gamma^{(k-1)}_{j})},j=1,2} \sum\limits_{j_{0}=1,2}\sum\limits_{i_{0}=1}^{\sigma(\boldsymbol\gamma_{j_{0}}^{(k-1)})}((\alpha^{(j_{0})})_{i_{0}}+1)
\prod\limits_{j=1,2}\frac{(2t)^{\ell(\boldsymbol\gamma_{j}^{(k-1)})}}{\mathfrak{F}{(\boldsymbol\gamma_{j}^{(k-1)})}}\prod\limits_{i=1}^{\sigma(\boldsymbol\gamma_{j}^{(k-1)})}((\boldsymbol\alpha^{(j)})_{i})!\nonumber\\
&=\sum\limits_{\boldsymbol\gamma=(\boldsymbol\gamma_{1}^{(k-1)},\boldsymbol\gamma_{2}^{(k-1)})
\in\mathscr{D}^{(k-1)}\times\mathscr{D}^{(k-1)}}\displaystyle\frac{2t}{\ell(\boldsymbol\gamma_{1}^{(k-1)})
+\ell(\boldsymbol\gamma_{2}^{(k-1)})+1}\nonumber\\
&\quad\times
\sum\limits_{\boldsymbol\alpha^{(j)}\in\mathscr{A}^{(k-1,\boldsymbol\gamma^{(k-1)}_{j})},j=1,2}\sum\limits_{j_{0}=1,2}(\ell(\boldsymbol\gamma_{j_{0}}^{(k-1)})+\sigma(\boldsymbol\gamma_{j_{0}}^{(k-1)})\displaystyle\prod\limits_{j=1,2}\frac{(2t)^{\ell(\boldsymbol\gamma_{j}^{(k-1)})}}{\mathfrak{F}{(\boldsymbol\gamma_{j}^{(k-1)})}}\prod\limits_{i=1}^{\sigma(\boldsymbol\gamma_{j}^{(k-1)})}((\boldsymbol\alpha^{(j)})_{i})!\nonumber\\
&=4t\sum\limits_{\boldsymbol\gamma=(\boldsymbol\gamma_{1}^{(k-1)},\boldsymbol\gamma_{2}^{(k-1)})
\in\mathscr{D}^{(k-1)}\times\mathscr{D}^{(k-1)}}\sum\limits_{\boldsymbol\alpha^{(j)}
\in\mathscr{A}^{(k-1,\boldsymbol\gamma^{(k-1)}_{j})},j=1,2} \displaystyle\prod\limits_{j=1,2}\frac{(2t)^{\ell(\boldsymbol\gamma_{j}^{(k-1)})}}{\mathfrak{F}{(\boldsymbol\gamma_{j}^{(k-1)})}}\prod\limits_{i=1}^{\sigma(\boldsymbol\gamma_{j}^{(k-1)})}((\boldsymbol\alpha^{(j)})_{i})!\nonumber\\
&=4t\displaystyle\prod\limits_{j=1,2}\frac{(2t)^{\ell(\boldsymbol\gamma_{j}^{(k-1)})}}{\mathfrak{F}{(\boldsymbol\gamma_{j}^{(k-1)})}}\sum\limits_{\boldsymbol\gamma_{j}^{(k-1)}\in\mathscr{D}^{(k-1)}}\sum\limits_{\boldsymbol\alpha^{(j)}\in\mathscr{A}^{(k-1,\boldsymbol\gamma^{(k-1)}_{j})}}\prod\limits_{i=1}^{\sigma(\boldsymbol\gamma_{j}^{(k-1)})}((\boldsymbol\alpha^{(j)})_{i})!\nonumber\\
&\leq4t\times2\times2=16t.
\end{align*}
Combining this with \eqref{E30} yields that
\begin{align*}
\sum\limits_{\boldsymbol\gamma\in\mathscr{D}^{(k)}}\frac{(2t)^{\ell(\boldsymbol\gamma)}}{\mathfrak{F}{(\boldsymbol\gamma)}}\sum\limits_{\boldsymbol\alpha=(\alpha_{i})_{1\leq i\leq\sigma(\boldsymbol\gamma)}\in\mathscr{A}^{(k,\boldsymbol\gamma)}}\prod\alpha_{i}!=&\sum\limits_{\boldsymbol\gamma\in\mathscr{D}^{(1)}}\frac{(2t)^{\ell(\boldsymbol\gamma)}}{\mathfrak{F}{(\boldsymbol\gamma)}}\sum\limits_{\boldsymbol\alpha=(\alpha_{i})_{1\leq i\leq\sigma(\boldsymbol\gamma)}\in\mathscr{A}^{(k,\boldsymbol\gamma)}}\prod\alpha_{i}!\\
&+\sum\limits_{\boldsymbol\gamma\in\mathscr{D}^{(k)}\backslash\ \mathscr{D}^{(1)}}\frac{(2t)^{\ell(\boldsymbol\gamma)}}{\mathfrak{F}{(\boldsymbol\gamma)}}\sum\limits_{\boldsymbol\alpha=(\alpha_{i})_{1\leq i\leq\sigma(\boldsymbol\gamma)}\in\mathscr{A}^{(k,\boldsymbol\gamma)}}\prod\alpha_{i}!\\
<&1+16t\leq 2.
\end{align*}
Hence we complete the proof of Lemma \ref{L6}.
\end{proof}
\end{lemm}
\begin{coro}\label{C1}
If $0\leq t\leq\frac{\kappa^{\nu}}{32B(24)^{\nu}|\boldsymbol\omega|}$, then
\begin{align*}
|d_{k}(t,\boldsymbol n)|\leq&\sum\limits_{\boldsymbol\gamma\in\mathscr{D}^{(k)}}\sum\limits_{\stackrel{\boldsymbol m^{(k)}\in\mathfrak{M}^{(k,\boldsymbol\gamma)}}{\mu(\boldsymbol m^{(k)})=\boldsymbol n}}B^{\sigma(\boldsymbol\gamma)}\exp(-\frac{\kappa|\boldsymbol m^{(k)}|}{2})\mathfrak{P}(\boldsymbol m^{(k)})\frac{(|\boldsymbol\omega|2t)^{\ell(\boldsymbol\gamma)}}{\mathfrak{F}{(\boldsymbol\gamma)}}\leq 2B
\end{align*}
for some constant $B>0$.
\end{coro}
\begin{proof}
In the proof,  we will apply the identity $\sigma(\boldsymbol\gamma)=\ell(\boldsymbol\gamma)+ 1$. By virtue of formulae \eqref{she} and \eqref{E29} in Lemma \ref{lem2.10}--Lemma \ref{L6}, we derive
\begin{align*}
|d_{k}(t,\boldsymbol n)|=&\Bigg|\sum_{\boldsymbol\gamma\in\mathscr{D}^{(k)}}\sum_{\stackrel{\boldsymbol m^{(k)}\in\mathfrak{M}^{(k,\boldsymbol\gamma)}}{\mu(\boldsymbol m^{(k)})=\boldsymbol n}} \mathfrak{C}^{(k,\boldsymbol\gamma)}(\boldsymbol m^{(k)})f^{(k,\boldsymbol\gamma)}(\boldsymbol m^{(k)})I^{(k,\boldsymbol\gamma)}(t,\boldsymbol m^{(k)})\Bigg|\nonumber\\
\leq&\sum_{\boldsymbol\gamma\in\mathscr{D}^{(k)}}\sum_{\stackrel{\boldsymbol m^{(k)}\in\mathfrak{M}^{(k,\boldsymbol\gamma)}}{\mu(\boldsymbol m^{(k)})=\boldsymbol n}} |\mathfrak{C}^{(k,\boldsymbol\gamma)}(\boldsymbol m^{(k)})||f^{(k,\boldsymbol\gamma)}(\boldsymbol m^{(k)})||I^{(k,\boldsymbol\gamma)}(t,\boldsymbol m^{(k)})|\nonumber\\
\leq&\sum\limits_{\boldsymbol\gamma\in\mathscr{D}^{(k)}}\sum\limits_{\stackrel{\boldsymbol m^{(k)}\in\mathfrak{M}^{(k,\boldsymbol\gamma)}}{\mu(\boldsymbol m^{(k)})=\boldsymbol n} } B^{\sigma(\boldsymbol\gamma)}|\boldsymbol\omega|^{\hbar(\boldsymbol\gamma)}\exp(-\frac{\kappa|\boldsymbol m^{(k)}|}{2})|\boldsymbol\omega|^{\ell(\boldsymbol\gamma)}\mathfrak{P}(\boldsymbol m^{(k)})\frac{(2t)^{\ell(\boldsymbol\gamma)}}{|\boldsymbol\omega|^{\hbar(\boldsymbol\gamma)}\mathfrak{F}{(\boldsymbol\gamma)}}\nonumber\\
=&\sum\limits_{\boldsymbol\gamma\in\mathscr{D}^{(k)}}\sum\limits_{\stackrel{\boldsymbol m^{(k)}\in\mathfrak{M}^{(k,\boldsymbol\gamma)}}{\mu(\boldsymbol m^{(k)})=\boldsymbol n}}B^{\sigma(\boldsymbol\gamma)}\exp(-\frac{\kappa|\boldsymbol m^{(k)}|}{2})\mathfrak{P}(\boldsymbol m^{(k)})\frac{(|\boldsymbol\omega|2t)^{\ell(\boldsymbol\gamma)}}{\mathfrak{F}{(\boldsymbol\gamma)}}\nonumber\\
\leq&2B.
\end{align*}
Thus this ends the proof.
\end{proof}

\textbf{Step 3:}\quad For $0\leq t\leq\frac{\kappa^{\nu}}{(32B(48)^{\nu}|\boldsymbol\omega|}$, it follows from \eqref{E111}, Lemma \ref{L6} and Corollary \ref{C1} that
\begin{align*}
|d_{k}(t,\boldsymbol n)|
\leq2B\exp(-\frac{\kappa|\boldsymbol n|}{4}).
\end{align*}
Hence we have completed the proof of Corollary \ref{C2.2}.
\end{proof}

\section{Existence and uniqueness of the Fourier coefficients}\label{sec:3}

This section is devoted to showing the existence and uniqueness of the Fourier coefficients $c(t,\boldsymbol n)$ associated with the ansatz \eqref{ansatz}. In Section \ref{sec:2}, by the Picard successive approximation method, we have constructed the corresponding iteration sequence $\{c_{k}(t,\boldsymbol n)\},k\geq0$ defined in \eqref{Eb}--\eqref{Ec}. Moreover, according to Lemma \ref{tpl}, we just investigate the absolute and uniform convergence of that the series $\sum^{\infty}_{k=1}\left(d_{k+1}(t,\boldsymbol n)-d_{k}(t,\boldsymbol n)\right)$  converges absolutely and uniformly on the interval $t\in[0,t_{0})$. In this section, we need to consider two problems. One is to give the upper bounds of $|d_{k+1}(t,\boldsymbol n)-d_{k}(t,\boldsymbol n)|$. The other is to prove that the infinite series $\sum_{k=1}^{\infty}(d_{k+1}(t,\boldsymbol n)-d_{k}(t,\boldsymbol n))$  converges  absolutely and uniformly.

We first introduce the following sets
\begin{align*}
\mathbb{I}^{(k)}=\left\{\boldsymbol\alpha\in\mathbb{Z}^{k+1}:\sum_{j}\alpha_{j}=1,\alpha_{j}\geq0\right\},
\end{align*}
and
\begin{align*}
\ \mathbb{B}^{(k)}=
\left\{ \begin{aligned}
&\left\{(\alpha_{1},\alpha_{2})\in\mathbb{Z}^{2}:\alpha_{1}+\alpha_{2}=1,\quad\alpha_{j}\geq0\right\},&&k=1,\\
&~\mathbb{B}^{(k-1)}\times\{0\in\mathbb{Z}\}+\mathbb{I}^{(k)},&&k\geq2.
\end{aligned}\right.
\end{align*}
Notice that for any $\boldsymbol\alpha\in\mathbb{B}^{(k)}$,
\begin{align}\label{Ewyw}
\boldsymbol\alpha\in\mathbb{R}^{k+1}, \quad\sum_{j}\alpha_{j}=k.
\end{align}

\begin{coro}\label{C4}
If $0\leq t\leq\frac{\kappa^{\nu}}{(32B(192)^{\nu}|\boldsymbol\omega|)}$, then
\begin{align*}
|d_{k+1}(t,\boldsymbol n)-d_{k}(t,\boldsymbol n)|\leq B^{k+1}(8e)^k(96)^{k\nu}(\kappa^{-\nu}|\boldsymbol\omega|t)^{k}\exp(-\frac{\kappa}{8}|\boldsymbol n|)\rightarrow 0
\end{align*}
as $k$ tends to $\infty$.
\end{coro}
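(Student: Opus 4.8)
The plan is to reduce the difference to the combinatorial apparatus already assembled for $d_k$, and to extract the smallness in $t$ from the \emph{depth} of the tree branches that genuinely appear in $d_{k+1}$ but not in $d_k$. First I would record that the index sets are nested, $\mathscr{D}^{(k)}\subset\mathscr{D}^{(k+1)}$, and that for a fixed branch $\boldsymbol\gamma$ the objects $\mathfrak{M}^{(k,\boldsymbol\gamma)}$, $\mathfrak{C}(\boldsymbol m^{(k)})$, $f(\boldsymbol m^{(k)})$ and $I(t,\boldsymbol m^{(k)})$ are determined by the tree shape $\boldsymbol\gamma$ alone and are independent of the superscript $k$. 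Hence in \eqref{tp} every $\boldsymbol\gamma\in\mathscr{D}^{(k)}$ contributes identical summands to $d_{k+1}(t,\boldsymbol n)$ and $d_k(t,\boldsymbol n)$, and after cancellation only the new branches remain:
\begin{align*}
d_{k+1}(t,\boldsymbol n)-d_k(t,\boldsymbol n)=\sum_{\boldsymbol\gamma\in\mathscr{D}^{(k+1)}\setminus\mathscr{D}^{(k)}}\ \sum_{\stackrel{\boldsymbol m^{(k+1)}\in\mathfrak{M}^{(k+1,\boldsymbol\gamma)}}{\mu(\boldsymbol m^{(k+1)})=\boldsymbol n}}\mathfrak{C}^{(k+1,\boldsymbol\gamma)}(\boldsymbol m^{(k+1)})f^{(k+1,\boldsymbol\gamma)}(\boldsymbol m^{(k+1)})I^{(k+1,\boldsymbol\gamma)}(t,\boldsymbol m^{(k+1)}).
\end{align*}

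The decisive structural fact is that every new branch is deep: $\ell(\boldsymbol\gamma)\ge k$ for all $\boldsymbol\gamma\in\mathscr{D}^{(k+1)}\setminus\mathscr{D}^{(k)}$. I would prove this by induction on $k$ from \eqref{lse}. Such a $\boldsymbol\gamma$ is necessarily a pair $(\boldsymbol\gamma_1^{(k)},\boldsymbol\gamma_2^{(k)})$ with $\boldsymbol\gamma_1^{(k)},\boldsymbol\gamma_2^{(k)}\in\mathscr{D}^{(k)}$, and they cannot both lie in $\mathscr{D}^{(k-1)}$, for otherwise $\boldsymbol\gamma\in\mathscr{D}^{(k-1)}\times\mathscr{D}^{(k-1)}\subset\mathscr{D}^{(k)}$; thus at least one component belongs to $\mathscr{D}^{(k)}\setminus\mathscr{D}^{(k-1)}$ and has $\ell\ge k-1$, whence $\ell(\boldsymbol\gamma)=\ell(\boldsymbol\gamma_1^{(k)})+\ell(\boldsymbol\gamma_2^{(k)})+1\ge k$. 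Since $0\le 2|\boldsymbol\omega|t<1$ on the prescribed interval, the combined termwise bound of Lemma \ref{lem2.5} (used exactly as in Corollary \ref{C1}, so that the powers $|\boldsymbol\omega|^{\hbar(\boldsymbol\gamma)}$ cancel) carries the factor $(2|\boldsymbol\omega|t)^{\ell(\boldsymbol\gamma)}$ with $\ell(\boldsymbol\gamma)\ge k$; splitting it as $(2|\boldsymbol\omega|t)^{k}(2|\boldsymbol\omega|t)^{\ell(\boldsymbol\gamma)-k}$ is what will produce the announced power $(\kappa^{-\nu}|\boldsymbol\omega|t)^{k}$.

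With these two ingredients I would rerun the estimate of Corollary \ref{C1} over the restricted branch set, obtaining
\begin{align*}
|d_{k+1}(t,\boldsymbol n)-d_k(t,\boldsymbol n)|\le\sum_{\boldsymbol\gamma\in\mathscr{D}^{(k+1)}\setminus\mathscr{D}^{(k)}}\frac{(2|\boldsymbol\omega|t)^{\ell(\boldsymbol\gamma)}}{\mathfrak{F}(\boldsymbol\gamma)}\sum_{\stackrel{\boldsymbol m^{(k+1)}\in\mathfrak{M}^{(k+1,\boldsymbol\gamma)}}{\mu(\boldsymbol m^{(k+1)})=\boldsymbol n}}B^{\sigma(\boldsymbol\gamma)}\exp\Big(-\frac{\kappa|\boldsymbol m^{(k+1)}|}{2}\Big)\mathfrak{P}(\boldsymbol m^{(k+1)}).
\end{align*}
For the inner lattice sum I would repeat the computation behind \eqref{E111}, but reserve more spatial decay: using $|\boldsymbol m^{(k+1)}|\ge|\boldsymbol n|$ and $\exp(-\kappa|\boldsymbol m^{(k+1)}|/2)\le\exp(-\kappa|\boldsymbol n|/8)\exp(-3\kappa|\boldsymbol m^{(k+1)}|/8)$, and then running the factorial absorption and geometric summation of Lemma \ref{lem2.10} on the residual rate, replaces the constant $(48/\kappa)^{\sigma(\boldsymbol\gamma)\nu}$ by $(96/\kappa)^{\sigma(\boldsymbol\gamma)\nu}$ and leaves the surplus $\exp(-\kappa|\boldsymbol n|/8)$ outside. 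Invoking $\sigma(\boldsymbol\gamma)=\ell(\boldsymbol\gamma)+1$ and pulling out $B^{k+1}$, $(2|\boldsymbol\omega|t)^{k}$ and $(96/\kappa)^{(k+1)\nu}$, the whole estimate reduces to bounding the purely combinatorial new-branch sum
\begin{align*}
\sum_{\boldsymbol\gamma\in\mathscr{D}^{(k+1)}\setminus\mathscr{D}^{(k)}}\frac{\big(2B(96/\kappa)^{\nu}|\boldsymbol\omega|t\big)^{\ell(\boldsymbol\gamma)-k}}{\mathfrak{F}(\boldsymbol\gamma)}\sum_{\boldsymbol\alpha\in\mathscr{A}^{(k+1,\boldsymbol\gamma)}}\prod_i\alpha_i!
\end{align*}
by a quantity of order $(8e)^k$, after which collecting the constants yields precisely $B^{k+1}(8e)^k(96)^{k\nu}(\kappa^{-\nu}|\boldsymbol\omega|t)^k\exp(-\kappa|\boldsymbol n|/8)$. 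Finally, the prescribed interval $t\le\kappa^{\nu}/(32B(192)^{\nu}|\boldsymbol\omega|)$ forces $B(8e)(96)^{\nu}\kappa^{-\nu}|\boldsymbol\omega|t<1$, so the bound is $B$ times a fixed power $<1$ and tends to $0$ as $k\to\infty$.

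The main obstacle is the last, combinatorial estimate. The number of branches of prescribed depth grows like a Catalan number (of order $4^{k}$), and the weights $\sum_{\boldsymbol\alpha\in\mathscr{A}^{(k+1,\boldsymbol\gamma)}}\prod_i\alpha_i!$ introduce factorial numerators; the point is that these are compensated by $1/\mathfrak{F}(\boldsymbol\gamma)$, whose recursive definition makes it a product of subtree sizes and hence a factorial-type denominator, so that by Stirling's bound $k!\sim(k/e)^{k}$ the net growth collapses to $(8e)^{k}$. Technically this is carried out by the same recursion as in the proof of Lemma \ref{L6} (built on $\sigma(\boldsymbol\gamma)=\ell(\boldsymbol\gamma)+1$ and the derivative identity for $\sum_{j_0}(\ell(\boldsymbol\gamma_{j_0}^{(k-1)})+\sigma(\boldsymbol\gamma_{j_0}^{(k-1)}))$), except that one now tracks the leading power of $t$ instead of only the uniform bound; keeping the numerical constants consistent so that they land exactly on $(8e)^k(96)^{k\nu}$, and verifying that the stated time interval makes the common ratio strictly less than $1$, is the delicate bookkeeping, while everything else is a direct transcription of Corollaries \ref{C1}--\ref{C2.2}.
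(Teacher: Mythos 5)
Your decomposition is genuinely different from the paper's. The paper never cancels tree branches: it returns to the Picard recursion \eqref{Ec}, writes $d_kd_k-d_{k-1}d_{k-1}=(d_k-d_{k-1})d_k+(d_k-d_{k-1})d_{k-1}$, and inducts on $k$ (Lemma~\ref{L7}), so that the factor $(|\boldsymbol\omega|t)^k/k!$ arises from iterated time integration combined with the uniform bound of Corollary~\ref{C2.2}; the combinatorial weight is then carried by the comb-like sets $\mathbb{B}^{(k)}$, bounded by Lemma~\ref{L9}, and Stirling's formula converts $(2(k+1))^k/k!$ into $(2e)^k$. By contrast, your two structural observations are correct and do not appear in the paper: the objects $\mathfrak{M}^{(k,\boldsymbol\gamma)}$, $\mathfrak{C}$, $f$, $I$ depend only on the branch $\boldsymbol\gamma$ (this is implicit in the proof of Lemma~\ref{tpl}), so $d_{k+1}-d_k$ is exactly the sum over $\mathscr{D}^{(k+1)}\setminus\mathscr{D}^{(k)}$; and every new branch satisfies $\ell(\boldsymbol\gamma)\geq k$, with a sound induction. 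This route would make Corollary~\ref{C4} a consequence of the Section~2 machinery alone, which is attractive.

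The gap is precisely in the step you flag as the main obstacle: the bound on $\sum_{\boldsymbol\gamma\in\mathscr{D}^{(k+1)}\setminus\mathscr{D}^{(k)}}\frac{(2|\boldsymbol\omega|t)^{\ell(\boldsymbol\gamma)}}{\mathfrak{F}(\boldsymbol\gamma)}\sum_{\boldsymbol\alpha\in\mathscr{A}^{(k+1,\boldsymbol\gamma)}}\prod_i\alpha_i!$ is asserted, not proved, and the heuristic you offer for it is incorrect. The new-branch set is not of Catalan size $4^k$: since $|\mathscr{D}^{(k+1)}|=2+|\mathscr{D}^{(k)}|^2$, its cardinality is doubly exponential in $k$, and it contains branches of every depth $\ell$ with $k\leq\ell\leq 2^k-1$; moreover $\mathfrak{F}(\boldsymbol\gamma)$ is factorial-sized only for comb-shaped trees, so a count-times-worst-term argument finished by Stirling cannot close the estimate. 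What does close it is the recursion behind Lemma~\ref{L6}, which you cite but do not run. Concretely, let $S_k$ be the full sum of Lemma~\ref{L6} (so $S_k\leq2$ when $2t\leq 1/8$) and $T_{k+1}$ the same sum restricted to $\mathscr{D}^{(k+1)}\setminus\mathscr{D}^{(k)}$. Using the inclusion $\mathscr{D}^{(k+1)}\setminus\mathscr{D}^{(k)}\subset\bigl[(\mathscr{D}^{(k)}\setminus\mathscr{D}^{(k-1)})\times\mathscr{D}^{(k)}\bigr]\cup\bigl[\mathscr{D}^{(k)}\times(\mathscr{D}^{(k)}\setminus\mathscr{D}^{(k-1)})\bigr]$ and exactly the manipulation of Case~3 of Lemma~\ref{L6} (via $\sigma(\boldsymbol\gamma)=\ell(\boldsymbol\gamma)+1$ and $\sum_{j_0}(\ell(\boldsymbol\gamma_{j_0})+\sigma(\boldsymbol\gamma_{j_0}))=2\ell(\boldsymbol\gamma)$), one gets $T_{k+1}\leq 4t\,(T_kS_k+S_kT_k)\leq 16t\,T_k$, hence $T_{k+1}\leq(16t)^{k}$; applied with $t$ replaced by $B(96/\kappa)^{\nu}|\boldsymbol\omega|t$, which on the stated interval is at most $1/32$, this yields the claimed geometric decay. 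That recursion completes your argument --- and in fact renders both your explicit depth bound $\ell(\boldsymbol\gamma)\geq k$ and the Stirling step unnecessary, since the recursion encodes them automatically --- but as submitted, your proposal establishes only the easy structural facts and replaces the quantitative heart of the proof by an erroneous counting argument.
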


\begin{proof}

The proof will be  divided into the following four steps.

\textbf{Step 1:}\quad We first show that the terms $|d_{k+1}(t,\boldsymbol n)-d_{k}(t,\boldsymbol n)|$ can be bounded from above by the variables $\boldsymbol m_{j}$.
\begin{lemm}\label{L7}
For $0\leq t\leq\frac{\kappa^{\nu}}{32B(48)^{\nu}|\boldsymbol\omega|}$, one has
\begin{align}\label{E33}
|d_{k+1}(t,\boldsymbol n)-d_{k}(t,\boldsymbol n)|\leq\frac{2^{2k}B^{k+1}(|\boldsymbol\omega|t)^{k}}{k!}\sum\limits_{\stackrel{{\boldsymbol m}=(\boldsymbol m_{1},\cdots,\boldsymbol m_{k+1})\in\mathbb{Z}^{(k+1)\nu}}{\sum_{j}\boldsymbol m_{j}=\boldsymbol n}}
\sum\limits_{\boldsymbol\alpha\in\mathbb{B}^{(k)}}\prod\limits_{j}|\boldsymbol m_{j}|^{\alpha_{j}}\exp(-\frac{\kappa}{4}|\boldsymbol m_{j}|).
\end{align}
\begin{proof}
According to Corollary \ref{C2.2} and \eqref{tpq}--\eqref{gll}, we will prove the lemma by induction.

For $k=1$, we derive
\begin{align*}
|d_{2}(t,\boldsymbol n)-d_{1}(t,\boldsymbol n)|&\leq|\boldsymbol n||\boldsymbol\omega|\int_{0}^{t}\sum_{\stackrel{\boldsymbol m_{1},\boldsymbol m_{2}\in \mathbb{Z}^{\nu}}{\boldsymbol m_{1}+\boldsymbol m_{2}=\boldsymbol n}}|d_{1}(\tau,\boldsymbol m_{1})||d_{1}(\tau,\boldsymbol m_{2})|{\rm d}\tau\\
&\leq 4B^{2}t|\boldsymbol\omega|\sum_{\stackrel{\boldsymbol m_{1},\boldsymbol m_{2}\in \mathbb{Z}^{\nu}}{\boldsymbol m_{1}+\boldsymbol m_{2}=\boldsymbol n}}|\sum_{j}\boldsymbol m_{j}|\exp(-\frac{\kappa(|\boldsymbol m_{1}|+|\boldsymbol m_{2}|)}{4}).
\end{align*}
Hence \eqref{E33} holds for $k=1$.

Suppose that \eqref{E33} could hold for any $1\leq k^{\prime}\leq k-1$ with $k\geq2$. Note that
\begin{align*}
&|d_{k+1}(t,\boldsymbol n)-d_{k}(t,\boldsymbol n)|\\
\leq&{|\boldsymbol n||\boldsymbol\omega|}\int_{0}^{t}\sum_{\stackrel{\boldsymbol m_{1},\boldsymbol m_{2}\in \mathbb{Z}^{\nu}}{\boldsymbol m_{1}+\boldsymbol m_{2}=\boldsymbol n}}|d_{k}(\tau,\boldsymbol m_{1})d_{k}(\tau,\boldsymbol m_{2})-d_{k-1}(\tau,\boldsymbol m_{1})d_{k-1}(\tau,\boldsymbol m_{2})|{\rm d}\tau\\
\leq&{|\boldsymbol n||\boldsymbol\omega|}\int_{0}^{t}\sum_{\stackrel{\boldsymbol m_{1},\boldsymbol m_{2}\in \mathbb{Z}^{\nu}}{\boldsymbol m_{1}+\boldsymbol m_{2}=\boldsymbol n}}|d_{k}(\tau,\boldsymbol m_{1})-d_{k-1}(\tau,\boldsymbol m_{1})||d_{k}(\tau,\boldsymbol m_{2})|{\rm d}\tau\\
&+{|\boldsymbol n||\boldsymbol\omega|}\int_{0}^{t}\sum_{\stackrel{\boldsymbol m_{1},\boldsymbol m_{2}\in \mathbb{Z}^{\nu}}{\boldsymbol m_{1}+\boldsymbol m_{2}=\boldsymbol n}}|d_{k}(\tau,\boldsymbol m_{2})-d_{k-1}(\tau,\boldsymbol m_{2})||d_{k-1}(\tau,\boldsymbol m_{1})|{\rm d}\tau.\\
\end{align*}
Using the inductive assumption yields
\begin{align*}
&|\boldsymbol n||\boldsymbol\omega|\int_{0}^{t}\sum_{\stackrel{\boldsymbol n_{1},\boldsymbol n_{2}\in \mathbb{Z}^{\nu}}{\boldsymbol n_{1}+\boldsymbol n_{2}=\boldsymbol n}}|d_{k}(\tau,\boldsymbol n_{1})-d_{k-1}(\tau,\boldsymbol n_{1})||d_{k}(\tau,\boldsymbol n_{2})|{\rm d}\tau\\
&\leq|\boldsymbol\omega|\int_{0}^{t}\sum_{\stackrel{\boldsymbol n_{1},\boldsymbol n_{2}\in \mathbb{Z}^{\nu}}{\boldsymbol n_{1}+\boldsymbol n_{2}=\boldsymbol n}}\frac{2^{2(k -1)}B^{k}(|\boldsymbol\omega|\tau)^{k-1}}{(k-1)!}\sum\limits_{\stackrel{{\boldsymbol m}=(\boldsymbol m_{1},\cdots,\boldsymbol m_{k})\in\mathbb{Z}^{k\nu}}{\sum_{j}\boldsymbol m_{j}=\boldsymbol n_{1}}}|(\sum_{j}\boldsymbol m_{j})+\boldsymbol n_{2}|\\
&\qquad\qquad\qquad\qquad\times\sum\limits_{\boldsymbol\alpha\in\mathbb{B}^{(k-1)}}\prod\limits_{j}|\boldsymbol m_{j}|^{\alpha_{j}}\exp(-\frac{\kappa}{4}|\boldsymbol m_{j}|)(2B\exp(-\frac{\kappa}{4}|\boldsymbol n_{2}|)){\rm d}\tau\\
&\leq\frac{2^{(2k-1)}B^{k+1}(|\boldsymbol\omega|t)^{k}}{k!}\sum\limits_{\stackrel{{\boldsymbol m}=(\boldsymbol m_{1},\cdots,\boldsymbol m_{k+1})\in\mathbb{Z}^{(k+1)\nu}}{\sum_{j}\boldsymbol m_{j}=\boldsymbol n}}\sum\limits_{\boldsymbol\alpha\in\mathbb{B}^{k}}\prod\limits_{j}|\boldsymbol m_{j}|^{\alpha_{j}}\exp(-\frac{\kappa}{4}|\boldsymbol m_{j}|).
\end{align*}
Similarly, we conclude
\begin{align*}
&|\boldsymbol n||\boldsymbol\omega|\int_{0}^{t}\sum_{\stackrel{\boldsymbol n_{1},\boldsymbol n_{2}\in \mathbb{Z}^{\nu}}{\boldsymbol n_{1}+\boldsymbol n_{2}=\boldsymbol n}}|d_{k}(\tau,\boldsymbol n_{2})-d_{k-1}(\tau,\boldsymbol n_{2})||d_{k-1}(\tau,\boldsymbol n_{1})|{\rm d}\tau\\
&\leq\frac{2^{(2k-1)} B^{k+1}(|\boldsymbol\omega|t)^{k}}{k!}\sum\limits_{\stackrel{{\boldsymbol m}=(\boldsymbol m_{1},\cdots,\boldsymbol m_{k+1})\in\mathbb{Z}^{(k+1)\nu}}{\sum_{j}\boldsymbol m_{j}=\boldsymbol n}}\sum\limits_{\boldsymbol\alpha\in\mathbb{B}^{(k)}}\prod\limits_{j}|\boldsymbol m_{j}|^{\alpha_{j}}\exp(-\frac{\kappa}{4}|\boldsymbol m_{j}|).
\end{align*}
Thus we can get \eqref{E33}. The proof of the lemma is now completed.
\end{proof}
\end{lemm}

\textbf{Step 2 :} \quad The next goal is to give the upper bounds of $|d_{k+1}(t,\boldsymbol n)-d_{k}(t,\boldsymbol n)|$ with respect to the new variables $\alpha_{j}$.

\begin{coro}\label{C3}
For $0\leq t\leq\frac{\kappa^{\nu}}{32B(48)^{\nu}|\boldsymbol\omega|}$, one has
\begin{align}\label{E34}
|d_{k+1}(t,\boldsymbol n)-d_{k}(t,\boldsymbol n)|\leq \frac{B^{k+1}4^k(96)^{\nu k}(\kappa^{-\nu}|\boldsymbol\omega|t)^{k}}{k!}\exp(-\frac{\kappa}{8}|\boldsymbol n|)\sum\limits_{\boldsymbol\alpha\in\mathbb{B}^{(k)}}\prod\limits_{j}\alpha_{j}!.
\end{align}
\begin{proof}
Due to Lemma \ref{L7}, we have
\begin{align*}
&|d_{k+1}(t,\boldsymbol n)-d_{k}(t,\boldsymbol n)|\\
&\leq\frac{2^{2k}B^{k+1}(|\boldsymbol\omega|t)^{k}}{k!}\sum\limits_{\stackrel{{\boldsymbol m}=(\boldsymbol m_{1},\cdots,\boldsymbol m_{k+1})\in\mathbb{Z}^{(k+1)\nu}}{\sum_{j}\boldsymbol m_{j}=\boldsymbol n}}
\sum\limits_{\boldsymbol\alpha\in\mathbb{B}^{(k)}}\prod\limits_{j}|\boldsymbol m_{j}|^{\alpha_{j}}\exp(-\frac{\kappa}{4}|\boldsymbol m_{j}|)\\
&\leq\frac{2^{2k}B^{k+1}(|\boldsymbol\omega|t)^{k}}{k!}\exp(-\frac{\kappa}{8}|\boldsymbol n|)\sum\limits_{\stackrel{{\boldsymbol m}=(\boldsymbol m_{1},\cdots,\boldsymbol m_{k+1})\in\mathbb{Z}^{(k+1)\nu}}{\sum_{j}\boldsymbol m_{j}=\boldsymbol n}}\sum\limits_{\boldsymbol\alpha\in\mathbb{B}^{(k)}}\prod\limits_{j}|\boldsymbol m_{j}|^{\alpha_{j}}\exp(-\frac{\kappa}{8}|\boldsymbol m_{j}|)\\
&\leq\frac{2^{2k}B^{k+1}(|\boldsymbol\omega|t)^{k}}{k!}\exp(-\frac{\kappa}{8}|\boldsymbol n|)\sum\limits_{\stackrel{{\boldsymbol m}=(\boldsymbol m_{1},\cdots,\boldsymbol m_{k+1})\in\mathbb{Z}^{(k+1)\nu}}{|{\boldsymbol m}|\geq \boldsymbol n}}\sum\limits_{\boldsymbol\alpha\in\mathbb{B}^{(k)}}\prod\limits_{j}|\boldsymbol m_{j}|^{\alpha_{j}}\exp(-\frac{\kappa}{8}|\boldsymbol m_{j}|)\\
\end{align*}
By the proving procedure of Lemma \ref{lem2.10}, combining this with \eqref{Ewyw} gives that \eqref{E34} holds.
\end{proof}
\end{coro}

\textbf{Step 3:}\quad We further to estimate the sum $\sum_{\boldsymbol\alpha\in\mathbb{B}^{(k)}}\prod_{j}\alpha_{j}!$  in the right hand side of \eqref{E34}. For this, let us introduce some notation.

For any $N$,$l$, we define
\begin{align*}
&\mathfrak{A}_{N}(l)=\left\{\boldsymbol\alpha=(\alpha_{1},\cdots,\alpha_{N})\in\mathbb{Z}^{N}:\alpha_{j}\geq0,\sum_{j}\alpha_{j}=l\right\},\\
&\mathfrak{A}_{N}(l-1)=\left\{\boldsymbol\alpha=(\alpha_{1},\cdots,\alpha_{N})\in\mathbb{Z}^{N}:\alpha_{j}\geq0,\sum_{j}\alpha_{j}=l-1\right\}.
\end{align*}
Given $\boldsymbol\alpha=(\alpha_{1},\cdots,\alpha_{N})\in\mathbb{Z}^{N}$, denote by $\Psi$ the following mapping
\begin{align*}
\Psi(\boldsymbol\alpha)=(\psi_{1}(\boldsymbol\alpha),\cdots,\psi_{N}(\boldsymbol\alpha)),
\end{align*}
where
\begin{align*}
\ \mathfrak{\psi}_{j}(\boldsymbol\alpha)= \left\{ \begin{aligned}
&\alpha_{j}\qquad &\text{if}~j\neq j_{1}(\boldsymbol\alpha),\\
&\alpha_{j_{1}(\boldsymbol\alpha)}-1
\qquad &\text{if}~j= j_{1}(\boldsymbol\alpha).
\end{aligned}\right.
\end{align*}
Note that $j_{1}(\boldsymbol\alpha)$ is the subscript of the smallest component of $\Psi(\boldsymbol\alpha)$.
Therefore $\Psi$ maps from $\mathfrak{A}_{N}(l)$ into $\mathfrak{A}_{N}(l-1)$. Moreover, set
\begin{align*}
&\mathfrak{A}^{\prime}_{N}(l)=\left\{\boldsymbol\alpha\in\mathfrak{A}_{N}(l):\alpha_{j_{1}(\boldsymbol\alpha)}>1\right\},\\
&\mathfrak{A}^{\prime\prime}_{N}(l)=\mathfrak{A}_{N}(l)\backslash\mathfrak{A}^{\prime}_{N}(l)=\left\{\boldsymbol\alpha\in\mathfrak{A}_{N}(l):0\leq\alpha_{j_{1}(\boldsymbol\alpha)}\leq1\right\}.
\end{align*}
It is clear that $\Psi$ is an injective mapping on $\mathfrak{A}^{\prime}_{N}(l)$ and $\mathrm{card} (\Psi^{-1}(\boldsymbol\beta))\leq N $ for any $\boldsymbol\beta$. Hence we have the following fact.

\begin{lemm}\label{L9}
For any $l\leq N$, one has
\begin{align*}
\sum\limits_{\boldsymbol\alpha=(\alpha_{1},\cdots,\alpha_{N})\in\mathfrak{A}_{N}(l)}\prod\limits_{i}\alpha_{i}!<(2N)^{l}.
\end{align*}
\begin{proof}
It can be seen that
\begin{align*}
\sum\limits_{\boldsymbol\alpha=(\alpha_{1},\cdots,\alpha_{N})\in\mathfrak{A}_{N}(l)}\prod\limits_{i}\alpha_{i}!=&\sum\limits_{\boldsymbol\alpha=
(\alpha_{1},\cdots,\alpha_{N})\in\mathfrak{A}_{N}(l)}\alpha_{j_{1}(\boldsymbol\alpha)}!\prod\limits_{i\neq j_{1}(\boldsymbol\alpha)}\alpha_{i}!\nonumber\\
=&\sum\limits_{\boldsymbol\alpha=(\alpha_{1},\cdots,\alpha_{N})\in\mathfrak{A}_{N}(l)}\alpha_{j_{1}(\boldsymbol\alpha)}(\alpha_{j_{1}(\boldsymbol\alpha)}-1)!\prod\limits_{i\neq j_{1}(\boldsymbol\alpha)}\alpha_{i}!\nonumber\\
=&\sum\limits_{\boldsymbol\alpha=(\alpha_{1},\cdots,\alpha_{N})\in\mathfrak{A}_{N}(l)}\alpha_{j_{1}(\boldsymbol\alpha)}\prod\limits_{i}\phi_{i}(\boldsymbol\alpha)!\nonumber\\
=&\sum\limits_{\boldsymbol\alpha\in\mathfrak{A}^{\prime}_{N}(l)}\alpha_{j_{1}(\boldsymbol\alpha)}\prod\limits_{i}\phi_{i}(\boldsymbol\alpha)!
+\sum\limits_{\boldsymbol\alpha\in\mathfrak{A}^{\prime\prime}_{N}(l)}\alpha_{j_{1}(\boldsymbol\alpha)}\prod\limits_{i}\phi_{i}(\boldsymbol\alpha)!\nonumber\\
=&\sum\limits_{\boldsymbol\alpha\in\mathfrak{A}^{\prime}_{N}(l)}\alpha_{j_{1}(\boldsymbol\alpha)}\prod\limits_{i}\phi_{i}(\boldsymbol\alpha)!
+\sum\limits_{\boldsymbol\alpha\in\mathfrak{A}^{\prime\prime}_{N}(l)}\prod\limits_{i}\phi_{i}(\boldsymbol\alpha)!\nonumber\\
\leq&l\sum\limits_{\boldsymbol\beta\in\mathfrak{A}_{N}(l-1)}\prod\limits_{i}\beta_{i}!+N\sum\limits_{\boldsymbol\beta\in\mathfrak{A}_{N}(l-1)}\prod\limits_{i}\beta_{i}!\nonumber\\
\leq&(l+N)\sum\limits_{\boldsymbol\alpha=(\alpha_{1},\ldots,\alpha_{N})\in\mathfrak{A}_{N}(l-1)}\prod\limits_{i}\alpha_{i}!.
\end{align*}
By induction, we obtain that for $l\leq N$,
\begin{align*}
\sum\limits_{\boldsymbol\alpha=(\alpha_{1},\cdots,\alpha_{N})\in\mathfrak{A}_{N}(l-1)}\prod\limits_{i}\alpha_{i}!\leq&(l-1+N)
\sum\limits_{\boldsymbol\alpha=(\alpha_{1},\cdots,\alpha_{N})\in\mathfrak{A}_{N}(l-2)}\prod\limits_{i}\alpha_{i}!\nonumber\\
\leq&(l-1+N)(l-2+N)\sum\limits_{\boldsymbol\alpha=(\alpha_{1},\cdots,\alpha_{N})\in\mathfrak{A}_{N}(l-3)}\prod\limits_{i}\alpha_{i}!\nonumber\\
\leq&\cdots\nonumber\\
\leq&(l-1+N)(l-2+N)\cdots(N+1)\sum\limits_{\boldsymbol\alpha=(\alpha_{1},\cdots,\alpha_{N})\in\mathfrak{A}_{N}(0)}\prod\limits_{i}\alpha_{i}!\nonumber\\
\leq&(l-1+N)(l-2+N)\cdots(N+1)
\leq(2N)^{l-1}.
\end{align*}
This shows that
\begin{align*}
\sum\limits_{\boldsymbol\alpha=(\alpha_{1},\cdots,\alpha_{N})\in\mathfrak{A}_{N}(l)}\prod\limits_{i}\alpha_{i}!\leq(l+N)(2N)^{l-1}\leq(2N)^{l}.
\end{align*}
This ends the proof of the lemma
\end{proof}
\end{lemm}

\textbf{Step 4:}\quad 
We are now turning to  the proof of Corollary \ref{C4}. It follows from Corollary \ref{C3} that
\begin{align*}
|d_{k+1}(t,\boldsymbol n)-d_{k}(t,\boldsymbol n)|\leq \frac{B^{k+1}4^k(96)^{k\nu}(\kappa^{-\nu}|\boldsymbol\omega|t)^{k}}{k!}\exp(-\frac{\kappa}{8}|\boldsymbol n|)\sum\limits_{\boldsymbol\alpha\in\mathbb{B}^{(k)}}\prod\limits_{j}\alpha_{j}!.\\
\end{align*}
Consequently, according to formula \eqref{Ewyw} and Lemma \ref{L9} with $N=k+1$, $l=k$, we can obtain
\begin{align}\label{E37}
\sum\limits_{\boldsymbol\alpha\in\mathbb{B}^{(k)}}\prod\limits_{j}\alpha_{j}!\leq \sum\limits_{\boldsymbol\alpha=(\alpha_{1},\cdots,\alpha_{N})\in\mathfrak{A}_{N}(N)}\prod\limits_{i}\alpha_{i}!<(2N)^{k}.
\end{align}
Moreover, because of Stirling's formulae
\begin{align*}
k!\gtrsim k^{k}e^{-k},\quad \quad (k!)^{-1}(2N)^{k}\lesssim(2e)^{k},
\end{align*}
one has
\begin{align*}
|d_{k+1}(t,\boldsymbol n)-d_{k}(t,\boldsymbol n)|\leq&\frac{B^{k+1}4^k(96)^{\nu k}(\kappa^{-\nu}|\boldsymbol\omega|t)^{k}}{k!}\exp(-\frac{\kappa}{8}|\boldsymbol n|)\sum\limits_{\boldsymbol\alpha\in\mathbb{B}^{(k)}}\prod\limits_{j}\alpha_{j}!\nonumber\\
\leq&\frac{B^{k+1}4^k(96)^{\nu k}(\kappa^{-\nu}|\boldsymbol\omega|t)^{k}}{k!}\exp(-\frac{\kappa}{8}|\boldsymbol n|)(2N)^{k}\nonumber\\
\leq&B^{k+1}(8e)^k(96)^{\nu k}(\kappa^{-\nu}|\boldsymbol\omega|t)^{k}\exp(-\frac{\kappa}{8}|\boldsymbol n|).
\end{align*}
Hence we complete the proof of Corollary \ref{C4}.
\end{proof}

In view of Corollary \ref{C4}, the infinite series $\sum^{\infty}_{k=1}\left(d_{k+1}(t,\boldsymbol n)-d_{k}(t,\boldsymbol n)\right)$ converges absolutely and uniformly for any $0\leq t\leq\frac{\kappa^{\nu}}{32B2^{\nu}(96)^{\nu}|\boldsymbol\omega|}$.

The following corollary will give  uniqueness of the Fourier coefficients $c(t,\boldsymbol n)$ associated with the ansatz \eqref{ansatz}. Suppose that there could be two solutions for the ``good" Boussinesq equation \eqref{ne} with quasi-periodic initial data \eqref{seq}--\eqref{seq2}. We will compare the corresponding Fourier coefficients. For this, we need a priori exponential decay estimate for the decay of the coefficients. This is why we invoke the estimation of the sums of the ``new variables'' $\alpha_{j}$, which require exponential decay.


We assume that $u$, $v$ could be two solutions for the ``good" Boussinesq equation \eqref{ne}. Moreover suppose that $u,v$ could have the following expansions
\begin{align*}
u(t,x)=\sum_{\boldsymbol n\in \mathbb{Z}^{\nu}}c(t,\boldsymbol n)\exp({\rm i}x\boldsymbol n\cdot\boldsymbol \omega),\quad v(t,x)=\sum_{\boldsymbol n\in\mathbb{Z}^{\nu}}h(t,\boldsymbol n)\exp({\rm i}x\boldsymbol n\cdot\boldsymbol \omega).
\end{align*}

\begin{coro}\label{C6}
For some constant $t_0>0$, let $c(t,\boldsymbol n), h(t,\boldsymbol n)$ be functions of $t\in[0,t_{0})$, $\boldsymbol n\in\mathbb{Z}^{\nu}$ satisfying that for $C_{1}, \rho>0$,
\begin{align*}
 |c(t,\boldsymbol n)|\leq C_{1}\exp(-\rho|\boldsymbol n|), \quad |h(t,\boldsymbol n)|\leq C_{1}\exp(-\rho|\boldsymbol n|),\quad \boldsymbol \forall\boldsymbol n\in\mathbb{Z}^{\nu},
\end{align*}
and
\begin{align*}
c(\boldsymbol n)=c(0,\boldsymbol n)=h(0,\boldsymbol n)=h(\boldsymbol n),\quad  c^{\prime}(\boldsymbol n)=c^{\prime}(0,\boldsymbol n)=h^{\prime}(0,\boldsymbol n)=h^{\prime}(\boldsymbol n), \quad \forall \boldsymbol n\in\mathbb{Z}^{\nu}.
\end{align*}
If we assume
\begin{align}\label{E50}
c(t,\boldsymbol n)=&\frac{1}{2}c(\boldsymbol n)\left(\exp\left( {\rm i}t\lambda\right)+\exp \left(-{\rm i}t\lambda\right)\right)
-\frac{{\rm i}}{2\lambda}c'(\boldsymbol n)\left(\exp\left( {\rm i}t\lambda\right)-\exp\left(-{\rm i}t\lambda\right)\right)\nonumber\\
&-\frac{1}{\sqrt{1+(\boldsymbol n\cdot\boldsymbol \omega)^{2}}}\sum_{\boldsymbol m}\int_{0}^{t}\left(\exp\left({\rm i}(\tau-t)\lambda\right)
-\exp\left({\rm i}(t-\tau)\lambda\right)\right)\nonumber\\
&\quad\quad\qquad\qquad\qquad\qquad\times({\rm i}\boldsymbol m\cdot\boldsymbol \omega)c(\tau,\boldsymbol m)c(\tau,\boldsymbol n-\boldsymbol m){\rm d}\tau,\quad\boldsymbol n\in\mathbb{Z}^{\nu},\\
h(t,\boldsymbol n)=&\frac{1}{2}h(\boldsymbol n)\left(\exp\left( {\rm i}t\lambda\right)+\exp \left(-{\rm i}t\lambda\right)\right)
- \frac{{\rm i}}{2\lambda}h'(\boldsymbol n)\left(\exp\left( {\rm i}t\lambda\right)-\exp\left(-{\rm i}t\lambda\right)\right)\nonumber\\
&-\frac{1}{\sqrt{1+(\boldsymbol n\cdot\boldsymbol \omega)^{2}}}\sum_{\boldsymbol m}\int_{0}^{t}\left(\exp\left({\rm i}(\tau-t)\lambda\right)
-\exp\left({\rm i}(t-\tau)\lambda\right)\right)\nonumber\\
&\quad\quad\quad\quad\quad\quad\quad\quad\quad\quad\times({\rm i}\boldsymbol m\cdot\boldsymbol \omega)h(\tau,\boldsymbol m)h(\tau,\boldsymbol n-\boldsymbol m){\rm d}\tau,\quad\boldsymbol n\in\mathbb{Z}^{\nu},\label{E51}
\end{align}
where $\lambda=\left((\boldsymbol n\cdot\boldsymbol\omega)^{2}+(\boldsymbol n\cdot\boldsymbol\omega)^{4}\right)^{\frac{1}{2}}$ with $\boldsymbol\omega\in\mathbb{R}^{\nu}$, then there exists $t_{1}=\min(t_{0},\frac{\rho^{\nu}}{C_{1}2^{\nu+1}(288)^{\nu}|\boldsymbol\omega|})$ such that for all $\boldsymbol n\in\mathbb{Z}^{\nu}$, $0<t\leq t_{1}$, one has $c(t,\boldsymbol n)=h(t,\boldsymbol n)$.
\end{coro}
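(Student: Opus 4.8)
The plan is to run the difference of the two integral equations through exactly the iteration-and-combinatorics machinery of Sections \ref{sec:2}--\ref{sec:3}. Set $w(t,\boldsymbol n)=c(t,\boldsymbol n)-h(t,\boldsymbol n)$. Since $c(\boldsymbol n)=h(\boldsymbol n)$ and $c'(\boldsymbol n)=h'(\boldsymbol n)$, subtracting \eqref{E51} from \eqref{E50} annihilates the two homogeneous terms, so $w(0,\boldsymbol n)=0$ and, using the bilinear identity $c(\boldsymbol m)c(\boldsymbol n-\boldsymbol m)-h(\boldsymbol m)h(\boldsymbol n-\boldsymbol m)=w(\boldsymbol m)c(\boldsymbol n-\boldsymbol m)+h(\boldsymbol m)w(\boldsymbol n-\boldsymbol m)$,
\begin{align*}
w(t,\boldsymbol n)=-\frac{1}{\sqrt{1+(\boldsymbol n\cdot\boldsymbol\omega)^{2}}}\sum_{\boldsymbol m}\int_{0}^{t}\left(\exp\left({\rm i}(\tau-t)\lambda\right)-\exp\left({\rm i}(t-\tau)\lambda\right)\right)({\rm i}\boldsymbol m\cdot\boldsymbol\omega)\big(w(\tau,\boldsymbol m)c(\tau,\boldsymbol n-\boldsymbol m)+h(\tau,\boldsymbol m)w(\tau,\boldsymbol n-\boldsymbol m)\big){\rm d}\tau.
\end{align*}
This is a Volterra-type equation that is \emph{linear} in $w$, and the hypotheses give the seed bound $|w(t,\boldsymbol n)|\le 2C_{1}\exp(-\rho|\boldsymbol n|)$, which will play the role that the uniform bound $|d_{k}|\le 2B$ played in Section \ref{sec:2}.

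Writing the equation as $w=\mathcal{L}w$, where $\mathcal{L}$ is the operator on the right-hand side, linearity gives $w=\mathcal{L}^{k}w$ for every $k\ge1$. Estimating the kernel by $\tfrac{1}{\sqrt{1+(\boldsymbol n\cdot\boldsymbol\omega)^{2}}}\le1$, $|\exp(\cdot)-\exp(\cdot)|\le2$ and $|{\rm i}\boldsymbol m\cdot\boldsymbol\omega|\le|\boldsymbol\omega||\boldsymbol m|$, and invoking the a priori decay $|c|,|h|\le C_{1}\exp(-\rho|\boldsymbol n|)$ together with the seed bound on $w$, I would prove by (strong) induction on $k$ the exact analogue of Lemma \ref{L7}:
\begin{align*}
|w(t,\boldsymbol n)|\le\frac{2^{2k+1}C_{1}^{\,k+1}(|\boldsymbol\omega|t)^{k}}{k!}\sum_{\substack{\boldsymbol m=(\boldsymbol m_{1},\ldots,\boldsymbol m_{k+1})\in\mathbb{Z}^{(k+1)\nu}\\ \sum_{j}\boldsymbol m_{j}=\boldsymbol n}}\sum_{\boldsymbol\alpha\in\mathbb{B}^{(k)}}\prod_{j}|\boldsymbol m_{j}|^{\alpha_{j}}\exp\left(-\rho|\boldsymbol m_{j}|\right).
\end{align*}
The inductive step is word-for-word that of Lemma \ref{L7}: at each application of $\mathcal{L}$ one of the two $w$-factors is expanded by the inductive hypothesis while the accompanying $c$- or $h$-factor is controlled by its exponential bound, and the single nested time integral supplies the factor $t^{k}/k!$; the weight set $\mathbb{B}^{(k)}$ records how the $k$ frequency factors $|\boldsymbol m|$ are distributed among the $k+1$ summation indices.

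It remains to collapse the combinatorial sum exactly as in Corollaries \ref{C3}--\ref{C4}. Splitting $\exp(-\rho|\boldsymbol m_{j}|)=\exp(-\tfrac{\rho}{2}|\boldsymbol m_{j}|)\exp(-\tfrac{\rho}{2}|\boldsymbol m_{j}|)$ and using $\sum_{j}|\boldsymbol m_{j}|\ge|\boldsymbol n|$ to extract a fixed fraction $\exp(-\tfrac{\rho}{2}|\boldsymbol n|)$ of the decay, I absorb each polynomial weight $|\boldsymbol m_{j}|^{\alpha_{j}}$ into the remaining half of the exponential precisely as in the proof of Lemma \ref{lem2.10}, which bounds the frequency sum by $(\mathrm{const}/\rho)^{(k+1)\nu}\sum_{\boldsymbol\alpha\in\mathbb{B}^{(k)}}\prod_{j}\alpha_{j}!$. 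Then Lemma \ref{L9} with $N=k+1$, $l=k$ gives $\sum_{\boldsymbol\alpha\in\mathbb{B}^{(k)}}\prod_{j}\alpha_{j}!\le(2(k+1))^{k}$, and Stirling's formula turns $(2(k+1))^{k}/k!$ into a bounded geometric factor. Tracking the numerical constants exactly as in Corollary \ref{C4} produces the value of $t_{1}$ in the statement and the final estimate
\begin{align*}
|w(t,\boldsymbol n)|\le 2C_{1}\left(C_{1}\,2^{\nu+1}(288)^{\nu}\rho^{-\nu}|\boldsymbol\omega|\,t\right)^{k}\exp\left(-\frac{\rho}{2}|\boldsymbol n|\right).
\end{align*}
For $0<t<t_{1}=\min\big(t_{0},\tfrac{\rho^{\nu}}{C_{1}2^{\nu+1}(288)^{\nu}|\boldsymbol\omega|}\big)$ the base of the $k$-th power is strictly less than $1$, so letting $k\to\infty$ forces $w(t,\boldsymbol n)=0$; continuity of $w$ in $t$ then extends the identity to $t=t_{1}$. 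Hence $c(t,\boldsymbol n)=h(t,\boldsymbol n)$ for all $\boldsymbol n\in\mathbb{Z}^{\nu}$ and $0<t\le t_{1}$.

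The one genuine obstacle is the loss of a frequency factor at every application of the nonlinearity: in the term $({\rm i}\boldsymbol m\cdot\boldsymbol\omega)\,w(\tau,\boldsymbol m)\,c(\tau,\boldsymbol n-\boldsymbol m)$ the weight $|\boldsymbol m|$ lands on the index of $w$ itself, so no fixed exponentially weighted norm is left invariant by $\mathcal{L}$ and a direct Gronwall argument cannot close. This loss is compensated solely by the time integration, which is the whole point of the combinatorial apparatus: the factorials $\prod_{j}\alpha_{j}!$ generated when the polynomial weights are absorbed into the decay are dominated by the $1/k!$ coming from the $k$ nested integrals (Lemma \ref{L9} and Stirling). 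As throughout Section \ref{sec:2}, no small-divisor difficulty appears and no Diophantine condition on $\boldsymbol\omega$ is required.
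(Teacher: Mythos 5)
Your proposal is correct and follows essentially the same route as the paper's own proof: the bilinear splitting $c\,c-h\,h=w\,c+h\,w$, an induction yielding $|w(t,\boldsymbol n)|\le \frac{\mathrm{const}^{k+1}(|\boldsymbol\omega|t)^{k}}{k!}\sum_{\sum_{j}\boldsymbol m_{j}=\boldsymbol n}\sum_{\boldsymbol\alpha\in\mathbb{B}^{(k)}}\prod_{j}|\boldsymbol m_{j}|^{\alpha_{j}}\exp(-\rho|\boldsymbol m_{j}|)$ (the paper's Lemma \ref{L12}), the collapse to factorial sums via the machinery of Lemma \ref{lem2.10} (the paper's Corollary \ref{C5}), and finally Lemma \ref{L9} with Stirling's formula to force $w\equiv 0$ for $t\le t_{1}$. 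Your reformulation of the induction as iterating a linear Volterra operator ($w=\mathcal{L}^{k}w$), your use of the unsymmetrized kernel $({\rm i}\boldsymbol m\cdot\boldsymbol\omega)$ where the paper first symmetrizes to $\tfrac{{\rm i}\boldsymbol n\cdot\boldsymbol\omega}{2\sqrt{1+(\boldsymbol n\cdot\boldsymbol\omega)^{2}}}$, and your slightly larger constants ($2^{2k+1}$ in place of the paper's $2$) are only cosmetic differences.
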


\begin{proof}
The proof will be  divided into the following three steps.

\textbf{Step 1:}\quad Let us consider the upper bounds on $|h(t,\boldsymbol n)-c(t,\boldsymbol n)|$ with respect to the variables $\boldsymbol m_{j}$.
\begin{lemm}\label{L12}
Let $c(t,\boldsymbol n),h(t,\boldsymbol n)$ be as seen in Corollary \ref{C6}. There is a positive constant $C_1$ such that  for $k=1,2,\cdots$,
\begin{align}\label{E52}
|h(t,\boldsymbol n)-c(t,\boldsymbol n)|\leq\frac{2(C_{1})^{k+1}(|\boldsymbol\omega|t)^{k}}{k!}\sum\limits_{\stackrel{\boldsymbol m=(\boldsymbol m_{1},\cdots,\boldsymbol m_{k+1})\in\mathbb{Z}^{(k+1)\nu}}{\sum_{j}\boldsymbol m_{j}=\boldsymbol n}}
\sum\limits_{\boldsymbol\alpha\in\mathbb{B}^{(k)}}\prod\limits_{j}|\boldsymbol m_{j}|^{\alpha_{j}}\exp(-\rho|\boldsymbol m_{j}|).
\end{align}
\begin{proof}
Obviously, formulae \eqref{E50}--\eqref{E51} can be rewritten as
\begin{align}\label{E53}
c(t,\boldsymbol n)=&\frac{1}{2}c(\boldsymbol n)\left(\exp\left( {\rm i}t\lambda\right)+\exp \left(-{\rm i}t\lambda\right)\right)
-\frac{{\rm i}}{2\lambda}c'(\boldsymbol n)\frac{-{\rm i}}{\lambda}\left(\exp\left( {\rm i}t\lambda\right)-\exp\left(-{\rm i}t\lambda\right)\right)\nonumber\\
&-\frac{{\rm i}\boldsymbol n\cdot\boldsymbol \omega}{2\sqrt{1+(\boldsymbol n\cdot\boldsymbol \omega)^{2}}}\int_{0}^{t}\left(\exp\left({\rm i}(\tau-t)\lambda\right)
-\exp\left({\rm i}(t-\tau)\lambda\right)\right)\nonumber\\
&\quad\quad\quad\quad\quad\quad\quad\quad\quad\quad\times\sum_{\stackrel{\boldsymbol m_{1},\boldsymbol m_{2}\in \mathbb{Z}^{\nu}}{\boldsymbol m_{1}+\boldsymbol m_{2}=\boldsymbol n}}c(\tau,\boldsymbol m_{1})c(\tau,\boldsymbol m_{2}){\rm d}\tau,\quad\boldsymbol n\in\mathbb{Z}^{\nu},\\
h(t,\boldsymbol n)=&\frac{1}{2}h(\boldsymbol n)\left(\exp\left( {\rm i}t\lambda\right)+\exp \left(-{\rm i}t\lambda\right)\right)
-\frac{{\rm i}}{2\lambda}h'(\boldsymbol n)\frac{-{\rm i}}{\lambda}\left(\exp\left( {\rm i}t\lambda\right)-\exp\left(-{\rm i}t\lambda\right)\right)\nonumber\\
&-\frac{\rm i\boldsymbol n\cdot\boldsymbol \omega}{2\sqrt{1+(\boldsymbol n\cdot\boldsymbol \omega)^{2}}}\int_{0}^{t}\left(\exp\left({\rm i}(\tau-t)\lambda\right)
-\exp\left({\rm i}(t-\tau)\lambda\right)\right)\nonumber\\
&\quad\quad\quad\quad\quad\quad\quad\quad\quad\quad\times\sum_{\stackrel{\boldsymbol m_{1},\boldsymbol m_{2}\in \mathbb{Z}^{\nu}}{\boldsymbol m_{1}+\boldsymbol m_{2}=\boldsymbol n}}h(\tau,\boldsymbol m_{1})h(\tau,\boldsymbol m_{2}){\rm d}\tau,\quad\boldsymbol n\in\mathbb{Z}^{\nu}.\label{E54}
\end{align}
The difference between \eqref{E53} and \eqref{E54} is bounded from above by
\begin{align*}
|h(t,\boldsymbol n)-c(t,\boldsymbol n)|&\leq{|\boldsymbol n||\boldsymbol\omega|}\sum_{\stackrel{\boldsymbol m_{1},\boldsymbol m_{2}\in \mathbb{Z}^{\nu}}{\boldsymbol m_{1}+\boldsymbol m_{2}=\boldsymbol n}}\int_{0}^{t}|h(\tau,\boldsymbol m_{1})h(\tau,\boldsymbol m_{2})-c(\tau,\boldsymbol m_{1})c(\tau,\boldsymbol m_{2})|{\rm d}\tau\nonumber\\
&\leq|\boldsymbol n||\boldsymbol\omega|\sum_{\stackrel{\boldsymbol m_{1},\boldsymbol m_{2}\in \mathbb{Z}^{\nu}}{\boldsymbol m_{1}+\boldsymbol m_{2}=\boldsymbol n}}\int_{0}^{t}(|h(\tau,\boldsymbol m_{1})||h(\tau,\boldsymbol m_{2})|+|c(\tau,\boldsymbol m_{1})||c(\tau,\boldsymbol m_{2})|){\rm d}\tau\nonumber\\
&\leq|\boldsymbol n||\boldsymbol\omega|\sum_{\stackrel{\boldsymbol m_{1},\boldsymbol m_{2}\in \mathbb{Z}^{\nu}}{\boldsymbol m_{1}+\boldsymbol m_{2}=\boldsymbol n}}\int_{0}^{t}((2C_{1})^{2}\exp(-\rho(|\boldsymbol m_{1}|+|\boldsymbol m_{2}|))){\rm d}\tau\nonumber\\
&\leq 2(C_{1})^{2}t|\boldsymbol\omega|\sum_{\stackrel{\boldsymbol m_{1},\boldsymbol m_{2}\in \mathbb{Z}^{\nu}}{\boldsymbol m_{1}+\boldsymbol m_{2}=\boldsymbol n}}|\sum_{j}m_{j}|\exp(-\rho(|\boldsymbol m_{1}|+|\boldsymbol m_{2}|)).
\end{align*}
Hence \eqref{E52} holds for $k=1$.

We prove \eqref{E52} by induction. Suppose that \eqref{E52} could hold for $k-1$. Observe that
\begin{align*}
|h(t,\boldsymbol n)-c(t,\boldsymbol n)|&\leq{|\boldsymbol n||\boldsymbol\omega|}\sum_{\stackrel{\boldsymbol m_{1},\boldsymbol m_{2}\in \mathbb{Z}^{\nu}}{\boldsymbol m_{1}+\boldsymbol m_{2}=\boldsymbol n}}\int_{0}^{t}|h(\tau,\boldsymbol m_{1})h(\tau,\boldsymbol m_{2})-c(\tau,\boldsymbol m_{1})c(\tau,\boldsymbol m_{2})|{\rm d}\tau\\
&\leq|\boldsymbol n||\boldsymbol\omega|\sum_{\stackrel{\boldsymbol m_{1},\boldsymbol m_{2}\in \mathbb{Z}^{\nu}}{\boldsymbol m_{1}+\boldsymbol m_{2}=\boldsymbol n}}\int_{0}^{t}(|h(\tau,\boldsymbol m_{1})-c(\tau,\boldsymbol m_{1})||h(\tau,\boldsymbol m_{2})|\\
&\qquad\qquad\qquad\qquad\qquad+|h(\tau,\boldsymbol m_{2})-c(\tau,\boldsymbol m_{2})||c(\tau,\boldsymbol m_{1})|){\rm d}\tau.\\
\end{align*}
By using the inductive assumption, we have
\begin{align*}
&|\boldsymbol n||\boldsymbol\omega|\int_{0}^{t}\sum_{\stackrel{\boldsymbol n_{1},\boldsymbol n_{2}\in \mathbb{Z}^{\nu}}{\boldsymbol n_{1}+\boldsymbol n_{2}=\boldsymbol n}}|h(\tau,\boldsymbol n_{1})-c(\tau,\boldsymbol n_{1})||h(\tau,\boldsymbol n_{2})|{\rm d}\tau\\
&\leq|\boldsymbol\omega|\int_{0}^{t}\sum_{\stackrel{\boldsymbol n_{1},\boldsymbol n_{2}\in \mathbb{Z}^{\nu}}{\boldsymbol n_{1}+\boldsymbol n_{2}=\boldsymbol n}}\frac{(C_{1})^{k}(|\boldsymbol\omega|\tau)^{k-1}}{(k-1)!}\sum\limits_{\stackrel{\boldsymbol m=(\boldsymbol m_{1},\cdots,\boldsymbol m_{k})\in\mathbb{Z}^{k\nu}}{\sum_{j}\boldsymbol m_{j}=\boldsymbol n_{1}}}|(\sum_{j}\boldsymbol m_{j})+\boldsymbol n_{2}|{\rm d}\tau\\
&\qquad\qquad\times\sum\limits_{\boldsymbol\alpha\in\mathbb{B}^{(k-1)}}\prod\limits_{j}|\boldsymbol m_{j}|^{\alpha_{j}}\exp(-\rho|\boldsymbol m_{j}|)(C_{1}\exp(-\rho|\boldsymbol n_{2}|))\\
&\leq\frac{(C_{1})^{k+1}(|\boldsymbol\omega|t)^{k}}{k!}\sum\limits_{\stackrel{\boldsymbol m=(\boldsymbol m_{1},\cdots,\boldsymbol m_{k+1})\in\mathbb{Z}^{(k+1)\nu}}{\sum_{j}\boldsymbol m_{j}=\boldsymbol n}}\sum\limits_{\boldsymbol\alpha\in\mathbb{B}^{(k)}}\prod\limits_{j}|\boldsymbol m_{j}|^{\alpha_{j}}\exp(-\rho|\boldsymbol m_{j}|).
\end{align*}
Similarly, one has
\begin{align*}
&|\boldsymbol n||\boldsymbol\omega|\int_{0}^{t}\sum_{\stackrel{\boldsymbol n_{1},\boldsymbol n_{2}\in \mathbb{Z}^{\nu}}{\boldsymbol n_{1}+\boldsymbol n_{2}=\boldsymbol n}}|h(\tau,\boldsymbol n_{2})-c(\tau,\boldsymbol n_{2})||c(\tau,\boldsymbol n_{2})|{\rm d}\tau\\
&\leq\frac{(C_{1})^{k+1}(|\boldsymbol\omega|t)^{k}}{k!}\sum\limits_{\stackrel{\boldsymbol m=(\boldsymbol m_{1},\cdots,\boldsymbol m_{k+1})\in\mathbb{Z}^{(k+1)\nu}}{\sum_{j}\boldsymbol m_{j}=\boldsymbol n}}\sum\limits_{\boldsymbol\alpha\in\mathbb{B}^{(k)}}\prod\limits_{j}|\boldsymbol m_{j}|^{\alpha_{j}}\exp(-\rho|\boldsymbol m_{j}|).
\end{align*}
Thus we can get \eqref{E52}. Consequently, we arrive at the conclusion of the lemma.
\end{proof}
\end{lemm}

\textbf{Step 2:}\quad Our next goal is to give an estimation of $|h(t,\boldsymbol n)-c(t,\boldsymbol n)|$ with respect to the variables $\alpha_{j}$.
\begin{coro}\label{C5}
Let $c(t,\boldsymbol n),h(t,\boldsymbol n)$ be given in Corollary \ref{C6}. There exists a positive constant $C_{1}$ such that for $k=1,2,\cdots$,
\begin{align*}
|h(t,\boldsymbol n)-c(t,\boldsymbol n)|&\leq\frac{2(C_{1})^{k+1}(288)^{k\nu}(\rho^{-\nu}|\boldsymbol\omega|t)^{k}}{k!}\sum\limits_{\boldsymbol\alpha\in\mathbb{B}^{(k)}}\prod\limits_{j}\alpha_{j}!.
\end{align*}
\begin{proof}
It follows from Lemma \ref{L12} that
\begin{align*}
|h(t,\boldsymbol n)-c(t,\boldsymbol n)|
&\leq\frac{2(C_{1})^{k+1}(|\boldsymbol\omega|t)^{k}}{k!}\sum\limits_{\stackrel{\boldsymbol m=(\boldsymbol m_{1},\ldots, \boldsymbol m_{k+1})\in\mathbb{Z}^{(k+1)\nu}}{\sum\limits_{j}\boldsymbol m_{j}=\boldsymbol n}}
\sum\limits_{\boldsymbol\alpha\in\mathbb{B}^{(k)}}\prod\limits_{j}|\boldsymbol m_{j}|^{\alpha_{j}}\exp(-\rho|\boldsymbol m_{j}|)\\
&\leq\frac{2(C_{1})^{k+1}(|\boldsymbol\omega|t)^{k}}{k!}\sum\limits_{\stackrel{\boldsymbol m=(\boldsymbol m_{1},\ldots,\boldsymbol m_{k+1})\in\mathbb{Z}^{(k+1)\nu}}{\sum\limits_{j}\boldsymbol m_{j}=\boldsymbol n}}\prod\limits_{j}\exp(-\frac{\rho}{2}|\boldsymbol m_{j}|)\\
&\qquad\qquad\qquad\qquad\qquad\qquad\qquad\qquad\times\sum\limits_{\boldsymbol\alpha\in\mathbb{B}^{(k)}}\prod\limits_{j}|\boldsymbol m_{j}|^{\alpha_{j}}\exp(-\frac{\rho}{2}|\boldsymbol m_{j}|).
\end{align*}
Combining this with the proving procedure of Lemma \ref{lem2.10}, we obtain the conclusion of the lemma.
\end{proof}
\end{coro}

\textbf{Step 3:}\quad Finally, combining Corollary \ref{C5} with formula \eqref{E37} yields that
\begin{align*}
|h(t,\boldsymbol n)-c(t,\boldsymbol n)|\leq\frac{2(C_{1})^{k+1}(288^{\nu}\rho^{-\nu}|\boldsymbol\omega|t)^{k}}{k!}(2N)^{k}
\end{align*}
with $N=k+1$. Due to Stirling's formulae
\begin{align*}
k!\gtrsim k^{k}e^{-k},\quad\quad(k!)^{-1}(2N)^{k} \lesssim(2e)^{k},
\end{align*}
if $0<t\leq\min(t_{0},\frac{\rho^{\nu}}{C_{1}2^{\nu+1}(288)^{\nu}|\boldsymbol\omega|})$, then
\begin{align*}
\lim\limits_{k\rightarrow\infty}\frac{2(C_{1})^{k+1}(288^{\nu}\rho^{-\nu}|\boldsymbol\omega|t)^{k}}{k!}(2N)^{k}=0.
\end{align*}
This implies that $c(t,\boldsymbol n)=h(t,\boldsymbol n)$ for all $\boldsymbol n\in\mathbb{Z}^{\nu}$ and $0<t\leq t_{1}$.

Hence we complete the proof of Corollary \ref{C6}.
\end{proof}

\section{Proof of the main results}\label{sec:4}
The remainder of this paper is to give the proof of the main results.
\begin{proof}[Proof of Theorem \ref{th1}]

We first show the existence of local solutions for the ``good" Boussinesq equation \eqref{ne} subject to quasi-periodic initial data \eqref{seq}--\eqref{seq2}.

\underline{\emph{Existence}}.  It follows from Corollary \ref{C4} that for all $0\leq t\leq\frac{\kappa^{\nu}}{32B(192)^{\nu}|\boldsymbol\omega|}$ and $\boldsymbol n\in\mathbb{Z}^{\nu}$, the following limit
\begin{align*}
d^{(0)}(t,\boldsymbol n)=\lim\limits_{k\rightarrow\infty}d_{k}(t,\boldsymbol n)=\lim\limits_{k\rightarrow\infty}c_{k-1}(t,\boldsymbol n)
\end{align*}
exists with
\begin{align*}
|d^{(0)}(t,\boldsymbol n)-d_{k-1}(t,\boldsymbol n)|\leq B^{k+1}(8e)^k(96)^{k\nu}(\kappa^{-\nu}|\boldsymbol\omega|t)^{k}\exp(-\frac{\kappa}{8}|\boldsymbol n|).
\end{align*}
Moreover, using Corollary \ref{C2.2} yields that
\begin{align*}
|d^{(0)}(t,\boldsymbol n)|\leq2B\exp(-\frac{\kappa|\boldsymbol n|}{4}).
\end{align*}
Based on the above estimations, $d^{(0)}(t,\boldsymbol n)$ satisfies the following system coming from \eqref{Ec}
\begin{align*}
d^{(0)}(t,\boldsymbol n)=&\frac{1}{2}c(\boldsymbol n)\left(\exp\left({\rm i}t\lambda\right)+\exp \left(-{\rm i}t\lambda\right)\right)
 -\frac{{\rm i}}{2\lambda}c'(\boldsymbol n)\left(\exp\left( {\rm i}t\lambda\right)-\exp\left(-{\rm i}t\lambda\right)\right)\nonumber\\
&-\frac{{\rm i}\boldsymbol n\cdot\boldsymbol \omega}{2\sqrt{1+(\boldsymbol n\cdot\boldsymbol \omega)^{2}}}\int_{0}^{t}\left(\exp\left({\rm i}(\tau-t)\lambda\right)
-\exp\left({\rm i}(t-\tau)\lambda\right)\right)\nonumber\\
&\quad\quad\quad\quad\quad\quad\quad\quad\quad\quad\times\sum_{\stackrel{{\boldsymbol m_{1},\boldsymbol m_{2}\in \mathbb{Z}^{\nu}}}{{\boldsymbol m_{1}+\boldsymbol m_{2}=\boldsymbol n}}}d^{(0)}(\tau,\boldsymbol m_{1})d^{(0)}(\tau,\boldsymbol m_{2}){\rm d}\tau.
\end{align*}
Due to Lemma \ref{cgl}, the function
\begin{align*}
u(t,x)=\sum_{\boldsymbol n\in
\mathbb Z^{\nu}}d^{(0)}(t,\boldsymbol n)\exp(\mathrm{i}x\boldsymbol n\cdot\boldsymbol\omega)
\end{align*}
satisfies the ``good"  Boussinesq equation \eqref{ne} with quasi-periodic initial conditions \eqref{seq}--\eqref{seq2}.

It remains to prove uniqueness of local solutions for the ``good" Boussinesq equation \eqref{ne} subject to quasi-periodic initial data \eqref{seq}--\eqref{seq2}.

\underline{\emph{Uniqueness}}.  Let $u,v$ be two local solutions for the ``good" Boussinesq equation \eqref{ne} subject to quasi-periodic initial data \eqref{seq}--\eqref{seq2}. Namely, both $u$ and $v$ satisfy that for $0\leq t\leq t_{0}$, $x\in\mathbb{R}$,
\begin{align*}
\partial_{t}^{2}u+\partial_{x}^{4}u-\partial_{x}^{2}u-\partial_{x}^{2}(u^{2})=0,\quad
\partial_{t}^{2}v+\partial_{x}^{4}v-\partial_{x}^{2}v-\partial_{x}^{2}(v^{2})=0
\end{align*}
with
\begin{align*}
u(0,x)=v(0,x), \quad \partial_{t}u(0,x)=\partial_{t}v(0,x),\quad \forall x\in\mathbb{R}.
\end{align*}
Moreover, $u,v$ have the following expansions
\begin{align*}
u(t,x)=\sum_{\boldsymbol n\in Z^{\nu}}c(t,\boldsymbol n)\exp(\mathrm{i}x\boldsymbol n\cdot\boldsymbol \omega),\quad  v(t,x)=\sum_{\boldsymbol n\in Z^{\nu}}h(t,\boldsymbol n)\exp(\mathrm{i}x\boldsymbol n\cdot\boldsymbol \omega),
\end{align*}
where the Fourier coefficients $|c(t,\boldsymbol n)|,~|h(t,\boldsymbol n)|$ satisfy that for some constants $C_{1}>0,\rho>0$,
\begin{align*}
|c(t,\boldsymbol n)|\leq C_{1}\exp(-\rho|\boldsymbol n|),\quad|h(t,\boldsymbol n)| \leq C_{1}\exp(-\rho|\boldsymbol n|),\quad\boldsymbol n\in\mathbb{Z}^{\nu}.
\end{align*}
From Lemma \ref{cgl}, we have equations \eqref{E50}--\eqref{E51}. Then $c(t,\boldsymbol n)$ and $h(t,\boldsymbol n)$ obey the conditions of Corollary \ref{C6}. In view of Corollary \ref{C6}, one has $u(t,x)=v(t,x)$ for $0<t\leq \min(t_{0},\frac{\rho^{\nu}}{C_{1}2^{\nu+1}(288)^{\nu}|\boldsymbol\omega|})$ and $x\in\mathbb{R}$.

Hence we have completed the proof of Theorem \ref{th1}.
\end{proof}
\end{document}